\newtheorem{Def}{Definition}[section]
\newtheorem{Prop}[Def]{Proposition}
\newtheorem{Theo}[Def]{Theorem}
\newtheorem{Lem}[Def]{Lemma}
\newtheorem{Koro}[Def]{Corollary}
\DeclareMathOperator{\Coker}{{Coker}}
\DeclareMathOperator{\Ker}{{Ker}}
\DeclareMathOperator{\rad}{{rad}}
\DeclareMathOperator{\add}{{add}}
\DeclareMathOperator{\thick}{{\sf thick}}
\newcommand{\D}[1]{\mathscr{D}(#1)}
\newcommand{\Db}[1]{ \mathscr{D}^{\rm b}(#1)}
\newcommand{\C}[1]{\mathscr{C}(#1)}
\newcommand{\K}[1]{\mathscr{K}(#1)}
\newcommand{\Kb}[1]{ \mathscr{K}^{\rm b}(#1)}
\newcommand{\modcat}[1]{#1\mbox{{\sf -mod}}}
\newcommand{\Modcat}[1]{{#1{\sf \mbox{-}Mod}}}
\newcommand{\pmodcat}[1]{#1\mbox{{\sf -proj}}}
\newcommand{\imodcat}[1]{#1\mbox{{\sf -inj}}}
\newcommand{\stp}[1]{{#1}\mbox{\sf-stp}}
\def\nuinv#1{\mathscr{X}_{#1}}
\def\Ndomdim{\nu\mbox{-}\domdim}
\newcommand{\Hom}{{\rm Hom }}
\newcommand{\RHom}{{\rm\bf R}{\rm Hom}}
\newcommand{\End}{{\rm End}}
\newcommand{\Ext}{{\rm Ext}}
\newcommand{\opp}{^{\rm op}}
\newcommand{\otimesL}{\otimes^{\rm\bf L}}
\DeclareMathOperator{\domdim}{domdim}
\DeclareMathOperator{\projdim}{pd}
\DeclareMathOperator{\gldim}{gldim}
\DeclareMathOperator{\DD}{\mathrm{D}}  
\newcommand{\cpx}[1]{#1^{\bullet}}
\renewcommand{\H}{\mathcal{H}}
\renewcommand{\leq}{\leqslant}
\renewcommand{\geq}{\geqslant}
\newcommand{\lra}{\longrightarrow}
\newcommand{\ra}{\rightarrow}
\newcommand{\lraf}[1]{\stackrel{#1}{\lra}}
\newcommand{\raf}[1]{\stackrel{#1}{\ra}}
\title[Derived equivalences, restriction and invariance]{Derived equivalences, restriction to self-injective subalgebras and
  invariance of homological dimensions}
\author{Ming Fang, Wei Hu$^*$ and Steffen Koenig}
\date{}
\begin{document}
\maketitle

\setcounter{footnote}{-1} \footnote{ $^*$ Corresponding author.}



\begin{abstract}

Derived equivalences between finite dimensional algebras do, in general,
  not pass to centraliser (or other) subalgebras, nor do they preserve
  homological invariants of the algebras, such as global or dominant
  dimension. We show that, however, they do so for large classes of
  algebras described in this article.

  Algebras $A$ of $\nu$-dominant dimension at least one have unique largest
  non-trivial self-injective centraliser subalgebras $H_A$.
  A derived restriction theorem
  is proved: A derived equivalence between $A$ and $B$ implies a derived
  equivalence between $H_A$ and $H_B$.

   Two methods are developed
   to show that global
  and dominant dimension are preserved by derived equivalences between
  algebras of $\nu$-dominant dimension at least one with anti-automorphisms
  preserving simples, and also between almost self-injective algebras.
  One method is based on identifying particular derived equivalences
  preserving homological dimensions, while
  the other method identifies homological dimensions inside certain derived
  categories.

   In particular, derived
  equivalent cellular algebras have the same global dimension.
  As an application, the global and dominant
  dimensions of blocks of quantised Schur algebras
  with $n \geq r$ are completely determined.

 \medskip
     \noindent{\small\bf Keywords} \ \
     Derived equivalence. Dominant dimension. Global dimension. Schur algebras.
\end{abstract}

\tableofcontents

\section{Introduction}

Derived equivalences between finite dimensional algebras are known to be
fundamental in representation theory and applications. Unfortunately, still
very few positive results are known about the structure of derived equivalences and
about homological invariants. For instance, it is not known when (a)
a derived equivalence between algebras $A$ and
$B$ induces derived equivalences between certain centraliser subalgebras $eAe$ and
$fBf$, or in case of group algebras between subgroup algebras. It is also not
known when (b) the existence of a derived equivalence implies that $A$ and
$B$ share homological invariants such as global or dominant dimension.
For known classes of derived equivalences, both questions are known to have
dauntingly negative answers.


The aim of this article is to identify large classes of algebras where both
problems do have positive solutions. A starting point, and some hope, may be
provided by the class of self-injective algebras, which have both global and
dominant dimension infinite, except in the semisimple case. Under some mild
assumption, derived equivalences are known to preserve the property of being
self-injective, hence also the values of these two homological dimensions.
In full generality, derived equivalences between self-injective algebras $A$ and $B$
induce stable equivalences of Morita type and thus there are equalities
between global and also between dominant dimensions of $A$ and $B$. This
motivates considering algebras that are closely related to self-injective algebras
and considering derived equivalences inducing stable equivalences of Morita type,
 leading to the following more precise versions of (a) and (b):

\medskip
 {\em
 {\rm (A)} When does a derived equivalence between algebras $A$ and $B$
 induce a derived equivalence between (largest, in some sense)
 self-injective centraliser subalgebras
$eAe$ and $fBf$?

{\rm  (B)} For which classes of algebras are
global and dominant dimension invariant under all or certain derived
equivalences?}

\medskip
The key concept to address both questions and to identify suitable and interesting classes of algebras is $\nu$-dominant
dimension (to be defined in \ref{defn:strong dominant dimension}),
where $\nu$ is the Nakayama functor sending projective to
injective modules over an algebra. Concerning (A), an assumption is
needed to identify unique and non-zero associated self-injective centraliser
subalgebras (first used by Martinez-Villa \cite{MV1}) of $A$
and $B$, respectively, which then can be investigated for derived equivalence.
In general, it may happen that an algebra $A$ having zero as associated
self-injective algebra is derived equivalent to an algebra $B$
having a non-zero associated self-injective algebra; see
\cite[Section 5]{Rickardderivedstable} for an example.

The condition that both algebras have faithful strongly projective-injective
modules allows to identify non-trivial associated self-injective
centraliser subalgebras (Lemma
\ref{lem:two dominant dimensions coincide} and Definition
\ref{defn:associatedselfinjectivealgebra}), and it is strong enough to solve
problem (A):

\medskip
{\bf Derived Restriction Theorem (Corollary
  \ref{cor:restriction theorem for Morita algebras})}:
{\em Let $A$ and $B$ be finite dimensional algebras
of $\nu$-dominant dimension at least one, and let $H_A=eAe$ and $H_B=fBf$ be
their associated self-injective centraliser subalgebras. If $A$
and $B$ are derived equivalent, then also $H_A$ and $H_B$ are derived
equivalent.}

\medskip
The proof is based on a stronger result (Theorem
\ref{thm:restriction theorem}), which shows that the given derived
equivalence between $A$ and $B$ restricts to certain subcategories
that are shown to determine the derived categories of $H_A$ and $H_B$.

The class of algebras of $\nu$-dominant dimension
at least one contains all self-injective
algebras, but also the Morita algebras introduced in \cite{KY13}, which are
characterised in (Theorem \ref{prop:characterise morita algebras}) as the algebras having $\nu$-dominant dimension at least two; their $\nu$-dominant dimension coincides with the classical dominant dimension. Morita algebras in turn contain gendo-symmetric algebras and hence several classes
of algebras of interest in algebraic Lie theory such as classical or
quantised Schur algebras and blocks of the BGG-category $\mathcal O$ of
semisimple complex Lie algebras; these algebras usually have finite global
dimension, but are related to self-injective algebras by Schur-Weyl dualities.
Special cases of the Derived Restriction Theorem state for instance: (1) Two
classical or quantised Schur algebras $S(n,r)$ (with $n \geq r$)
are derived equivalent only if the
corresponding group algebras of symmetric groups or Hecke algebras are so
(for the latter a derived equivalence classification is known by Chuang and
Rouquier \cite{CR08}). \\ (2) Auslander algebras of self-injective algebras
of finite representation type are derived equivalent if and only if the self-injective algebras are so (the latter derived equivalences are known by work of
Asashiba \cite{Asa}), and in this case the Auslander algebras moreover are
stably equivalent of Morita type (Corollary
\ref{cor:restriction theorem for Auslander algebras}).

Problem (B) does not, in general, have a positive answer for algebras of
$\nu$-dominant dimension at least one; only upper bounds for the
differences in dimensions can be given (which are valid in general, Proposition  \ref{prop:bound global dimension} and Theorem \ref{thm:bound the difference}). To identify subclasses of
algebras where problem (B) has a positive answer, two approaches are
developed here:

One approach identifies special derived equivalences, which do preserve both
global and dominant dimensions: these are the (iterated) almost $\nu$-stable
derived equivalences (introduced in  \cite{HuXi10,Hu12}). These equivalences are
known to induce stable equivalences of
Morita type. Standard equivalences between self-injective algebras are of
this form. The same is true for a larger class of algebras introduced here, the almost
self-injective algebras, which include all self-injective algebras and also some
algebras of finite global dimension, for instance Schur algebras of finite
representation type:

\medskip
{\bf First Invariance Theorem (Corollary \ref{cor:class almost self-injective algebras}):}
{\em Derived equivalences between almost
self-injective algebras preserve both global and dominant dimension.}

\medskip
The second approach concentrates on directly identifying global and dominant
dimension inside some derived category; in the case of dominant dimension, the
associated self-injective centraliser subalgebra occurring in the Derived
Restriction Theorem is used. This approach works (under the assumption of
having dominant dimension at least one) for all split algebras
(e.g.,
algebras over an algebraically closed field) having an
anti-automorphism (for instance, a duality) preserving simples:

\medskip
{\bf Second Invariance Theorem
(Theorem \ref{thm:class with anti-automorphisms}):}
{\em Let $A$ and $B$ be two derived equivalent split algebras with
anti-auto\-morphisms fixing simples. Then they have the same
global dimension. If in addition both $A$ and $B$ have dominant dimension at
least one, then they also
have the same dominant dimension.}

\medskip
Dualities, i.e. involutory anti-automorphisms fixing simples,
exist for instance for all cellular algebras.
The second invariance theorem covers in particular classical and quantised
Schur algebras and even their blocks. In fact, the invariance property
is strong enough
(Theorems \ref{thm:global dimension of blocks of Schur algebras}
and \ref{thm:dominant dimension of blocks of Schur algebras}) to
determine these dimensions by explicit combinatorial formulae in terms of
weights and (quantum) characteristics,
for all blocks of such Schur algebras,
using the derived equivalences constructed by Chuang and Rouquier.

The main results of this article are motivated by various results in the
literature, which are extended and applied here:

The idea to use Schur-Weyl dualities and Schur functors to compare
homological data of self-injective algebras such as group algebras of
symmetric groups and of their Schur algebras (quasi-hereditary covers), of
finite global dimension, has been developed in \cite{FK11a}. There it has been
demonstrated that dominant dimension is not only crucial for existence of
Schur-Weyl duality, \cite{KSX}, but also for the quality of the Schur functor
in preserving homological data, although typically on one side of Schur-Weyl
duality there is an algebra of infinite global and dominant dimension and on
the other side there is an algebra with finite such dimensions. These
are derived equivalent only in degenerate (semisimple) cases. The same
approach has been demonstrated to work for Hochschild cohomology in
\cite{FM}, where also a first instance of the derived restriction theorem has
appeared. The concept of (iterated) almost $\nu$-stable derived equivalence
and its useful properties have been developed in \cite{Hu12,HuXi10}, where the
focus has been on the resulting stable equivalences of Morita type (which
imply invariance of global and dominant dimension).

Chuang and Rouquier's derived equivalence classification of blocks of
symmetric groups and of some related algebras provides an important supply of
examples. It turns out, however, that already these equivalences, more
precisely those between quantised Schur algebras, are not always iterated
$\nu$-stable, which forces us to use the second approach to derived
invariants, using anti-automorphisms fixing simples, in this case.


\section{Three homological invariants and two classes of algebras}
\label{sec:algebras}

After recalling two major homological invariants of algebras, global dimension and dominant dimension, a new variation of dominant dimension, $\nu$-dominant dimension, is introduced that will turn out to provide a crucial assumption in the main results. In the second subsection, the two main classes of algebras considered here will be discussed and related to $\nu$-dominant dimension; these
are the Morita algebras, which will get characterised in terms of
$\nu$-dominant dimension, and the new class of almost self-injective algebras.

\vspace{-.2cm}

\subsection{General conventions}
 Throughout, $k$ is an arbitrary field of any characteristic. Algebras are
{\em finite dimensional} $k$-vector spaces and, unless stated otherwise,
 modules are finitely generated left modules.
 When $A$ is an algebra, $A\opp$ denotes the opposite algebra of $A$, and
 $A^{\mathrm e}$ is the enveloping algebra $A\otimes_k A\opp$.
 Let $\Modcat{A}$ (respectively $A\modcat$) be the category of all
 (respectively all finitely generated) left $A$-modules, and
 $\pmodcat{A}$ (respectively $\imodcat{A}$)
 the full subcategory of $A\modcat$ whose objects are the
 projective (respectively injective) left $A$-modules.
 Let $\DD$ be the usual $k$-duality functor
 $\Hom_k(-,k): \modcat{A} \to A\opp\modcat$ and $\nu_{_A} = \DD\Hom_A(-,A):
 \pmodcat{A}\to \imodcat{A}$ the Nakayama functor.

 We follow the conventions from \cite{AF}. Let $\cal C$ be an
 additive category.  An object  $X$ in $\cal C$ is called
 \emph{strongly indecomposable} if $\End_{\cal C}(X)$ is a local
 ring. An object $Y$ in $\cal C$ is called \emph{basic}
 if $Y$ is a direct sum of strongly indecomposable objects of multiplicity
 one each. For an object $M$ in $\cal C$, we write $\add(M)$
 for the full subcategory of $\cal C$
 consisting of all direct summands of finite direct sum of copies of $M$.
 By $f \cdot g$ or $f g$ we denote the composition of
 morphisms $f: X\to Y$ and $g: Y\to Z$ in $\cal C$.
 A morphism $h: X\to Y$ is said to be \emph{radical} in $\cal C$,
 if for morphisms $\alpha: Z\to X$ and $\beta: Y\to Z$, the composition
 $\alpha\cdot h\cdot \beta$ never is an isomorphism.
 In contrast to the composition rule for morphisms, we write
 $\mathcal{G}\circ \mathcal{F}$ for the composition of two functors
 $\mathcal{F}: \cal C\to \cal D$ and $\mathcal{G}: \cal D
 \to \cal E$ between additive categories.



\vspace{-.2cm}

 \subsection{Global dimension and two dominant dimensions}
\label{subsec:global dimension and dominant dimension}
 Given a finite dimensional $k$-algebra $A$, there are many homological
 invariants around to measure the complexity of $A$ from different points
 of view, and global dimension is the most widely used one.
 By definition, the global dimension of $A$, denoted by $\gldim A$, is
 the smallest number $g$
 or $\infty$ such that $\Ext^i_A(M,N)=0$ for any $i>g$ and all $M,N\in \modcat{A}$.
 The following (well-known) characterisation can be found for instance in \cite[Corollary 3.8]{FK11a}.

\vspace{-.2cm}

 \begin{Lem} \label{lem:characterise global dimension}
   Let $A$ be a $k$-algebra. If $\gldim A<\infty$, then
   $\gldim A$ is the largest number $g$ such that $\Ext^g_A({_A}\DD(A),{_A}A)
   \neq 0$.
 \end{Lem}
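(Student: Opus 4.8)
The plan is to exploit the general fact that for a finite dimensional algebra $A$ of finite global dimension, the Nakayama functor $\nu_A = \DD\Hom_A(-,A)$ induces a derived equivalence between $A$ and itself (it is the Serre functor on $\Db{A}$), together with the elementary observation that $\gldim A$ can always be computed by testing $\Ext$ from projectives or into injectives. More precisely, first I would recall that $\gldim A = \max\{\,i : \Ext^i_A(S,T)\neq 0 \text{ for simple } S,T\,\}$, and that one may replace the arbitrary modules $M,N$ in the definition by $M$ projective (run a projective resolution of $M$ and shift) and $N$ injective (run an injective resolution of $N$ and shift); this reduces the computation of $\gldim A$ to the vanishing behaviour of $\Ext^i_A(P,I)$ for $P$ projective and $I$ injective. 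Since finite global dimension is assumed, all these $\Ext$-groups vanish in degrees above $g := \gldim A$, and by the definition of $g$ as a \emph{maximum}, there is at least one pair $(M,N)$ with $\Ext^g_A(M,N)\neq 0$; after the reduction, at least one pair $(P,I)$ with $P\in\pmodcat A$, $I\in\imodcat A$ has $\Ext^g_A(P,I)\neq 0$.

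Next I would upgrade this from "some projective and some injective" to "the specific module ${_A}A$ on the projective side and ${_A}\DD(A)$ on the injective side." For the projective side this is immediate: every projective is a summand of a finite direct sum of copies of ${_A}A$, so if $\Ext^g_A(P,I)\neq 0$ for some projective $P$ then $\Ext^g_A({_A}A,I)\neq 0$; dually on the injective side, every injective is a summand of a power of $\DD(A)$, so we get $\Ext^g_A({_A}A,{_A}\DD(A))\neq 0$. Conversely, $\Ext^i_A({_A}\DD(A),{_A}A)=0$ for all $i>g$ by definition of global dimension, so $g$ is indeed the largest index with $\Ext^i_A({_A}\DD(A),{_A}A)\neq 0$ \emph{provided} this group is nonzero in degree $g$. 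So the only remaining point is to pass from nonvanishing of $\Ext^g_A({_A}A,{_A}\DD(A))$ to nonvanishing of $\Ext^g_A({_A}\DD(A),{_A}A)$ — i.e. to swap the roles of ${_A}A$ and ${_A}\DD(A)$.

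This swap is exactly where the Nakayama functor enters, and it is the only genuinely non-routine step. Since $\gldim A<\infty$, the functor $\nu_A$ (equivalently $\DD(A)\otimesL_A -$) is an auto-equivalence of $\Db{A}$; it sends ${_A}A$ to ${_A}\DD(A)$ and, being an equivalence, induces isomorphisms $\Ext^i_A({_A}A,{_A}\DD(A)) = \Hom_{\Db{A}}({_A}A,{_A}\DD(A)[i]) \cong \Hom_{\Db{A}}(\nu_A({_A}A),\nu_A({_A}\DD(A))[i])$. Combining this with the Serre duality property of $\nu_A$, $\Hom_{\Db{A}}(X,Y)\cong \DD\Hom_{\Db{A}}(Y,\nu_A X)$, one rewrites $\Ext^g_A({_A}A,{_A}\DD(A))$ in terms of $\Ext^g_A({_A}\DD(A),{_A}A)$ (up to $k$-duality), so the two are nonzero simultaneously. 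Hence $\Ext^g_A({_A}\DD(A),{_A}A)\neq 0$, and together with the vanishing in higher degrees this proves that $g=\gldim A$ is the largest $g$ with $\Ext^g_A({_A}\DD(A),{_A}A)\neq 0$. The main obstacle is keeping the homological-algebra bookkeeping clean: one must be careful that all the relevant modules have finite projective \emph{and} finite injective dimension (guaranteed by $\gldim A<\infty$) so that the derived-category manipulations and the identification of $\nu_A$ with a genuine auto-equivalence are legitimate, and one must check that no degree shift is lost when reducing from arbitrary $M,N$ to ${_A}A$ and ${_A}\DD(A)$.
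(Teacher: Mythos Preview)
The paper does not give its own proof of this lemma; it merely cites \cite[Corollary 3.8]{FK11a}. So there is no ``paper's approach'' to compare against, only the question of whether your argument is correct.

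Your argument contains a genuine error in the reduction step. You claim that the computation of $\gldim A$ reduces to the vanishing behaviour of $\Ext^i_A(P,I)$ for $P$ projective and $I$ injective, and then that $\Ext^g_A({}_AA,{}_A\DD(A))\neq 0$ for $g=\gldim A$. But $\Ext^i_A(P,-)=0$ for every $i>0$ whenever $P$ is projective, so $\Ext^g_A({}_AA,{}_A\DD(A))=0$ automatically for $g>0$. The reduction you describe therefore collapses to a triviality and cannot detect global dimension. The subsequent Serre-duality step, meant to swap the arguments, is then trying to deduce nonvanishing from a group that is already zero.

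Note that the lemma has the \emph{injective} module $\DD(A)$ in the first slot and the \emph{projective} module $A$ in the second, exactly the opposite of your intermediate claim. A direct argument runs as follows: since $\gldim A=g<\infty$, one has $\Ext^{g+1}_A(-,-)=0$ and there exist $M,N$ with $\Ext^g_A(M,N)\neq 0$. From $0\to M\to I\to C\to 0$ with $I$ the injective hull of $M$, the long exact sequence gives a surjection $\Ext^g_A(I,N)\twoheadrightarrow\Ext^g_A(M,N)$ (using $\Ext^{g+1}_A(C,N)=0$), so $\Ext^g_A(I,N)\neq 0$. Dually, from $0\to K\to P\to N\to 0$ with $P$ a projective cover, one gets $\Ext^g_A(I,P)\twoheadrightarrow\Ext^g_A(I,N)$, so $\Ext^g_A(I,P)\neq 0$. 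Since $I\in\add(\DD(A))$ and $P\in\add(A)$, this forces $\Ext^g_A(\DD(A),A)\neq 0$, while vanishing for $i>g$ is clear. No Serre duality is needed.
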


\vspace{-.2cm}

 Dominant dimension was introduced by Nakayama,
 and developed later mainly by Morita and Tachikawa, see \cite{Y96}
 for more information, and \cite{FM,FK11a,FK11b,FK15,FKY} for a recent
 development partly motivating our aims and results.

 \begin{Def}\label{defn:dominant dimension}
   Let $A$ be a $k$-algebra. The {\em dominant dimension} of $A$,
   denoted by $\domdim(A)$, is defined to be
   the largest number $d \geq 0$ (or $\infty$) such that in
   a minimal injective resolution $0\to {_A}A\to I^0\to I^1\to I^2\to \cdots$
   of the left regular $A$-module, $I^i$ is projective for all $i<d$ (or $\infty$).
 \end{Def}

 Thus, $I^0$ not being projective, that is $\domdim(A)= 0$, is equivalent to $A$
 not having a faithful projective-injective module.
 The module $I\in \modcat{A}$ is projective and injective if and only
 if so is $\DD(I)$ in $\modcat{A\opp}$. It follows that $\domdim (A) =
 \domdim (A\opp)$ and thus $\domdim(A)$ can be defined alternatively
 via right $A$-modules.
 If $\domdim(A)\geq 1$, then there exists a
 unique (up to isomorphism) minimal faithful right $A$-module (and also
 a unique up to isomorphism minimal faithful left $A$-module). It must be
 projective and injective, hence of the form $eA$ for some
 idempotent $e$ in $A$. If $\domdim(A)\geq 2$, then $eA$ is a
 faithful balanced bimodule, i.e., there is a double centralizer
 property, namely $A\cong \End_{eAe}(eA)$ canonically. Algebras of
 infinite dominant dimension are conjectured to be self-injective
 (this is the celebrated \emph{Nakayama conjecture}), see \cite{Y96}.
 The following characterisation of dominant dimension is due to
 M\"{u}ller.

\vspace{-.2cm}
 \begin{Prop}[(M\"uller \cite{Mu68})] \label{prop:Muller characterisation}
 Let $A$ be a $k$-algebra of dominant dimension at least $2$.
 Let $eA$ be a minimal faithful right $A$-module and $n\geq 2$ be an integer.
 Then $\domdim(A)\geq n$ if and only if $\Ext_{eAe}^i(eA,eA)=0$ for $1\leq i\leq n-2$.
 \end{Prop}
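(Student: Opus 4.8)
The plan is to run Müller's classical argument, organised around the algebra $B:=eAe$ and the Schur-type functor $e\cdot-=\Hom_A(Ae,-)\colon\modcat{A}\to\modcat{B}$. Since $\domdim A\ge 2$, I may freely use the standard dictionary attached to this situation: $\add({}_AAe)$ is precisely the class of projective-injective $A$-modules; $e\cdot-$ is exact and restricts to an equivalence $\add({}_AAe)\xrightarrow{\ \sim\ }\pmodcat{B}$ with quasi-inverse $Ae\otimes_B-$; the module $eA$ is a generator-cogenerator for $B$, so in particular $\imodcat{B}\subseteq\add({}_BeA)$; and $A\cong\End_B({}_BeA)$. From $eA\otimes_AAe\cong B$ one gets the chain of adjunctions $Ae\otimes_B-\ \dashv\ e\cdot-\ \dashv\ \Hom_B(eA,-)$ and a natural isomorphism $e\,\Hom_B(eA,N)\cong N$ for $N\in\modcat{B}$. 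Finally, $e\cdot-=eA\otimes_A-$ is exact because $eA$ is a direct summand of $A$ as a right $A$-module, so its right adjoint $\Hom_B(eA,-)$ carries injective $B$-modules to injective $A$-modules; together with $\Hom_B(eA,\add({}_BeA))\subseteq\pmodcat{A}$ this shows that $\Hom_B(eA,E)$ is projective-injective over $A$ for every injective $B$-module $E$.

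The heart of the argument is a comparison complex. Let $0\to eA\to E^0\to E^1\to\cdots$ be the minimal injective coresolution of ${}_BeA$, put $D^j:=\Hom_B(eA,E^j)$, and let $D^\bullet$ be the resulting complex of projective-injective $A$-modules. Then $e\,D^\bullet\cong E^\bullet$, and computing $\Ext^\ast_B(eA,eA)$ from the coresolution $E^\bullet$ yields $H^0(D^\bullet)\cong\Hom_B(eA,eA)\cong A$, with the canonical map $A\to D^0$ being the inclusion of cocycles, while $H^i(D^\bullet)\cong\Ext^i_B(eA,eA)$ for $i\ge 1$. Thus $0\to A\to D^0\to D^1\to D^2\to\cdots$ is a complex of projective-injective $A$-modules that is exact at $A$ and at $D^0$ and whose only possible defect in degree $i\ge 1$ is $\Ext^i_B(eA,eA)$.

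For the easy implication, suppose $\Ext^i_B(eA,eA)=0$ for $1\le i\le n-2$; then $0\to A\to D^0\to\cdots\to D^{n-2}\to Z\to 0$ is exact, with $Z:=\Img(D^{n-2}\to D^{n-1})\subseteq D^{n-1}$. Since the minimal injective coresolution $I^\bullet$ of ${}_AA$ is a direct summand of any injective coresolution of $A$, the terms $I^0,\dots,I^{n-2}$ are direct summands of $D^0,\dots,D^{n-2}$, hence projective, and, tracking cokernels, the cosyzygy $\Omega^{-(n-1)}A$ is a direct summand of $Z$ and therefore embeds into the projective-injective module $D^{n-1}$; hence its injective envelope $I^{n-1}$ is a summand of $D^{n-1}$ and is projective as well. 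So $\domdim A\ge n$ (and for $n=2$ the hypothesis is vacuous and the conclusion is the standing assumption).

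The hard implication is the real obstacle. Assume $\domdim A\ge n$, so $I^0,\dots,I^{n-1}$ are projective-injective and the truncation $0\to A\to I^0\to\cdots\to I^{n-1}$ lies term by term in $\add({}_AAe)$; applying $e\cdot-$ gives an exact sequence $0\to eA\to eI^0\to\cdots\to eI^{n-1}$ of $B$-modules with all $eI^j\in\pmodcat{B}$. Lifting $\mathrm{id}_A$ to a chain map $\varphi\colon I^\bullet\to D^\bullet$ (possible since each $D^j$ is injective and $I^\bullet$ is a genuine coresolution of $A$), the mapping cone of $\varphi$ has cohomology concentrated in degrees $\ge 1$, with $H^i\cong\Ext^i_B(eA,eA)$, and one must show it is concentrated in degrees $\ge n-1$. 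In degrees $\le n-2$ the cone has all terms in $\add({}_AAe)$, where $e\cdot-$ is an equivalence onto $\pmodcat{B}$, so there $\varphi$ is faithfully encoded by the chain map $e\varphi\colon eI^\bullet\to E^\bullet$ between two coresolutions of ${}_BeA$. The crux is to push this comparison through: using the equivalence $\add({}_AAe)\simeq\pmodcat{B}$, the double-centraliser identity $A\cong\End_B(eA)$, and dimension shifting, one deduces that projectivity of the first $n$ injective terms of ${}_AA$ forces $\Ext^i_B(eA,eA)=0$ for $1\le i\le n-2$; I would organise this as an induction on $n$, reducing $n+1$ to $n$ together with the single vanishing $\Ext^{n-1}_B(eA,eA)=0$, extracted from the embedding of $\Omega^{-(n-1)}A$ into a projective-injective $A$-module. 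This step is exactly where the hypothesis $\domdim A\ge 2$, which underpins the entire translation between $A$-modules and $B$-modules, is indispensable; the rest of the proof is formal.
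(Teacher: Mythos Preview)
The paper does not prove this proposition at all: it is stated with attribution to M\"uller \cite{Mu68} and used as a black box. So there is no ``paper's own proof'' to compare against; what you have written is an independent attempt at the classical argument.

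Your treatment of the implication $\Ext^i_{eAe}(eA,eA)=0\Rightarrow\domdim A\ge n$ is sound: the comparison complex $D^\bullet=\Hom_B(eA,E^\bullet)$ really does consist of projective-injective $A$-modules (your argument via the right adjoint of an exact functor is correct), and the summand argument for the minimal injective coresolution of $A$ goes through degree by degree as you describe.

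The converse, however, is not actually proved. You set up the mapping-cone comparison $\varphi\colon I^\bullet\to D^\bullet$, observe that in degrees $\le n-1$ everything lives in $\add(Ae)$, and then write that ``one deduces'' the vanishing and that you ``would organise this as an induction on $n$''. This is a description of a plan, not a proof. The honest obstacle is that $H^i(\mathrm{Cone}(\varphi))\cong H^i(D^\bullet)$ for $i\ge 1$ regardless of $\varphi$, so the cone by itself carries no new information; one really has to exploit that $e\cdot-$ is an \emph{equivalence} on $\add(Ae)$ to transport acyclicity of $I^\bullet$ in low degrees to acyclicity of $D^\bullet$. Concretely: applying $e\cdot-$ to the exact truncation $0\to A\to I^0\to\cdots\to I^{n-1}$ gives an exact sequence $0\to eA\to eI^0\to\cdots\to eI^{n-1}$ in $\add({}_BeA)$, and one must argue that $\Hom_B(eA,-)$ is exact on such sequences (equivalently, that the relevant $\Ext^1_B(eA,-)$ groups vanish on the successive cokernels). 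Your sketch gestures at this but does not carry out the dimension-shifting or identify which exactness property of $\Hom_B(eA,-)$ is being invoked. A second point that deserves a sentence of justification is the claim that ${}_BeA$ is a \emph{cogenerator} for $B$ (needed for $\imodcat{B}\subseteq\add({}_BeA)$); the generator part is immediate from $B=e(Ae)$, but the cogenerator part uses that $\DD(eA)$ is again projective-injective over $A$, which follows from $eA$ being minimal faithful but is worth saying.

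In short: your architecture matches M\"uller's, and the easy direction is complete, but the hard direction remains a sketch with the key step (``one deduces \ldots\ I would organise this as an induction'') not executed.
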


\vspace{-.2cm}
Of particular interest later on will be certain derived equivalences,
(iterated) almost $\nu$-stable derived equivalences, defined in
\cite{Hu12,HuXi10}. Here, a certain subclass of both projective and injective
 (projective-injective for short) modules is crucial. This motivates the
 following variation of dominant dimension, which is crucial for our main results.

\vspace{-.2cm}
 \begin{Def}\label{defn:strong projective injective modules}
   Let $A$ be a $k$-algebra and $\nu_{_A}=\DD\Hom_A(-,A):
   \modcat{A}\to \modcat{A}$ be the Nakayama functor.
   A projective $A$-module $P$ is said to be
{\em strongly projective-injective} if $\nu_{_A}^i(P)$ is projective for
   all $i>0$.
   By $\stp{A}$ we denote the full subcategory of $\pmodcat{A}$ consisting of
   strongly projective-injective $A$-modules.
 \end{Def}


\vspace{-.2cm}
In \cite{HuXi}, strongly projective-injective modules are called
$\nu$-stably projective; since this may be misunderstood as implying
$\nu$-stable, we use a different terminology here.

 Strongly projective-injective modules are
 projective and injective, which justifies their name.
 An easy proof goes as follows, see
 also \cite[Lemma 2.3]{HuXi}. First note that $P$
 is strongly projective-injective if and only if so is each
 of its direct summands. Thus we may assume that $P$ is
 indecomposable. Since the Nakayama functor $\nu_{_A}$ sends indecomposable
 projective modules to indecomposable injective modules,
 it follows that $\nu^i_{_A}(P)$ are indecomposable projective-injective
 for all $i>0$. But there are only finitely
 many indecomposable objects in $\pmodcat{A}$, so there must
 exist $0<a<b$ such that $\nu_{_A}^a (P)\cong \nu_{_A}^b(P)$.
 Using again that $\nu_{_A}: \pmodcat{A}\to \imodcat{A}$ is an equivalence,
 we deduce that $P\cong \nu_{_A}^{b-a}(P)$. In particular, $P$ is both
 projective and injective.
 \begin{Def}\label{defn:strong dominant dimension}
   Let $A$ be a $k$-algebra. The {\em $\nu$-dominant
     dimension}
   of $A$, denoted by $\Ndomdim(A)$, is defined
   to be the largest number $d \geq 0$ (or $\infty$) such that in a minimal
   injective resolution $0\to {_A}A\to I^0\to I^1\to I^2\to \cdots$
   of the left regular $A$-module, $I^i$ is strongly projective-injective
   for all $i<d$ (or $\infty$).
 \end{Def}

 By definition, $\Ndomdim(A)\leq \domdim(A)$, but in general there is no equality. Here is an example
 illustrating the difference between these two dimensions.
 Let $A$ be the path algebra $kQ$ of the quiver $Q: 1\lra 2$. Then $P_1$
 is projective-injective, but not strongly projective-injective, since
 $\nu_{_A}(P_1)\cong \mathrm{D}\Hom_{A}(P_1,A)\cong \mathrm{D}(e_1A)\cong I_1$
 and $I_1$ is not projective. As a result, $\domdim(A)=1$, while $\Ndomdim(A)=0$.

 In our context, $\nu$-dominant dimension is important, since it allows to identify
 particular self-injective centraliser subalgebras:

\begin{Lem}\label{lem:two dominant dimensions coincide}
   Let $A$ be a $k$-algebra. If $\Ndomdim(A)\geq 1$, then
   all projective-injective $A$-modules are strongly
   projective-injective, and thus $\Ndomdim(A)=\domdim(A)$.
   In this case, endomorphism rings of minimal faithful
 left $A$-modules are self-injective.
 \end{Lem}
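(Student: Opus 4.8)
The plan is to verify the three assertions in turn: (i) if $\Ndomdim(A)\geq 1$ then every projective-injective $A$-module is strongly projective-injective; (ii) consequently $\Ndomdim(A)=\domdim(A)$; and (iii) in that case $\End_A(M)$ is self-injective for $M$ a minimal faithful left $A$-module. The heart of the matter is (i), and the key observation is that $\Ndomdim(A)\geq 1$ forces $I^0$, the injective envelope of ${}_AA$, to be strongly projective-injective; since ${}_AA$ embeds in $I^0$, the module $I^0$ is faithful, so $\add(I^0)$ contains every projective-injective indecomposable. Indeed, any indecomposable projective-injective $Q$ has a nonzero map to some injective summand of $I^0$ coming from faithfulness, and since $Q$ is injective this map splits (or one argues dually: $Q$ is projective and $I^0$ is faithful projective-injective, so $Q$ is a summand of a direct sum of copies of $I^0$ by the standard argument that a faithful projective-injective module generates all projective-injectives in its additive closure). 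Either way $\add(Q)\subseteq\add(I^0)\subseteq\stp{A}$, and since being strongly projective-injective passes to direct summands (as noted in the excerpt after Definition \ref{defn:strong projective injective modules}), $Q$ is strongly projective-injective.

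Once (i) is established, (ii) is almost immediate: in the minimal injective resolution $0\to{}_AA\to I^0\to I^1\to\cdots$, each $I^i$ with $i<\domdim(A)$ is projective-injective by definition of dominant dimension, hence strongly projective-injective by (i); so the defining condition for $\Ndomdim$ is met up to $\domdim(A)$, giving $\Ndomdim(A)\geq\domdim(A)$. Combined with the general inequality $\Ndomdim(A)\leq\domdim(A)$ recorded just before the lemma, this yields equality. (One should treat the case $\domdim(A)=\infty$ in parallel, with the same argument.)

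For (iii), note first that $\Ndomdim(A)\geq 1$ gives $\domdim(A)\geq 1$ by (ii), so by the discussion preceding Proposition \ref{prop:Muller characterisation} there is a unique minimal faithful left $A$-module, it is projective-injective, and it has the form $M=Ae$ for an idempotent $e$; then $\End_A(Ae)\cong (eAe)\opp$. By (i), $M$ lies in $\stp{A}$, so every module in $\add(M)$ is strongly projective-injective, in particular each $\nu_A^i(M)$ is projective for $i>0$. The strategy is to show that the minimal faithful module $M$ is $\nu_A$-periodic: running the argument from the text that follows Definition \ref{defn:strong projective injective modules} (finitely many indecomposable projective-injectives, and $\nu_A\colon\pmodcat{A}\to\imodcat{A}$ an equivalence) we get $M\cong\nu_A^t(M)$ for some $t>0$, and moreover $\nu_A$ permutes the indecomposable summands of $M$. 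This makes $eAe$ a self-injective algebra: the standard fact is that $\End_A(M)$ is self-injective precisely when $\add(M)$ is closed under $\nu_A$ (equivalently under $\nu_{\End_A(M)}$ via the equivalence $\Hom_A(M,-)\colon\add(M)\to\pmodcat{\End_A(M)}$), and here $\nu_A(\add M)=\add(\nu_A M)=\add(M)$. Hence $\End_A(M)$, and with it $eAe$, is self-injective.

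The main obstacle I anticipate is part (i): one must be careful about which direction of the "faithful projective-injective generates" argument to use and to phrase it so it applies to a left module $I^0$ that is the injective envelope of the regular module rather than the minimal faithful module per se. The cleanest route is probably to observe that minimality of the injective resolution makes $I^0$ the injective envelope of ${}_AA$, hence a faithful module containing ${}_AA$, hence its indecomposable injective summands already exhaust (up to the multiplicities needed) all indecomposable projective-injectives: for any indecomposable projective-injective $Q$, faithfulness of $I^0$ gives a nonzero homomorphism $Q\to (I^0)^{\oplus m}$ for suitable $m$, which splits because $Q$ is injective, so $Q\in\add(I^0)$. After that the remaining steps are formal, relying only on results already in the excerpt.
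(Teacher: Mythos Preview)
Your proposal is correct and follows essentially the same route as the paper, with one imprecision worth fixing. In part (i) you say faithfulness of $I^0$ gives a \emph{nonzero} homomorphism $Q\to (I^0)^{\oplus m}$ which splits because $Q$ is injective; but a merely nonzero map out of an injective need not split. What you need (and what faithfulness actually gives) is a \emph{monomorphism}: $A\hookrightarrow (I^0)^{\oplus m}$, and since $Q$ is a summand of ${}_AA$, the composite $Q\hookrightarrow A\hookrightarrow (I^0)^{\oplus m}$ is a monomorphism, which then splits as $Q$ is injective. The paper streamlines this by using the injective envelope directly: the composite $P\hookrightarrow A\hookrightarrow I^0$ is a split monomorphism, so $P\in\add(I^0)\subseteq\stp{A}$, with no detour through faithfulness.

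For part (iii) your argument via $\nu_A$-stability of $\add(Ae)$ is the same idea as the paper's, just packaged differently. Both come down to $\nu_A(Ae)\in\add(Ae)$: the paper notes this holds because $\nu_A(Ae)=\DD(eA)$ is projective-injective (from $Ae\in\stp{A}$) and $\add(Ae)$ contains all projective-injectives, and then finishes in one line by observing ${}_{eAe}\DD(eAe)=e\,\DD(eA)\in e\,\add({}_AAe)=\add({}_{eAe}eAe)$.
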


Suppose $\Ndomdim(A)\geq 1$.
Then, a minimal faithful left $A$-module is of the form $Ae$, and
strongly projective-injective. We will use its endomorphism ring $eAe$
as `the largest self-injective centraliser subalgebra'.

 \begin{proof}
 Since $\Ndomdim(A)\geq 1$, the injective envelope
 $I$ of ${}_AA$ is strongly projective-injective.
 Let $P$ be an indecomposable projective-injective $A$-module. The composition
 \mbox{$P\hookrightarrow A\hookrightarrow I$} is a split monomorphism. Thus $P$
 is a direct summand of $I$, and in particular strongly projective-injective.
 Consequently, all projective-injective $A$-modules are strongly
 projective-injective. Hence the two dominant dimensions coincide.

 Let $Ae$ be a minimal
 faithful left $A$-module.
 By assumption, it is strongly projective-injective. Hence
 $\DD(eA) \cong \nu_{_A}(Ae)$ belongs to $\add(Ae)$,
 and in particular \mbox{${_{eAe}}\DD(eAe) = e \DD(eA)$} 
$\in e\add{({_A}Ae)} = \add{{_{eAe}}(eAe)}$,
 that is, $eAe$ is self-injective.
 \end{proof}

The endomorphism ring of a strongly projective-injective $A$-module
in general may not be self-injective, even when assuming
$\Ndomdim(A)\geq 1$. For instance, let $A$ be the self-injective
Nakayama algebra with cyclic quiver, three simple modules and $\rad(A)^2=0$.
Then each indecomposable projective module is injective as well, and even
strongly projective-injective, but the endomorphism ring of a sum of two
non-isomorphic indecomposable projective modules never is self-injective.

The proof of Lemma \ref{lem:two dominant dimensions coincide} works not
only for $Ae$, but also for any direct sum of copies of $Ae$.

\begin{Def} \label{defn:associatedselfinjectivealgebra}
Let $A$ be a $k$-algebra with $\Ndomdim(A)\geq 1$ and let $Ae$ be a minimal
faithful left $A$-module. Then the centraliser algebra $eAe$ is called
the {\em associated self-injective algebra of $A$}.
\end{Def}


These associated self-injective
subalgebras have been introduced and strongly used before in work of
Martinez-Villa \cite{MV1,MV2} reducing validity of the Auslander-Reiten
conjecture on stable equivalences preserving the number of non-projective
simple modules to the case of self-injective algebras.
There, the setup is more general and the associated self-injective algebras
have been allowed to be zero, which does not make sense in our situation
as we need a strong connection between the given algebra and its associated
self-injective subalgebra. The term `associated self-injective algebra'
first occurred in \cite{DugasMV}.

\subsection{Morita algebras and almost self-injective algebras} \label{subsec:Morita algebras}
The term `Morita algebras' (not related to Morita rings occurring in Morita contexts)
was coined by Kerner and Yamagata in \cite{KY13}, when they investigated algebras
 first studied by Morita \cite{Mo58} as endomorphism rings
 of generators over self-injective algebras.
 The subclass of Morita algebras consisting of endomorphism rings
 of generators over symmetric algebras, called gendo-symmetric algebras,
 was introduced and studied independently in \cite{FK11b,FK15}.

 \begin{Def}[\cite{KY13,FK11b,FK15}] \label{def:morita algebras}
   A $k$-algebra $A$ is called a {\em Morita algebra}
   if $A$ is isomorphic to \mbox{$\End_H(H\oplus M)$}
   for some self-injective algebra
   $H$ and some module $M\in \modcat{H}$.
   $A$ is called {\em gendo-symmetric} if in addition
   $H$ is symmetric.
 \end{Def}

 Gendo-symmetric algebras form a large class of algebras, cutting across
 traditional boundaries such as finite or infinite global dimension. Examples
 of finite global dimension
 include classical and quantised Schur algebras $S(n,r)$ (with $n \geq r$),
 blocks of the Bernstein-Gelfand-Gelfand category $\mathcal O$ and many other
 algebras occurring in algebraic Lie theory and elsewhere.
 Examples of infinite global dimension include symmetric
 algebras, for instance group algebras and Hecke algebras.
 Morita algebras include in addition self-injective,
 and in particular Frobenius algebras, and their Auslander algebras.
Morita algebras have been characterised in several ways, see
\cite{KY13,FK15,FKY}.
 For our purposes, a new characterisation is needed in terms of $\nu$-dominant
 dimension (Section \ref{subsec:global dimension and dominant dimension}):

\vspace{-.2cm}
 \begin{Prop}\label{prop:characterise morita algebras}
   Let $A$ be a $k$-algebra. Then $\Ndomdim(A)\geq 2$ if and only
   if $A$ is a Morita algebra.
 \end{Prop}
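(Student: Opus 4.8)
The plan is to prove the two implications separately, using the classical theory of dominant dimension (M\"uller's characterisation, Proposition~\ref{prop:Muller characterisation}, and the double-centraliser property for $\domdim \geq 2$) together with Lemma~\ref{lem:two dominant dimensions coincide}.

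For the implication ``$\Ndomdim(A) \geq 2 \Rightarrow A$ is a Morita algebra'': assume $\Ndomdim(A) \geq 2$. By Lemma~\ref{lem:two dominant dimensions coincide}, $\Ndomdim(A) = \domdim(A) \geq 2$, so the minimal faithful left $A$-module has the form $Ae$ with $Ae$ projective-injective, and $H := eAe$ is self-injective; moreover the double-centraliser property gives $A \cong \End_{H}(eA\opp)$... more precisely, working on the correct side, $A \cong \End_{eAe}(Ae)$ acting on the minimal faithful module. The point is to check that $Ae$ (viewed as a right $eAe$-module, or $eA$ as a left $eAe$-module) is a \emph{generator} over $H=eAe$, i.e.\ that $eAe \in \add({}_{eAe}eA)$. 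This is exactly where $\domdim \geq 2$ is used via the standard argument: $e \in \add(Ae)$ as a left $A$-module trivially fails, so instead one argues that the functor $\Hom_A(Ae,-)$ restricted to $\add({}_AAe)$ is fully faithful and lands in $\pmodcat{eAe}$, and since $Ae$ contains a projective generator's worth of information (indeed $A \hookrightarrow (Ae)^m$ for some $m$ because $Ae$ is faithful and injective, hence $A$ is a summand of $\add(Ae)$ after applying $\nu$... ) — the cleanest route is: $eA$ is a generator for $H$-mod because $H = eAe \cong \End_A(Ae)\opp$ and $Ae$ being injective-faithful forces $DA \in \add(Ae)$, so applying $e(-)$ gives $D(eA) = e\,DA \in \add(eAe \text{ as left } H\text{-module})$... then dualize to see $eA$ is a generator. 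So $A \cong \End_H(eA)$ with $eA \supseteq$ a copy of $H$ as summand, hence $A$ is a Morita algebra.

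For the converse ``$A$ a Morita algebra $\Rightarrow \Ndomdim(A) \geq 2$'': write $A = \End_H(H \oplus M)$ with $H$ self-injective and $M \in \modcat{H}$. It is classical (this is essentially the Morita--Tachikawa correspondence) that such $A$ has $\domdim(A) \geq 2$: the idempotent $e$ projecting onto the summand $H$ satisfies $eAe \cong \End_H(H)\opp \cong H$, and $Ae \cong \Hom_H(H\oplus M, H)$ is the minimal faithful projective-injective module, with $\Ext^1_{eAe}(eA,eA) = \Ext^1_H(\cdots) $; since $H$ is self-injective all higher $\Ext$ over $H$ vanish on the relevant modules, so $\domdim(A) = \infty \geq 2$ in M\"uller's criterion — actually one only needs $\geq 2$ for this direction, which is immediate from the double centraliser. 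The real work is upgrading $\domdim(A) \geq 2$ to $\Ndomdim(A) \geq 2$, i.e.\ showing the first two terms $I^0, I^1$ of the minimal injective resolution of ${}_AA$ are not merely projective-injective but \emph{strongly} projective-injective. For this I would show that every projective-injective $A$-module lies in $\add({}_AAe)$ (true whenever $\domdim(A) \geq 1$, since $Ae$ is the minimal faithful one and any projective-injective summand of $A$ embeds as a summand of $I^0 \in \add(Ae)$), and then that $\nu_A$ preserves $\add({}_AAe)$: indeed $\nu_A(Ae) = D\Hom_A(Ae,A) = D(eA)$, and one checks $D(eA) \in \add({}_AAe)$ because $D(eA)$ is projective-injective (as $eA$ is) hence a summand of $I^0 \in \add(Ae)$. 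Iterating, $\nu_A^i(Ae) \in \add(Ae) = \add(\stp{A})$-candidates for all $i \geq 0$, so $Ae$ is strongly projective-injective, and since $I^0, I^1 \in \add(Ae)$ we conclude $\Ndomdim(A) = \domdim(A) \geq 2$.

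The main obstacle, and the step deserving the most care, is the equivalence between ``projective-injective'' and ``strongly projective-injective'' once $\domdim(A) \geq 1$ (or here $\geq 2$): one must verify that the class $\add({}_AAe)$ — equivalently the additive closure of all projective-injective modules — is closed under the Nakayama functor $\nu_A$. The key input is that $D(eA) = \nu_A(Ae)$ is again projective-injective and faithful, hence by minimality of $Ae$ lies in $\add(Ae)$; this is precisely the argument already used in the proof of Lemma~\ref{lem:two dominant dimensions coincide} (``the proof works not only for $Ae$ but also for any direct sum of copies of $Ae$''), so I would invoke that. The Morita-algebra side of the converse is then essentially bookkeeping with the Morita--Tachikawa double-centraliser dictionary, and the forward direction is the reconstruction of $(H, M)$ from $(e, Ae)$, both of which are standard once the $\nu$-stability of $\add(Ae)$ is in hand.
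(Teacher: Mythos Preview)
Your forward direction is essentially the paper's argument (Lemma~\ref{lem:two dominant dimensions coincide} plus the double-centraliser property), though your justification that $eA$ is a generator over $eAe$ is tangled and contains a false claim: $\DD A \in \add(Ae)$ would force every injective to be projective, i.e.\ $A$ self-injective. The generator property is immediate, since $eA = eAe \oplus eA(1-e)$ as left $eAe$-modules.

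The converse has a genuine gap. You correctly observe that the Morita--Tachikawa correspondence gives $\domdim A \geq 2$, and that the remaining work is to upgrade projective-injective to \emph{strongly} projective-injective. But your key step --- ``$\DD(eA)$ is projective-injective (as $eA$ is)'' --- is precisely what must be proved: $\DD(eA) = \nu_A(Ae)$ is automatically injective, and asking it to be projective is equivalent to asking $eA$ to be injective as a right module, which is equivalent to $\nu_A(Ae)$ being projective. This implication ``$Ae$ projective-injective on the left $\Rightarrow eA$ projective-injective on the right'' fails in general (for $A = k[1\to 2]$ the module $P_1$ is projective-injective and $e_1Ae_1=k$ is self-injective, yet $\nu_A P_1$ is not projective), so some use of the Morita hypothesis is required, and you have not supplied one. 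Invoking the proof of Lemma~\ref{lem:two dominant dimensions coincide} is circular: that argument starts from $\Ndomdim \geq 1$, which already contains the $\nu$-stability you are trying to establish.

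The paper sidesteps this by working directly with the presentation $A = \End_H(M)$ for a generator $M$ over the self-injective algebra $H$: one computes $\nu_A \Hom_H(M,E) \cong \Hom_H(M, \nu_H E) \cong \Hom_H(M,E)$ for $E$ a basic projective $H$-module, using $\nu_H E \cong E$ (self-injectivity of $H$), and then applies $\Hom_H(M,-)$ to an injective presentation $0\to M\to P_1\to P_2$ in $\modcat{H}$. This explicit Nakayama computation is the missing ingredient in your approach; it genuinely uses the Morita-algebra structure and cannot be replaced by formal reasoning from $\domdim \geq 2$ alone.
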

 \begin{proof}
Suppose $\Ndomdim(A)\geq 2$. Then by
Lemma \ref{lem:two dominant dimensions coincide}, the minimal faithful
$A$-module $Ae$ is (strongly)
projective-injective and its endomorphism ring $eAe$ is
self-injective. Moreover, $A$ has dominant dimension at least two, which
implies a double centraliser property on $Ae$, between $A$ and $eAe$.
Therefore, $A$ is a Morita algebra.


  Conversely, if $A$ is a Morita algebra, then $A$ is isomorphic to $\End_H(M)$
  for some self-injective algebra $H$ and a generator $M$ in $\modcat{H}$.
  Let $E$ be the direct sum of all pairwise non-isomorphic indecomposable projective
  $H$-modules.

  \smallskip
  {\em Claim.  $\Hom_H(M, E)$ is a strongly  projective-injective (left) $A$-module. }

 \smallskip
 {\em Proof.}
   By definition, $E$ is a direct summand of $M$ and $\add({}_HE)=\pmodcat{H}$.
  Moreover, \mbox{$\nu_{_H}E\cong E$} 
as left $H$-modules and there are isomorphisms of $A$-modules
  \begin{align*}
    \nu_{_A}\Hom_H(M, E) & = \DD\Hom_A\big(\Hom_H(M, E), \Hom_H(M, M)\big) \\
    & \stackrel{(1)}{\cong} \DD\Hom_H(E, M) \stackrel{(2)}{\cong} \DD(\Hom_H(E,H)\otimes_H M) \\
    &\stackrel{(3)}{\cong} \Hom_H(M, \nu_{_H}E)
    \stackrel{(4)}{\cong} \Hom_H(M, E).
  \end{align*}
  Here, the isomorphism $(1)$ follows from $E\in \add{M}$. The isomorphism 
$(2)$ uses \mbox{$\add({}_HE)=\pmodcat{H}$} 
and the isomorphism $(3)$ follows from
  tensor-hom adjointness. $(4)$ uses $\nu_{_H}E\cong E$. This proves the claim.

\medskip
  Now we construct an injective presentation of the left regular $A$-module
  ${_A}A$ (or $\End_H(M)$) as follows:
  take an injective presentation $0\ra {}_HM\ra P_1\ra P_2$ of $M$
  and apply $\Hom_H(M, -)$ to obtain the exact sequence
  $0\ra \Hom_H(M, M)\ra \Hom_H(M, P_1)\ra\Hom_H(M, P_2)$ of left 
\mbox{$A$-modules.}
  Note that both $P_1$ and $P_2$ are projective $H$-modules and thus belong to
  $\add({}_HE)$. Therefore, $\Hom_H(M, P_i)\in \stp{A}$ for $i=1,2$, and
  so $\Ndomdim(A)\geq 2$.
 \end{proof}

 \begin{Koro}
   Let $A$ be a Morita algebra. Then $\domdim(A)=\Ndomdim(A)$.
 \end{Koro}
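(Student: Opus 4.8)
The plan is to deduce the corollary directly from Proposition \ref{prop:characterise morita algebras} together with the general inequality $\Ndomdim(A) \leq \domdim(A)$ recorded after Definition \ref{defn:strong dominant dimension}. First I would dispose of the trivial direction: by that inequality, $\Ndomdim(A) \leq \domdim(A)$ holds for every $k$-algebra, so the content of the corollary is the reverse inequality $\domdim(A) \leq \Ndomdim(A)$ in the case that $A$ is a Morita algebra.

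For the reverse inequality, I would argue as follows. Since $A$ is a Morita algebra, Proposition \ref{prop:characterise morita algebras} gives $\Ndomdim(A) \geq 2$; in particular $\domdim(A) \geq 2$ as well. By Lemma \ref{lem:two dominant dimensions coincide}, the hypothesis $\Ndomdim(A) \geq 1$ implies that every projective-injective $A$-module is strongly projective-injective. Now take a minimal injective resolution $0 \to {}_AA \to I^0 \to I^1 \to I^2 \to \cdots$ of the left regular module. By the very definition of $\domdim(A)$, each term $I^i$ with $i < \domdim(A)$ is projective, hence projective-injective, hence — by the previous sentence — strongly projective-injective. Comparing with the definition of $\Ndomdim(A)$, this says precisely that $\Ndomdim(A) \geq \domdim(A)$. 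Combining the two inequalities yields $\domdim(A) = \Ndomdim(A)$.

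There is really no main obstacle here: the corollary is a formal consequence of results already in place, the only subtlety being to notice that Lemma \ref{lem:two dominant dimensions coincide} is exactly the bridge that collapses the two notions once $\nu$-dominant dimension is at least one, and that Morita algebras satisfy this hypothesis by Proposition \ref{prop:characterise morita algebras}. One should perhaps just remark that the argument in fact shows the stronger fact that $\domdim(A) = \Ndomdim(A)$ for \emph{any} algebra $A$ with $\Ndomdim(A) \geq 1$, which is already contained in the statement of Lemma \ref{lem:two dominant dimensions coincide}; the corollary is the special case relevant to Morita algebras, recorded here for later reference in the applications to Schur algebras and to the derived restriction theorem.
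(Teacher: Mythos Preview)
Your proof is correct and follows exactly the paper's approach: the corollary is obtained immediately from Lemma~\ref{lem:two dominant dimensions coincide} and Proposition~\ref{prop:characterise morita algebras}. You have simply unpacked the one-line argument, and your final remark that the equality already holds for any algebra with $\Ndomdim(A)\geq 1$ is precisely the content of Lemma~\ref{lem:two dominant dimensions coincide}.
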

 \begin{proof}
   This follows immediately from Lemma \ref{lem:two dominant dimensions coincide} and
   Proposition \ref{prop:characterise morita algebras}.
 \end{proof}

Gendo-symmetric algebras appeared first in \cite{FK11b}, see
 \cite{FK15} for further information.
 In our context, there is the following characterisation:

 \begin{Prop}[(\cite{FK11b,FK15})]\label{prop:properties on gendo-symmetric algebras}
  Let $A$ be a $k$-algebra.
  Then $A$ is gendo-symmetric if
  and only if $\DD(A)\otimes_A \DD(A)\cong \DD(A)$ as $A$-bimodules.
  If $A$ is gendo-symmetric, then $\domdim(A) = \sup\{s\mid \Ext^i_A(\DD(A),A)=0,
  \; 1\leq i\leq s-2\}$.
 \end{Prop}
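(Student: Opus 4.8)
The plan is to establish the bimodule characterisation first, and then deduce the dominant dimension formula from it together with Müller's criterion (Proposition \ref{prop:Muller characterisation}). For the first part, write $A=\End_H(M)$ with $H$ self-injective and $M=H\oplus N$ a generator. The key computation is to identify the $A$-bimodule $\DD(A)$ intrinsically. Using the generator $M$ one has $\DD(A)=\DD\End_H(M)\cong\Hom_H(M,\nu_{_H}M)$ as $A$-bimodules, where $\nu_{_H}$ is the Nakayama functor of $H$. Since $H$ is self-injective, $\nu_{_H}$ restricts to a self-equivalence of $\pmodcat H=\imodcat H$, and $H$ is symmetric precisely when $\nu_{_H}\cong \mathrm{id}$ as functors on $\modcat H$ (equivalently $\nu_{_H}H\cong H$ as $H$-bimodules). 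First I would show that $A$ is gendo-symmetric if and only if the $H$-bimodule $\nu_{_H}H$ is isomorphic to $H$; this is essentially the statement that $H$ may be chosen symmetric in the presentation $A=\End_H(H\oplus N)$, and it uses that the self-injective algebra $H$ in a Morita presentation is determined up to Morita equivalence (indeed $H\cong eAe$ for the idempotent $e$ cutting out the minimal faithful module, by Proposition \ref{prop:characterise morita algebras} and the double centraliser property). Then, computing the bimodule tensor product,
\[
\DD(A)\otimes_A\DD(A)\cong \Hom_H(M,\nu_{_H}M)\otimes_A\Hom_H(M,\nu_{_H}M)\cong \Hom_H(M,\nu_{_H}^2 M),
\]
so the condition $\DD(A)\otimes_A\DD(A)\cong\DD(A)$ as $A$-bimodules translates into $\nu_{_H}^2 M\cong \nu_{_H}M$, hence (applying the equivalence $\nu_{_H}^{-1}$) into $\nu_{_H}M\cong M$, and restricting to the projective summand $H$ of $M$ into $\nu_{_H}H\cong H$ as $H$-bimodules, i.e.\ $H$ symmetric.

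For the second statement, assume $A$ is gendo-symmetric. By Proposition \ref{prop:characterise morita algebras} (and its Corollary) $\domdim(A)=\Ndomdim(A)\geq 2$, so Müller's criterion applies: if $eA$ is a minimal faithful right $A$-module then $\domdim(A)=\sup\{n\mid \Ext^i_{eAe}(eA,eA)=0,\ 1\leq i\leq n-2\}$ (with the convention that the supremum is at least $2$). It therefore suffices to identify, for each $i\geq 1$, the groups $\Ext^i_{eAe}(eA,eA)$ with $\Ext^i_A(\DD(A),A)$. Here $eAe\cong H$ is self-injective and $eA$ is the minimal faithful module; under the double centraliser $A\cong\End_H(eA)$, and $eA\cong M$ (respectively $Ae\cong \DD M$) as the relevant one-sided modules. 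The bimodule identity $\DD(A)\otimes_A\DD(A)\cong\DD(A)$ — equivalently $\nu_{_H}\cong\mathrm{id}$ on $\modcat H$ — is exactly what makes the functor $\Hom_H(M,-)$ carry a minimal injective copresentation of $M$ to a minimal projective copresentation of $\DD(A)$ in the same way it carries an injective copresentation of ${}_AA$ to one whose terms are the $\add$ of projective-injectives; tracing through this, $\Hom_A(\DD(A),A)$ becomes $\Hom_H(\DD M,M)\cong \nu_{_H}M\otimes\cdots$ and more generally $\Ext^i_A(\DD(A),A)\cong\Ext^i_H(\DD M,M)\cong\Ext^i_{H}(M,M)^{\vee}$-type groups, which by adjunction and the symmetry of $H$ match $\Ext^i_{eAe}(eA,eA)$.

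The main obstacle I anticipate is the bookkeeping in this last identification: one must be careful that the relevant $\Ext$-groups over $A$ can be computed from a projective resolution whose terms are built from $\add(eA)$-type modules (this is where $\domdim(A)\geq 2$ and the projective-injective structure are used), and then that the functor $\Hom_A({-},A)$ applied to it is computed by $\Hom_{eAe}({-},eA)$ via the double centraliser, with the shift in homological degree matching correctly. The symmetry hypothesis on $H$ (equivalently the bimodule identity just proved) is precisely what removes a potential Nakayama-twist discrepancy between these two $\Ext$-computations; without it one would only get the formula up to such a twist. Once this dictionary is set up, the equality of the two suprema is immediate, and one reads off $\domdim(A)=\sup\{s\mid \Ext^i_A(\DD(A),A)=0,\ 1\leq i\leq s-2\}$.
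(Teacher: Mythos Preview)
Your central computation for the first part rests on the formula $\DD(A)\cong\Hom_H(M,\nu_{_H}M)$ as $A$-bimodules, and this formula is false. The isomorphism $\DD\Hom_H(P,Y)\cong\Hom_H(Y,\nu_{_H}P)$ holds when $P$ is \emph{projective} over $H$, but here $M=H\oplus N$ is only a generator. For a concrete counterexample take $H=k[x]/(x^2)$ (symmetric) and $M=H\oplus k$; then $\nu_{_H}k\cong k$, so $\nu_{_H}M\cong M$ and your formula would give $\DD(A)\cong\Hom_H(M,M)=A$, forcing $A$ to be symmetric. But $A$ is the Auslander algebra of $H$, of global dimension two, hence not self-injective. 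The correct bimodule identification, which is what the paper (via \cite{FK11b}) uses, is $\DD(A)\cong Ae\otimes_{eAe}eA$ when $H=eAe$ is symmetric (equivalently $\Hom_H(M,H)\otimes_H M$); with this in hand, $\DD(A)\otimes_A\DD(A)\cong Ae\otimes_{eAe}eAe\otimes_{eAe}eA\cong\DD(A)$ is immediate. Your argument also does not address the converse implication at all: starting only from the bimodule condition $\DD(A)\otimes_A\DD(A)\cong\DD(A)$, you have no algebra $H$ yet to work with, so you must first extract from this condition that $\domdim(A)\geq 2$ (for instance by dualising to get $\Hom_A(\DD A,A)\cong A$ and using this to exhibit an injective copresentation of $A$ by projective-injectives).

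For the dominant-dimension formula your strategy---reduce to M\"uller's criterion by identifying $\Ext^i_A(\DD(A),A)$ with $\Ext^i_{eAe}(eA,eA)$---is exactly the paper's approach, but your execution is not a proof: the sentence about $\Hom_H(M,-)$ carrying an injective copresentation of $M$ to ``a minimal projective copresentation of $\DD(A)$'' does not type-check (it produces an injective copresentation of $A$, not of $\DD(A)$), and the subsequent chain of ``$\cong$''s is left as ellipses. The paper's argument is short and direct: dualise the bimodule identity to get $\Hom_A(\DD(A),A)\cong A$, hence $\Hom_A(\DD(A),Ae)\cong Ae$; then apply $\Hom_A(\DD(A),-)$ to the minimal injective resolution $\mathscr{E}$ of ${}_AA$ and observe that on the first $\domdim(A)$ terms (which lie in $\add(Ae)$) this functor agrees naturally with $\Hom_{eAe}(eA,e(-))$, so the two Ext-groups coincide in the required range.
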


 \begin{proof}
   The first claim follows from \cite[Theorem 3.2]{FK11b} and the second one
   from \cite[Proposition 3.3]{FK11b}. 
   Alternatively, the characterisation of $\domdim(A)$
   also follows from Proposition \ref{prop:Muller characterisation} combined 
   with the following claim:

\smallskip
   {\em Claim. Let $e$ be an idempotent in $A$ such that $eA$
   is a minimal faithful right $A$-module. Then there are
   isomorphisms $\Ext^i_A(\DD(A),A)\cong \Ext^i_{eAe}(eA,eA)$ in $\modcat{A}$ 
   for \mbox{$0\leq i\leq \domdim(A)-1$.}}

\smallskip
   {\em Proof.}
    Since $A$ is a gendo-symmetric algebra,
   dualising both sides of the isomorphism
${\DD(A)\otimes_A \DD(A)} \cong \DD(A)$
   yields an isomorphism $\Hom_A(\DD(A),A)\cong A$ as $A$-bimodules.
   Thus $\Hom_A(\DD(A),Ae)$ is isomorphic to $Ae$, and in particular it is projective-injective.
   Now, for a minimal injective resolution
   $
   \mathscr{E}:\; 0\to {_A}A\to I^0\to I^1\to  I^2\to \cdots
   $
   of the left regular $A$-module, the first $\domdim(A)$-terms
   are projective-injective, hence belong to $\add{({_A}Ae)}$.
   Applying $\Hom_A(\DD(A),-)$ to the sequence $\mathscr{E}$,
   and comparing the cohomologies of $\Hom_A(\DD(A),\mathscr{E})$
   and $\Hom_{eAe}(eA,e\mathscr{E})$, proves the claim. 
 \end{proof}

The second class of algebras we are going to study generalises self-injective
algebras.

\begin{Def}
  An algebra $A$ is called an {\em almost self-injective algebra},
  if $\Ndomdim(A)\geq 1$ and there is at most one indecomposable
  projective $A$-module that is not injective.
\end{Def}

Among the examples are Schur algebras of finite representation type, which
have finite global dimension and which are Morita algebras. Schur algebras,
and thus Morita algebras, in general are not almost self-injective. 
Schur algebras of finite representation type are examples of gendo-Brauer tree algebras described and classified in \cite{ChM}. These algebras are representation-finite gendo-symmetric and in addition biserial; the corresponding symmetric algebras are Brauer tree algebras. Conversely, almost self-injective
 algebras need not be Morita algebras as the following example illustrates.
 Let $A$ be the $k$-algebra given by the quiver

 \[
    \xymatrix{2 \ar@/^/[r]^\delta & 1 \ar@/^/[l]^\alpha \ar@/^/[r]^\beta & 3 \ar@/^/[l]^\theta}
 \]
 and relations $\{\delta\alpha, \alpha\delta, \theta\beta, \beta\theta\}$.
 The Loewy series of the indecomposable projective left $A$-modules are
 \[
 \xymatrix@C=.5mm@R=.5mm{ \\ P_1= \\} \xymatrix@C=.5mm@R=.5mm{&1&\\2 & &3 \\}\hspace{1cm}
 \xymatrix@C=.5mm@R=.5mm{ \\ P_2= \\} \xymatrix@C=.5mm@R=.5mm{2\\1\\3}\hspace{1cm}
 \xymatrix@C=.5mm@R=.5mm{ \\ P_3= \\} \xymatrix@C=.5mm@R=.5mm{3\\1\\2}
 \]
 Then both $P_2$ and $P_3$ are strongly
 projective-injective, and $\Ndomdim(A)=1$. Thus $A$ is an almost self-injective
 algebra, but not a Morita algebra.

 \section{Derived equivalences}
 \label{sec:derivedequivalences}

 After recalling fundamental facts of derived Morita theory, basic properties
 of standard equivalences will be shown and then almost $\nu$-stable derived
 equivalences will be explained, thus providing crucial tools for proofs
 later on.

 Let $\cal C$ be an additive category. A
 complex $\cpx{X}$ over ${\cal C}$ is a sequence of morphisms $d_X^i$
 in $\cal C$ of the  form
 $$\cdots\longrightarrow X^{i-1}\xrightarrow{d_X^{i-1}}X^i\xrightarrow{d_X^i}X^{i+1}\xrightarrow{d_X^{i+1}}\cdots$$
 with $d_X^{i}d_X^{i+1}=0$ for all $i\in\mathbb{Z}$.
 We call $\cpx{X}$ a \emph{radical} complex if all $d_X^i$ are radical morphisms.
 We denote by $\C{\cal C}$ (respectively $\mathscr{C}^{\mathrm{b}}(\cal C)$)
 the category of complexes (respectively bounded complexes)
 over ${\cal C}$, and by $\K{\cal C}$ (respectively $\Kb{\cal C}$)
 the corresponding homotopy category.
 $\D{\cal C}$ (respectively $\Db{\cal C}$) is
 the derived category of complexes (respectively bounded complexes)
 over ${\cal C}$
 when ${\cal C}$ is abelian.
 Homotopy categories and derived categories are prominent examples
 of \emph{triangulated categories}.
%
%

 For an algebra $A$, we write $\mathscr{C}^*(A)$,
 $\mathscr{K}^*(A)$ and $\mathscr{D}^*(A)$ for
 $\mathscr{C}^*(\modcat{A})$, $\mathscr{K}^*(\modcat{A})$ and
 $\mathscr{D}^*(\modcat{A})$ respectively, where $*$ stands for $\textrm{blank}$
 or $\mathrm{b}$.
 \begin{Lem} \label{lem:morphisms in derived categories}
   Let $A$ be an algebra. Then:

   $(1)$ Every complex of $A$-modules is isomorphic to a radical complex in the homotopy category $\K{A}$.

  $(2)$ Two radical complexes are isomorphic in the homotopy category $\K{A}$ if and only if so they are in $\C{A}$.

   $(3)$ For two complexes
   $\cpx{X}$ and $\cpx{Y}$, if there exists an integer $n$ such that $\cpx{X}$ has
   no cohomology in degrees larger than $n$ (i.e., $\mathrm{H}^i(\cpx{X})=0$
   for $i>n$), and $\cpx{Y}$ has no cohomology in degrees smaller
   than $n$ (i.e., $\mathrm{H}^i(\cpx{Y})=0$ for $i<n$), then
   $$\Hom_{\D{A}}(\cpx{X},\cpx{Y})\cong \Hom_A(\mathrm{H}^n(\cpx{X}),\mathrm{H}^n(\cpx{Y})).$$ \\
   In particular, for any complex $\cpx{Z}$ of $A$-modules,
   $\Hom_{\D{A}}(A, \cpx{Z}[i])$ is isomorphic to its $i$-th cohomology $\mathrm{H}^i(\cpx{Z})$
   for all $i$.
 \end{Lem}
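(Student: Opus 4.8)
The plan is to establish the three parts in turn and then read off the final assertion. Everything rests on two standard facts: $\modcat{A}$ is a Krull--Schmidt category all of whose objects have finite length, and the radical morphisms of an additive category form a two-sided ideal, so that $\mathrm{id}_M-r$ is an automorphism whenever $r$ is a radical endomorphism of an object $M$ of finite length.

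For $(1)$ I would repeatedly strip off contractible summands. If $\cpx{X}$ is not radical, pick $i$ with $d_X^i$ not radical; by the definition of radical morphism recalled above this produces an indecomposable $Z$ which is a direct summand of both $X^i$ and $X^{i+1}$ and on which the corresponding component of $d_X^i$ is an isomorphism. Then $\cpx{X}$ has, in $\C{A}$, a contractible direct summand $\cdots\to 0\to Z\xrightarrow{\,\mathrm{id}\,}Z\to 0\to\cdots$ sitting in degrees $i,i+1$; the complementary summand agrees with $\cpx{X}$ outside these two degrees, has strictly smaller length there, and is isomorphic to $\cpx{X}$ in $\K{A}$ because a contractible complex is zero there. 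Iterating --- a process that in any fixed degree can be performed only finitely often, since lengths strictly drop --- yields a radical complex isomorphic to $\cpx{X}$ in $\K{A}$. For $(2)$, one implication is clear; for the other, let $f\colon\cpx{X}\to\cpx{Y}$ be a homotopy equivalence of radical complexes with homotopy inverse $g$ and null-homotopy $h$ of $\mathrm{id}_{\cpx{X}}-f\cdot g$, so that $\mathrm{id}_{X^i}-f^ig^i=h^i\cdot d_X^{i-1}+d_X^i\cdot h^{i+1}$ in each degree. Since $\cpx{X}$ is radical, $d_X^{i-1}$ and $d_X^i$ are radical morphisms, hence so is the right-hand side; therefore $f^ig^i$ is an automorphism of $X^i$, and symmetrically $g^if^i$ is an automorphism of $Y^i$, so every $f^i$ is invertible and $f$ is an isomorphism in $\C{A}$.

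For $(3)$ I would pass to an $\Hom$-computation in $\K{A}$. By hypothesis the canonical maps $\tau_{\leq n}\cpx{X}\to\cpx{X}$ and $\cpx{Y}\to\tau_{\geq n}\cpx{Y}$ are quasi-isomorphisms, so, without changing $\Hom_{\D{A}}(\cpx{X},\cpx{Y})$, we may assume $X^j=0$ for $j>n$ and $Y^j=0$ for $j<n$. Choose an injective resolution $\cpx{I}$ of $\cpx{Y}$ with $I^j=0$ for $j<n$ (possible since $\cpx{Y}$ is now bounded below with cohomology in degrees $\geq n$); as $\cpx{I}$ is a bounded-below complex of injectives, $\Hom_{\D{A}}(\cpx{X},\cpx{Y})=\Hom_{\K{A}}(\cpx{X},\cpx{I})$. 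A chain map $\cpx{X}\to\cpx{I}$ can be nonzero only in degree $n$, and its degree-$n$ component is, by the two chain-map relations, forced to factor through $\mathrm{H}^n(\cpx{X})$ and to have image in $\Ker d_I^n=\mathrm{H}^n(\cpx{I})=\mathrm{H}^n(\cpx{Y})$ (using $I^{n-1}=0$), which identifies chain maps with $\Hom_A(\mathrm{H}^n(\cpx{X}),\mathrm{H}^n(\cpx{Y}))$. Finally every homotopy $\cpx{X}\to\cpx{I}$ is zero, since in degree $i$ it maps into $I^{i-1}$, which vanishes for $i\leq n$, while $X^i=0$ for $i>n$; hence $\Hom_{\K{A}}(\cpx{X},\cpx{I})$ equals the space of chain maps and the isomorphism follows. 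The concluding assertion is the case $\cpx{X}=A$: since $A$ is projective, $\Hom_{\D{A}}(A,\cpx{Z}[i])=\Hom_{\K{A}}(A,\cpx{Z}[i])$, and computing chain maps modulo homotopies identifies the latter with $\Ker d_Z^i/\Img d_Z^{i-1}=\mathrm{H}^i(\cpx{Z})$.

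The substantial step is $(3)$, and the delicate point there is the vanishing of \emph{all} homotopies: this is exactly what the degree-concentration of $\cpx{I}$ (equivalently, the truncation of $\cpx{X}$) secures, and without it one would obtain only a surjection onto $\Hom_A(\mathrm{H}^n(\cpx{X}),\mathrm{H}^n(\cpx{Y}))$ rather than a bijection. In $(1)$ the only care needed is the bookkeeping for unbounded complexes: the reduction is applied infinitely often in total but only finitely often in each degree, so it is well defined.
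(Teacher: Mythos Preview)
Your argument is correct and follows essentially the route the paper indicates: parts (1) and (2) are the standard Krull--Schmidt reduction and the ``radical ideal'' trick (which is what the cited reference \cite{HuXi10} contains), and part (3) is proved by truncation plus a bounded-below injective resolution, which is exactly what a Cartan--Eilenberg resolution provides in this situation. The only point worth noting is that the concluding assertion about $\Hom_{\D{A}}(A,\cpx{Z}[i])\cong\mathrm{H}^i(\cpx{Z})$ is not literally a special case of the displayed isomorphism in (3) for arbitrary $\cpx{Z}$ (the hypothesis on $\cpx{Y}=\cpx{Z}[i]$ need not hold), so your separate verification via $K$-projectivity of $A$ is the right thing to do; the paper's ``in particular'' should be read in the same spirit.
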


 \begin{proof}
   The first two statements are taken from \cite[p. 112-113]{HuXi10};
   the remaining statements can be shown by using
 truncations and Cartan-Eilenberg resolutions of complexes.
 \end{proof}

 The tensor product of two complexes $\cpx{X}$ and $\cpx{Y}$ in $\C{\Modcat{A^{\mathrm{e}}}}$
 is defined to be the total complex of the double complex with its $(i,j)$-term $X^i\otimes_A Y^j$, and
 their tensor product in $\D{\Modcat{A^{\mathrm{e}}}}$ is the tensor
 product of their projective resolutions in $\C{\Modcat{A^{\mathrm{e}}}}$ \cite{Sp85}.


 Let $\cpx{X}$ be a complex over ${\cal C}$  of the  form
 $\cdots\longrightarrow X^{i-1}\xrightarrow{d_X^{i-1}}X^i\xrightarrow{d_X^i}X^{i+1}\xrightarrow{d_X^{i+1}}\cdots$. Then the {\em brutal truncations} of
 $\cpx{X}$ are defined by cutting off the left or right hand part of the
 complex: $\cpx{X}_{\geq i} = \sigma_{\geq i}(\cpx{X}): \cdots\longrightarrow 0
 \longrightarrow X^i \xrightarrow{d_X^i} X^{i+1}\xrightarrow {d_X^{i+1}}\cdots$
 and  $\cpx{X}_{< i} = \sigma_{< i}(\cpx{X}):
 \cdots\longrightarrow X^{i-2}\xrightarrow{d_X^{i-2}}
 X^{i-1}\longrightarrow 0 \longrightarrow \cdots$. There is an exact
 sequence of complexes $0 \rightarrow \sigma_{\geq i}(\cpx{X}) \rightarrow
 \cpx{X} \rightarrow \sigma_{< i}(\cpx{X}) \rightarrow 0$, which also defines
 a triangle.

 \subsection{Derived equivalences and tilting complexes}

 {\em Derived equivalences} are by definition equivalences of derived categories
 that preserve the triangulated structures, that is shift and
 triangles. Two algebras \emph{$A$ and $B$ are derived
   equivalent} if there is a derived equivalence between their derived
 categories. Despite their importance, derived equivalences are still rather
 unknown and even basic questions are still open.
 The equivalence relation between algebras defined by derived equivalence
 does, however, admit a very satisfactory theory, known as \emph{Morita theory
   for derived categories}, due to Rickard and (more generally for dg algebras)
 to Keller.

 \begin{Theo}[(Rickard \cite{R89}, Keller \cite{Ke94})] 
\label{thm:morita theory for derived category}
   Let $A$ and $B$ be two $k$-algebras. The following statements are equivalent.
   \begin{itemize}
    \setlength\itemsep{-1pt}
     \item[\rm (1)]  $\D{\Modcat{A}}$ and $\D{\Modcat{B}}$ are equivalent as triangulated categories.
     \item[\rm (2)] $\Db{\Modcat{A}}$ and $\Db{\Modcat{B}}$ are equivalent as triangulated categories.
     \item[\rm (3)] $\Db{A}$ and $\Db{B}$ are equivalent as triangulated categories.
     \item[\rm (4)] $\Kb{\pmodcat{A}}$ and $\Kb{\pmodcat{B}}$ are equivalent as triangulated categories.
     \item[\rm (5)] There exists a complex $\cpx{T}\in \Kb{\pmodcat{A}}$ such that
       $\End_{\D{A}}(\cpx{T})\cong B$
   and
     \begin{itemize}
       \setlength\itemsep{-1pt}
       \item[\rm (a)] $\Hom_{\D{A}}(\cpx{T},\cpx{T}[n])=0$ unless $n=0$;
       \item[\rm (b)] $\add(\cpx{T})$ generates $\Kb{\pmodcat{A}}$ as triangulated category.
     \end{itemize}
   \end{itemize}
 \end{Theo}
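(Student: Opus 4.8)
The plan is to reduce every statement to statement (5): I would show, on the one hand, that a complex $\cpx{T}$ as in (5) produces a single derived equivalence which simultaneously witnesses (1), (2), (3) and (4), and on the other hand that each of (1)--(4) produces such a $\cpx{T}$. The glue for the second half is the standard identification of the relevant subcategories by homological conditions that do not refer to the module structure: $\Kb{\pmodcat{A}}$ is, up to equivalence, the subcategory of compact objects of $\D{\Modcat{A}}$, and it is characterised inside $\Db{\Modcat{A}}$ and inside $\Db{A}$ as the objects $C$ with $\Hom_{\D{A}}(C,X[n])=0$ for $|n|\gg 0$ for all $X$; likewise $\Db{A}$ is the subcategory of $\Db{\Modcat{A}}$ of objects with finitely generated cohomology. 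Consequently any triangle equivalence between the larger categories in (1)--(4) restricts to a triangle equivalence $F\colon\Kb{\pmodcat{B}}\xrightarrow{\ \sim\ }\Kb{\pmodcat{A}}$. I will take these identifications as given (they are the routine, if not entirely trivial, part).

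Extraction of the tilting complex: given such an $F$, set $\cpx{T}:=F(B)$; then $\cpx{T}\in\Kb{\pmodcat{A}}$. Since $\Kb{\pmodcat{A}}$ is a full triangulated subcategory of $\D{A}$ and $F$ is a triangle equivalence, $\End_{\D{A}}(\cpx{T})\cong\End_{\D{B}}(B)$, which by Lemma~\ref{lem:morphisms in derived categories} is $\mathrm{H}^0(B)\cong B$ as algebras, and $\Hom_{\D{A}}(\cpx{T},\cpx{T}[n])\cong\Hom_{\D{B}}(B,B[n])\cong\mathrm{H}^n(B)=0$ for $n\neq 0$, which is (5a). Finally $\Kb{\pmodcat{B}}=\thick(B)$ (every finitely generated projective is a summand of a free module), so $\Kb{\pmodcat{A}}=F(\thick(B))=\thick(\cpx{T})$, i.e. $\add(\cpx{T})$ generates $\Kb{\pmodcat{A}}$, which is (5b). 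This gives $(1),(2),(3),(4)\Rightarrow(5)$.

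The one substantial direction is $(5)\Rightarrow(1)$, for which I would use the dg-algebra machinery of \cite{Ke94}. View $\cpx{T}$ as a bounded complex of finitely generated projective $A$-modules, hence as a homotopically projective object, and form its dg endomorphism algebra $\mathcal{E}:=\HomP_A(\cpx{T},\cpx{T})$, which represents $\RHom_A(\cpx{T},\cpx{T})$. Then $\mathrm{H}^n(\mathcal{E})\cong\Hom_{\D{A}}(\cpx{T},\cpx{T}[n])$, so by (5a) the cohomology of $\mathcal{E}$ is concentrated in degree $0$ and equals $B$ there; the truncation $\tau_{\leq 0}\mathcal{E}$ is a dg subalgebra, and the canonical maps $\tau_{\leq 0}\mathcal{E}\hookrightarrow\mathcal{E}$ and $\tau_{\leq 0}\mathcal{E}\twoheadrightarrow \mathrm{H}^0(\mathcal{E})=B$ are quasi-isomorphisms of dg algebras, whence $\D{\mathcal{E}}\simeq\D{B}$. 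On the other hand (5b) makes $\cpx{T}$ a compact generator of $\D{\Modcat{A}}$ ($A\in\thick(\cpx{T})$ and $A$ already generates), so Keller's theorem yields a triangle equivalence $\RHom_A(\cpx{T},-)\colon\D{\Modcat{A}}\xrightarrow{\ \sim\ }\D{\mathcal{E}}$; composing gives (1). Alternatively, Rickard's original argument \cite{R89} builds directly from $\cpx{T}$ a bounded complex of $B$-$A$-bimodules $\cpx{\Delta}$ with $\cpx{\Delta}\otimesL_A-$ an equivalence. Either way the equivalence is realised by $\cpx{\Delta}\otimesL_A-$ for a two-sided tilting complex $\cpx{\Delta}$ that is bounded, perfect over $A$ and over $B$, and finitely generated in each degree; such a functor visibly preserves boundedness, finiteness of cohomology and perfectness, which gives $(5)\Rightarrow(2),(3),(4)$ and closes the cycle.

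The main obstacle is exactly $(5)\Rightarrow(1)$: a tilting complex is only the formal data of a self-orthogonal compact generator with prescribed endomorphism algebra, and upgrading it to an honest triangulated equivalence — and, for the refinements, to a bounded bimodule complex — is the real content of the theorem. The dg-algebra phrasing makes this transparent, but at the price of invoking two nontrivial inputs from \cite{Ke94}: quasi-isomorphic dg algebras have equivalent derived categories, and a compact generator $\cpx{T}$ identifies $\D{\Modcat{A}}$ with the derived category of its dg endomorphism algebra. The remaining effort is the bookkeeping ensuring that a single equivalence restricts compatibly to all four categories in the statement, which rests on the homological intrinsicness of $\Kb{\pmodcat{A}}$ and $\Db{A}$ inside $\D{\Modcat{A}}$ mentioned above.
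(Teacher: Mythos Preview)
The paper does not give a proof of this theorem; it is quoted as a foundational result due to Rickard and Keller, with references to \cite{R89} and \cite{Ke94}, and then used as background. So there is no ``paper's own proof'' to compare your proposal against.

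That said, your outline is a faithful sketch of the standard argument as found in those references: extracting the tilting complex from any of the equivalences via the intrinsic characterisations of $\Kb{\pmodcat{A}}$ and $\Db{A}$, and for the hard direction $(5)\Rightarrow(1)$ passing through the dg endomorphism algebra and Keller's compact-generator theorem. One small caveat: Rickard's original paper \cite{R89} does \emph{not} construct a two-sided tilting complex $\cpx{\Delta}$; that came later in \cite{R91}. Rickard's 1989 argument builds the equivalence at the level of homotopy categories by a more hands-on construction, and the existence of a bimodule complex realising the equivalence is a separate (and harder) theorem. Your Keller-style argument is cleaner and does yield the standard equivalence directly, but if you want to attribute the bimodule version you should cite \cite{R91} rather than \cite{R89}.
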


 The complex  $\cpx{T}$ in (5)  is called a \emph{tilting complex}.

 For any derived equivalence from $\D{\Modcat{A}}$ to $\D{\Modcat{B}}$, the
 image of $A$ in $\D{\Modcat{B}}$ is a tilting complex, and so is the preimage of $B$ in $\D{\Modcat{A}}$.
 It is not known whether the equivalences in (1)-(4)  determine each other
 uniquely (see  \cite[Section 7]{R89}). To fix the ambiguity,
 Rickard \cite{R91} associated to each derived equivalence a \emph{standard derived
 equivalence}.  A complex $\cpx{\Delta}\in \Db{\Modcat{(B\otimes_kA\opp)}}$ is called a {\em two-sided tilting complex} if and only if
 $$\cpx{\Delta}\otimesL_A\cpx{\Theta}\cong {}_BB_B\quad \mbox{and}\quad\cpx{\Theta}\otimesL_B\cpx{\Delta}\cong {}_AA_A$$ for some complex $\cpx{\Theta}\in\Db{\Modcat{(A\otimes_kB\opp)}}$.  The complex $\cpx{\Theta}$ is called an 
{\em inverse} of $\cpx{\Delta}$. 
The functor $\cpx{\Delta}\otimesL_A-: \D{\Modcat{A}}\ra\D{\Modcat{B}}$ 
(respectively, $-\otimesL_B\cpx{\Delta}: 
\D{\Modcat{B\opp}}\ra\D{\Modcat{A\opp}}$) 
is a triangle equivalence with $\cpx{\Theta}\otimesL_B-$ (respectively, $-\otimesL_A\cpx{\Theta}$) as a quasi-inverse.  Such a derived equivalence is called a {\em standard derived equivalence}.
 It has been proved in \cite{R91} that each derived equivalence $\mathcal{F}: \Db{\Modcat{A}}\cong \Db{\Modcat{B}}$ induces a derived equivalence $\hat{\mathcal{F}}$ from $\Db{\Modcat{(B\otimes_k B\opp)}}$ to $\Db{\Modcat{(B\otimes_k A\opp)}}$, and the image $\cpx{\Delta}$ of ${}_BB_B$ under $\hat{\mathcal{F}}$ is a two-sided tilting complex such that $\cpx{\Delta}\otimesL_A\cpx{X}\cong \mathcal{F}(\cpx{X})$ for all $\cpx{X}\in \Db{\Modcat{A}}$.  It is not known whether $\mathcal{F}$ and $\cpx{\Delta}\otimesL_A-$ agree on morphisms.

 Two-sided tilting complexes can be used to provide further derived equivalences.  Let $\cpx{\Delta}$ be a two-sided tilting complex in $\Db{\Modcat{(B\otimes A\opp)}}$ with an inverse $\cpx{\Theta}$.  Then, for each algebra $C$, the functor $$\cpx{\Delta}\otimesL_A-: \D{\Modcat{(A\otimes_kC\opp)}}\ra\D{\Modcat{(B\otimes_kC\opp)}}$$ is a triangle equivalence with $\cpx{\Theta}\otimesL_B-$ as a quasi-inverse, and the functor
 \begin{align} \label{eqn:two sided standard derived equivalence}
    \check{\mathcal{F}}:=\cpx{\Delta}\otimesL_A -\otimesL_A \cpx{\Theta}: \D{\Modcat{A^\mathrm{e}}}
    \lra \D{\Modcat{B^{\mathrm{e}}}}
 \end{align}
 defines a derived equivalence with $\cpx{\Theta}\otimesL_B-\otimesL_B\cpx{\Delta}$ as a quasi-inverse, see \cite{R91} for more details.

 \vspace{-.2cm}
\subsection{Standard derived equivalences} \label{subsectionstandarddereq}

 Here are
 some properties of the standard derived equivalence $\check{\mathcal{F}}$,
 to be used later on.

 \begin{Lem} \label{lem:standard derived equivalence}
 Let $A$ and $B$ be derived equivalent $k$-algebras. Let $\check{\mathcal{F}}$ be a
 standard derived equivalence from $\D{\Modcat{A^{\mathrm{e}}}}$ to $\D{\Modcat{B^{\mathrm{e}}}}$
 defined by a two-sided tilting complex $\cpx{\Delta}$ in $\D{B\otimes A\opp}$ as defined above
 in (\ref{eqn:two sided standard derived equivalence}). Then
 for any complexes $\cpx{X}$ and $\cpx{Y}$ in $\D{\Modcat{A^{\mathrm{e}}}}$,
 there are isomorphisms in $\D{\Modcat{B^{\mathrm{e}}}}$
 \begin{itemize}
  \setlength\itemsep{-1pt}
   \item[\rm (1)] $\check{\mathcal{F}}(A)\cong B$, $\check{\mathcal{F}}(\DD(A))\cong \DD(B)$;
   \item[\rm (2)] $\check{\mathcal{F}}(\cpx{X}\otimesL_A\cpx{Y})\cong \check{\mathcal{F}}(\cpx{X})\otimesL_B\check{\mathcal{F}}(\cpx{Y})$;
   \item[\rm (3)] $\check{\mathcal{F}}\big(\RHom_A({_A}\cpx{X}, {_A}\cpx{Y})\big)\cong
   \RHom_B({_B}\check{\mathcal{F}}(\cpx{X}), {_B}\check{\mathcal{F}}(\cpx{Y}))$;
   \item[\rm (4)] $\check{\mathcal{F}}(\DD(\cpx{X})) \cong \DD(\check{\mathcal{F}}(\cpx{X}))$.
 \end{itemize}
\end{Lem}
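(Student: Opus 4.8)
The plan is to prove the four isomorphisms in turn, exploiting that $\check{\mathcal F} = \cpx{\Delta}\otimesL_A -\otimesL_A\cpx{\Theta}$ is defined by explicit two-sided tilting complexes and that all the standard functorial identities (associativity and commutativity of $\otimesL$, tensor-hom adjunction, Spaltenstein's treatment of complexes of bimodules) are available from Rickard's \cite{R91}. First I would treat (2): since $\cpx{X}$, $\cpx{Y}$ lie in $\D{\Modcat{A^{\mathrm e}}}$, one has $\cpx{X}\otimesL_A\cpx{Y}\in\D{\Modcat{A^{\mathrm e}}}$ with left and right $A$-actions coming from the outer factors, and
\[
  \check{\mathcal F}(\cpx{X}\otimesL_A\cpx{Y})
  = \cpx{\Delta}\otimesL_A\cpx{X}\otimesL_A\cpx{Y}\otimesL_A\cpx{\Theta}.
\]
Inserting $\cpx{\Theta}\otimesL_B\cpx{\Delta}\cong{}_AA_A$ between $\cpx{X}$ and $\cpx{Y}$ and reassociating gives
\[
  \cpx{\Delta}\otimesL_A\cpx{X}\otimesL_A\cpx{\Theta}\otimesL_B\cpx{\Delta}\otimesL_A\cpx{Y}\otimesL_A\cpx{\Theta}
  = \check{\mathcal F}(\cpx{X})\otimesL_B\check{\mathcal F}(\cpx{Y}),
\]
which is (2). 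The only care needed is that all these associativity isomorphisms are isomorphisms of $B$-bimodule complexes, not merely of underlying complexes; this is routine once one tracks the module actions, and is exactly the setting of the derived equivalence $\check{\mathcal F}$ between enveloping algebras recalled just before the lemma.

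Next I would do (1). The isomorphism $\check{\mathcal F}(A)\cong B$ is immediate: $\cpx{\Delta}\otimesL_A A\otimesL_A\cpx{\Theta}\cong\cpx{\Delta}\otimesL_A\cpx{\Theta}\cong{}_BB_B$. For $\check{\mathcal F}(\DD(A))\cong\DD(B)$ I would use that $\DD(A)\cong\RHom_k({}_AA_A,k)$ and that, over a field, $k$-duality and $\RHom$ over $A^{\mathrm e}$ interact with $\otimesL$ in the standard way; alternatively, and more cleanly, $\DD(A)$ is characterised in $\D{\Modcat{A^{\mathrm e}}}$ up to isomorphism as $\RHom_A({}_AA,{}_AA)$ twisted by the dualising object, so (1) for $\DD$ will drop out of (3) applied with $\cpx{X}=\cpx{Y}=A$ together with (4); to avoid circularity I would instead establish (4) first for $\cpx{X}=A$ by a direct computation with $\cpx{\Delta}$ and $\cpx{\Theta}$, and note that $\DD$ of a bimodule complex is computed by applying $\Hom_k(-,k)$ componentwise, which commutes past $\otimesL_A$ over the base field $k$.

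For (3), the plan is to combine (4) with the fact that over a field $\RHom_A(\cpx{X},\cpx{Y})\cong\DD\big(\cpx{X}\otimesL_A\DD(\cpx{Y})\big)$ when $\cpx{Y}$ is suitably finite (this is the standard finite-dimensionality trick), or, more robustly, to argue directly: $\RHom_A(\cpx{X},\cpx{Y})$ as an object of $\D{\Modcat{A^{\mathrm e}}}$ can be rewritten using that $\cpx{X}\mapsto\cpx{\Theta}\otimesL_B\cpx{\Delta}\otimesL_A\cpx{X}$ is the identity, so
\[
  \RHom_A(\cpx{X},\cpx{Y})\cong\RHom_B(\cpx{\Delta}\otimesL_A\cpx{X},\cpx{\Delta}\otimesL_A\cpx{Y})
\]
because $\cpx{\Delta}\otimesL_A-$ is a derived equivalence $\D{\Modcat A}\to\D{\Modcat B}$; then tracking the right $A$-actions and conjugating by $\cpx{\Theta}$ on the right converts this into $\RHom_B({}_B\check{\mathcal F}(\cpx{X}),{}_B\check{\mathcal F}(\cpx{Y}))$. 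Finally (4) follows by applying (3) with $\cpx{Y}=A$ (using $\DD(\cpx{X})\cong\RHom_A(\cpx{X},\DD(A))$) together with $\check{\mathcal F}(\DD(A))\cong\DD(B)$ from (1). I expect the main obstacle to be bookkeeping: every object here carries two commuting actions, and the genuine content is that each standard isomorphism (associativity of $\otimesL$, tensor-hom adjunction, componentwise $k$-duality) respects the outer $B$-bimodule structure after conjugation by $\cpx{\Delta}$ and $\cpx{\Theta}$; once one fixes the order in which (1)–(4) are proved so as to avoid circular dependence (I would do $\check{\mathcal F}(A)\cong B$, then (4) for $\cpx X=A$, then (2), then general (4)/(3), then $\check{\mathcal F}(\DD(A))\cong\DD(B)$), the remaining steps are the expected manipulations of derived tensor products over the field $k$ and over $A$, $B$.
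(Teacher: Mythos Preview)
Your overall strategy matches the paper's: (2) is exactly the insertion of $\cpx{\Theta}\otimesL_B\cpx{\Delta}\cong{}_AA_A$ (the paper simply cites \cite[Proposition~5.2]{R91}), the first half of (1) is immediate, and (4) is deduced from (3) together with $\check{\mathcal F}(\DD(A))\cong\DD(B)$ via $\DD(\cpx{X})\cong\RHom_A(\cpx{X},\DD(A))$. So the architecture is the same, and your worry about ordering is one the paper shares---it too uses $\check{\mathcal F}(\DD(A))\cong\DD(B)$ as input to (4) without spelling out a separate argument for it.

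Where your sketch falls short is the justification of (3). You write that
\[
\RHom_A(\cpx{X},\cpx{Y})\cong\RHom_B(\cpx{\Delta}\otimesL_A\cpx{X},\cpx{\Delta}\otimesL_A\cpx{Y})
\]
holds ``because $\cpx{\Delta}\otimesL_A-$ is a derived equivalence''. An equivalence gives bijections on $\Hom$-\emph{sets}, but what is being asserted here is an isomorphism of \emph{objects} in $\D{\Modcat{A^{\mathrm e}}}$ (internal $\RHom$ with its bimodule structure), and that does not follow formally from being an equivalence. The paper instead argues by a chain of explicit tensor--hom adjunctions, and the two genuinely contentful ingredients are: first, the right adjoint $\RHom_B(\cpx{\Delta},-)$ of $\cpx{\Delta}\otimesL_A-$ is naturally isomorphic to the quasi-inverse $\cpx{\Theta}\otimesL_B-$ (and symmetrically $\RHom_A(\cpx{\Theta},-)\cong\cpx{\Delta}\otimesL_A-$); second, the step
\[
\RHom_A(\cpx{X},\cpx{Y}\otimesL_A\cpx{\Theta})\cong\RHom_A(\cpx{X},\cpx{Y})\otimesL_A\cpx{\Theta},
\]
which is where perfectness of ${}_A\cpx{\Theta}\in\Kb{\pmodcat{A}}$ is actually used. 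Your plan does not isolate either of these, and in particular never invokes perfectness of $\cpx{\Theta}$, which is what lets one extract the right-hand $\cpx{\Theta}$ from inside the $\RHom$ and thereby recognise the result as $\check{\mathcal F}(\RHom_A(\cpx{X},\cpx{Y}))$. Once you insert these two steps, your argument for (3) becomes the paper's.
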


\begin{proof}
 (1) follows from the isomorphism $\cpx{\Delta}\otimesL_AA\otimesL_A\cpx{\Theta}\cong B$ and  (2) follows from \cite[Proposition 5.2]{R91}.
 To show (3) and (4), note that the derived functor $\RHom_{B}(\cpx{\Delta},-)$
 from $\D{\Modcat{(B\otimes A\opp)}}$ to $\D{\Modcat{A^{\mathrm e}}}$ is right adjoint to
 the derived functor
 $\cpx{\Delta}\otimesL_A-$ (see \cite{Mi03,Sp85}). Thus it is naturally isomorphic to
 the derived functor $\cpx{\Theta}\otimesL_B-$. Similarly, the two derived functors $\RHom_{A}(\cpx{\Theta},-)$ and $\cpx{\Delta}\otimesL_A-$
 are naturally isomorphic. (3) then follows by a series of isomorphisms in $\D{\Modcat{B^{\mathrm e}}}$:
 \begin{align*}
   \RHom_{B}({_B}\check{\mathcal{F}}(\cpx{X}), {_B}\check{\mathcal{F}}(\cpx{Y})) &
   = \RHom_{B}(\cpx{\Delta}\otimesL_A\cpx{X}\otimesL_A\cpx{\Theta}, \cpx{\Delta}\otimesL_A\cpx{Y}\otimesL_A\cpx{\Theta})\\
   & \stackrel{(\ast)}{\cong} \RHom_{A}(\cpx{X}\otimesL_A\cpx{\Theta}, \RHom_{B} (\cpx{\Delta}, \cpx{\Delta}\otimesL_A\cpx{Y}\otimesL_A\cpx{\Theta}))
   \\
   & \cong \RHom_{A}(\cpx{X}\otimesL_A\cpx{\Theta}, \cpx{\Theta}\otimesL_B\cpx{\Delta}\otimesL_A\cpx{Y}\otimesL_A\cpx{\Theta})
   \\
   & \cong \RHom_{A}(\cpx{X}\otimesL_A\cpx{\Theta}, \cpx{Y}\otimesL_A\cpx{\Theta})\\
   & \stackrel{(\ast)}{\cong} \RHom_{A}(\cpx{\Theta},  \RHom_{A}(\cpx{X}, \cpx{Y}\otimesL_A\cpx{\Theta}))\\
   & \cong \cpx{\Delta}\otimesL_A\RHom_{A}(\cpx{X}, \cpx{Y}\otimesL_A\cpx{\Theta})\\
   & \stackrel{(\dagger)}{\cong} \cpx{\Delta}\otimesL_A\RHom_{A}({_A}\cpx{X}, {_A}\cpx{Y})\otimesL_A\cpx{\Theta} \\
   & = \check{\mathcal{F}}(\RHom_{A}({_A}\cpx{X}, {_A}\cpx{Y})).
 \end{align*}
 Here the isomorphisms marked by $(\ast)$ follow by tensor-hom adjointness, and
 the isomorphism marked by $(\dagger)$
 follows from ${_A}\cpx{\Theta}\in \Kb{\pmodcat{A}}$.

 To prove (4), observe that $\DD(\cpx{X})\cong \RHom_{A}(\cpx{X}, \DD(A))$
 in $\D{\Modcat{A^\mathrm{e}}}$. Thus by (1) and (3)
 \begin{align*}
    \check{\mathcal{F}}(\DD(\cpx{X})) \cong \check{\mathcal{F}}(\RHom_{A}({_A}\cpx{X},{_A}\DD(A)))
    &\cong \RHom_{B}({_B}\check{\mathcal{F}}(X),{_B}\check{\mathcal{F}}(\DD(A))) \\
    &\cong \RHom_{B}({_B}\check{\mathcal{F}}(X),{_B}\DD(B))\cong \DD(\check{\mathcal{F}}(\cpx{X})),
 \end{align*}
 in $\D{\Modcat{B^{\mathrm{e}}}}$.
 \end{proof}

\begin{Lem}
Let ${\mathcal{T}}$ be a triangulated category, and let $\xi_i: X_i\ra Y\raf{f_i} Z\ra X_i[1], i=1, 2$ be triangles in ${\mathcal{T}}$.  If one of the following conditions is satisfied

$(1)$  $\Hom_{\mathcal{T}}(Y, X_i)=0=\Hom_{\mathcal{T}}(Y, X_i[1])$ for $i=1, 2$;

$(2)$ $\Hom_{\mathcal{T}}(X_i, Z)=0=\Hom_{\mathcal{T}}(X_i[1], Z)$ for $i=1, 2$,

\noindent
then $\xi_1$ and $\xi_2$ are isomorphic.
\label{lemma-isomorphic-triangles}
\end{Lem}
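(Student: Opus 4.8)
The plan is to show that in either case the third vertex $Z$ (or, dually, the first vertex $X_i$) together with the map into or out of it is determined up to isomorphism by a universal property, so that the two triangles are forced to agree. I would treat case $(1)$ in detail and obtain case $(2)$ by applying case $(1)$ in the opposite category ${\mathcal{T}}\opp$, where triangles get reversed and the Hom-vanishing hypotheses swap roles accordingly.

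So assume $(1)$: $\Hom_{\mathcal{T}}(Y, X_i)=0=\Hom_{\mathcal{T}}(Y, X_i[1])$ for $i=1,2$. First I would apply the cohomological functor $\Hom_{\mathcal{T}}(Y,-)$ to the triangle $\xi_1$ and use the two vanishing conditions to conclude that $f_1\colon Y\to Z$ induces an isomorphism $\Hom_{\mathcal{T}}(Y,Y)\xrightarrow{\ \sim\ }\Hom_{\mathcal{T}}(Y,Z)$ (the neighbouring terms $\Hom_{\mathcal{T}}(Y,X_1)$ and $\Hom_{\mathcal{T}}(Y,X_1[1])$ both vanish, so the long exact sequence collapses). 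Hence there is a unique morphism $g\colon Z\to Z$ (necessarily the image of $\mathrm{id}_Y$, i.e.\ $g$ with $f_1 g = f_1$) — wait, more precisely: since $\Hom_{\mathcal{T}}(Y,-)$ applied to $f_1$ is bijective, the class $f_2\in\Hom_{\mathcal{T}}(Y,Z)$ has a unique preimage along $f_1$, which is forced to be a scalar multiple issue; rather, I would argue directly that there is a unique morphism $h\colon Z\to Z$ with $f_1\cdot h = f_2$. Running the same argument with the roles of $\xi_1,\xi_2$ exchanged gives a unique $h'\colon Z\to Z$ with $f_2\cdot h' = f_1$. Then $f_1\cdot(h h') = f_2\cdot h' = f_1$, and by uniqueness of the preimage of $f_1$ along $f_1$ (again using bijectivity of $\Hom_{\mathcal{T}}(Y,f_1)$) we get $h h' = \mathrm{id}_Z$; symmetrically $h' h = \mathrm{id}_Z$. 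So $h$ is an isomorphism with $f_1 h = f_2$, i.e.\ we have a commuting square with identity on $Y$ and the isomorphism $h$ on $Z$.

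Next I would complete this partial morphism of triangles to a full isomorphism. By the axioms of a triangulated category, a commutative square
\begin{equation*}
\begin{array}{ccc}
Y & \xrightarrow{f_1} & Z\\
\| & & \downarrow h\\
Y & \xrightarrow{f_2} & Z
\end{array}
\end{equation*}
extends to a morphism of triangles $(\mathrm{id}_Y, h, e)$ from $\xi_1$ to $\xi_2$ for some $e\colon X_1\to X_2$; and since $\mathrm{id}_Y$ and $h$ are isomorphisms, the five lemma for triangulated categories forces $e$ to be an isomorphism as well. Thus $\xi_1\cong\xi_2$. Finally, case $(2)$ follows by dualising: in ${\mathcal{T}}\opp$ the triangle $X_i\to Y\to Z\to X_i[1]$ becomes (after the standard rotation/sign bookkeeping) a triangle with first vertex $Z$, middle vertex $Y$, third vertex $X_i$, and the hypothesis $\Hom_{\mathcal{T}}(X_i,Z)=0=\Hom_{\mathcal{T}}(X_i[1],Z)$ translates precisely into the hypothesis of case $(1)$ for ${\mathcal{T}}\opp$; an isomorphism of triangles in ${\mathcal{T}}\opp$ is an isomorphism of triangles in ${\mathcal{T}}$.

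The main obstacle is the bookkeeping in the first step: one must be slightly careful that "the morphism $h\colon Z\to Z$ with $f_1 h = f_2$" genuinely exists and is unique, rather than merely that $f_2$ lies in the image of $\Hom_{\mathcal{T}}(Y,f_1)$ — the point is that $\Hom_{\mathcal{T}}(Y,f_1)$ is not just surjective but bijective, which gives uniqueness and hence the two-sided inverse identity $h h' = \mathrm{id}_Z$. (Equivalently, one can phrase the whole argument as saying that $f_1$ realises $Z$ as a "localisation" of $Y$ along the orthogonality class of $\{X_1,X_2\}$, but the elementary long-exact-sequence argument above is cleaner.) Everything else is a routine application of the long exact Hom-sequence and the five lemma for triangulated categories.
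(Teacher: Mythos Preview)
Your overall strategy is the same as the paper's, but there is a genuine slip in where the isomorphism lives, and it does break the argument as written.

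Under condition $(1)$, the vanishing $\Hom_{\mathcal T}(Y,X_1)=0=\Hom_{\mathcal T}(Y,X_1[1])$ makes the map
\[
\Hom_{\mathcal T}(Y,f_1)\colon \Hom_{\mathcal T}(Y,Y)\longrightarrow \Hom_{\mathcal T}(Y,Z),\qquad \alpha\longmapsto \alpha\cdot f_1,
\]
bijective. The unique preimage of $f_2$ under this map is therefore an endomorphism $h\in\End_{\mathcal T}(Y)$ satisfying $h\cdot f_1=f_2$, \emph{not} an endomorphism of $Z$. Your claim that there is a unique $h\colon Z\to Z$ with $f_1\cdot h=f_2$ would require instead that the \emph{contravariant} map $\Hom_{\mathcal T}(f_1,Z)\colon\End_{\mathcal T}(Z)\to\Hom_{\mathcal T}(Y,Z)$ be bijective; that comes from applying $\Hom_{\mathcal T}(-,Z)$ to $\xi_1$ and would need $\Hom_{\mathcal T}(X_1,Z)=0=\Hom_{\mathcal T}(X_1[1],Z)$, which is condition $(2)$, not $(1)$. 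Consequently neither the existence of your $h\in\End_{\mathcal T}(Z)$ nor the uniqueness step ``$hh'=\mathrm{id}_Z$'' is justified under $(1)$.

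The fix is exactly what the paper does: place the isomorphism on $Y$ rather than on $Z$. From the bijections $\Hom_{\mathcal T}(Y,f_i)$ one obtains $h,h'\in\End_{\mathcal T}(Y)$ with $f_1=h\cdot f_2$ and $f_2=h'\cdot f_1$; uniqueness then forces $hh'=1_Y=h'h$, and the commuting square
\[
\xymatrix{
Y\ar[r]^{f_1}\ar[d]_{h} & Z\ar@{=}[d]\\
Y\ar[r]^{f_2} & Z
}
\]
extends (by TR3 and the triangulated five lemma) to an isomorphism $\xi_1\cong\xi_2$. Your treatment of case $(2)$ by passing to $\mathcal T\opp$ is fine, though the paper simply says the dual argument is analogous.
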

\begin{proof}
  Assume that condition (1) is satisfied. Applying $\Hom_{\mathcal{T}}(Y, -)$ to $\xi_i$ ($i=1, 2$) yields isomorphisms $\Hom_{\mathcal{T}}(Y, f_i): \Hom_{\mathcal{T}}(Y, Y)\ra \Hom_{\mathcal{T}}(Y, Z)$. This means that each morphism 
\mbox{$g: Y\ra Z$} factorises uniquely through $f_i$.
  In particular, $f_1=hf_2$  and $f_2=h'f_1$ for some 
\mbox{$h, h'\in\End_{\mathcal{T}}(Y)$}. 
It follows that $f_1=hh'f_1$ and $f_2=h'hf_2$, and thus by uniqueness,
 $hh'=1_{Y}=h'h$.  Hence $h: Y\ra Y$ is an isomorphism, and the commutative diagram
$$\xymatrix{
Y\ar[r]^{f_1}\ar[d]_{h} & Z\ar@{=}[d]\\
Y\ar[r]^{f_2}  & Z
}$$
extends to an isomorphism between $\xi_1$ and $\xi_2$.
The proof is similar when assuming (2).
\end{proof}

 Derived equivalences, by definition, preserve all triangles,
 especially the following type. Let $e=e^2$ be an idempotent in
 the $k$-algebra $A$. There are canonical
 triangles associated to $e$ in $\D{\Modcat{A^{\mathrm e}}}$:
 \begin{align*}
\mathbf{(CT1)} \, \, \,  \xi_e^A& :\quad  U_A(e)\lra Ae\otimesL_{eAe}eA\lraf{\pi_e}A\lra U_A(e)[1] \\
\mathbf{(CT2)} \, \, \,  \eta_e^A &:\quad  A \lraf{\rho_e} \RHom_{eAe}(eA,eA) \lra V_A(e) \lra A[1]
 \end{align*}
 where $\pi_e: Ae\otimesL_{eAe} eA\to A$ is induced by the multiplication map
 $Ae\otimes_{eAe} eA\to A$, and $\rho_e$ is induced by the canonical morphism
 $A\to \End_{eAe}(eA)$.
 The triangle $\xi_e^A$ plays a crucial role in the analysis of recollements of
 derived categories \cite{Mi03}.
 The following result implies that the property of being a canonical
 triangle is preserved under certain derived equivalences.

\begin{Lem}\label{lem:derived-equivalence-preserves-canonical-triangle}
 Let $A$ and $B$ be derived equivalent $k$-algebras, and ${}_B\cpx{\Delta}_A$
 a two-sided tilting complex with inverse ${}_A\cpx{\Theta}_B$.
 Let $e$ and $f$ be idempotents in $A$ and $B$ respectively.
 Assume that the standard derived equivalence $\cpx{\Delta}\otimesL_A-:
 \D{\Modcat{A}}\ra \D{\Modcat{B}}$ restricts to a triangle equivalence
 $\Kb{\add({}_AAe)}\cong \Kb{\add({}_BBf)}$.
 Then
 $\cpx{\Delta}\otimesL_A\xi_e^A\otimesL_A\cpx{\Theta}\cong \xi_f^B$
 and $\cpx{\Delta}\otimesL_A \eta_e^A\otimesL_A\cpx{\Theta}\cong \eta_f^B$
 as triangles in $\D{\Modcat{B^{\mathrm e}}}$.
\end{Lem}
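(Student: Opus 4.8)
The plan is to push the two canonical triangles through the standard self-equivalence $\check{\mathcal{F}}:=\cpx{\Delta}\otimesL_A-\otimesL_A\cpx{\Theta}\colon\D{\Modcat{A^{\mathrm{e}}}}\to\D{\Modcat{B^{\mathrm{e}}}}$ and then to recognise the outcome by uniqueness of triangles (Lemma \ref{lemma-isomorphic-triangles}). Applying the triangulated functor $\check{\mathcal{F}}$ to $\xi_e^A$ produces a triangle $\check{\mathcal{F}}(U_A(e))\to\check{\mathcal{F}}(Ae\otimesL_{eAe}eA)\to\check{\mathcal{F}}(A)\to\check{\mathcal{F}}(U_A(e))[1]$ in $\D{\Modcat{B^{\mathrm{e}}}}$, with $\check{\mathcal{F}}(A)\cong B$ by Lemma \ref{lem:standard derived equivalence}(1). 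Once I have shown $\check{\mathcal{F}}(Ae\otimesL_{eAe}eA)\cong Bf\otimesL_{fBf}fB$, this triangle and $\xi_f^B$ will share both their middle term $Bf\otimesL_{fBf}fB$ and their third term $B$, so it will only remain to verify a $\Hom$-vanishing condition allowing Lemma \ref{lemma-isomorphic-triangles} to be invoked.

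The heart of the matter --- and the step I expect to cost the most work --- is the bimodule isomorphism $\check{\mathcal{F}}(Ae\otimesL_{eAe}eA)\cong Bf\otimesL_{fBf}fB$ in $\D{\Modcat{B^{\mathrm{e}}}}$. Tensoring $\xi_f^B$ with $Bf$, resp.\ $fB$, over $B$ gives $U_B(f)\otimesL_B Bf=0$ and $fB\otimesL_B U_B(f)=0$, hence by dévissage $\cpx{N}\cong Bf\otimesL_{fBf}(fB\otimesL_B\cpx{N})$ for every left $B$-complex $\cpx{N}\in\Kb{\add({}_BBf)}$ and $\cpx{M}\cong(\cpx{M}\otimesL_B Bf)\otimesL_{fBf}fB$ for every right $B$-complex $\cpx{M}\in\Kb{\add(fB_B)}$. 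By hypothesis $\cpx{\Delta}\otimesL_A Ae\in\Kb{\add({}_BBf)}$; and, using $eA\cong\RHom_A(Ae,A)$, perfectness of $Ae$ over $A$, the standard identity $\cpx{\Theta}\cong\RHom_B({}_B\cpx{\Delta},{}_BB)$ and adjunction, $eA\otimesL_A\cpx{\Theta}\cong\RHom_B(\cpx{\Delta}\otimesL_A Ae,\,B)\in\Kb{\add(fB_B)}$. Applying the two splittings to the left, resp.\ right, tensor factor of $\check{\mathcal{F}}(Ae\otimesL_{eAe}eA)=(\cpx{\Delta}\otimesL_A Ae)\otimesL_{eAe}(eA\otimesL_A\cpx{\Theta})$ yields $\check{\mathcal{F}}(Ae\otimesL_{eAe}eA)\cong Bf\otimesL_{fBf}\big(fB\otimesL_B\check{\mathcal{F}}(Ae\otimesL_{eAe}eA)\otimesL_B Bf\big)\otimesL_{fBf}fB$. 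It remains to compute the inner term: applying $fB\otimesL_B(-)\otimesL_B Bf$ to the triangle $\check{\mathcal{F}}(\xi_e^A)$ and using that $\check{\mathcal{F}}(U_A(e))\otimesL_B Bf=\cpx{\Delta}\otimesL_A\big(U_A(e)\otimesL_A(\cpx{\Theta}\otimesL_B Bf)\big)=0$ --- because the quasi-inverse $\cpx{\Theta}\otimesL_B-$ of the restricted equivalence sends $\Kb{\add({}_BBf)}$ into $\Kb{\add({}_AAe)}$ while $U_A(e)\otimesL_A Ae=0$ (tensor $\xi_e^A$ with $Ae$ over $A$, using $eA\otimesL_A Ae\cong eAe$) --- one obtains $fB\otimesL_B\check{\mathcal{F}}(Ae\otimesL_{eAe}eA)\otimesL_B Bf\cong fB\otimesL_B B\otimesL_B Bf\cong fBf$. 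Hence $\check{\mathcal{F}}(Ae\otimesL_{eAe}eA)\cong Bf\otimesL_{fBf}fBf\otimesL_{fBf}fB\cong Bf\otimesL_{fBf}fB$.

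To complete the $\xi$-case, I would use the adjunction $\RHom_{B^{\mathrm{e}}}(Bf\otimesL_{fBf}fB,\cpx{X})\cong\RHom_{(fBf)^{\mathrm{e}}}\big(fBf,\,fB\otimesL_B\cpx{X}\otimesL_B Bf\big)$ (the derived form of $\Hom_{B^{\mathrm{e}}}(Bf\otimes_{fBf}fB,X)\cong\Hom_{(fBf)^{\mathrm{e}}}(fBf,fXf)$). Taking $\cpx{X}=U_B(f)$ (then $fB\otimesL_B\cpx{X}=0$) and $\cpx{X}=\check{\mathcal{F}}(U_A(e))$ (then $\cpx{X}\otimesL_B Bf=0$, as shown above) gives $\Hom_{\D{\Modcat{B^{\mathrm{e}}}}}(Bf\otimesL_{fBf}fB,\cpx{X}[n])=0$ for all $n$, so condition $(1)$ of Lemma \ref{lemma-isomorphic-triangles} holds for the two triangles above, and $\cpx{\Delta}\otimesL_A\xi_e^A\otimesL_A\cpx{\Theta}\cong\xi_f^B$. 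For the triangle $\eta$, rather than redoing the argument I would observe that $\eta_e^A$ is, up to rotation, the image of $\xi_e^A$ under $\RHom_A(-,{}_AA_A)$: indeed $\RHom_A(A,A)\cong A$ and $\RHom_A(Ae\otimesL_{eAe}eA,A)\cong\RHom_{eAe}(eA,eA)$ carry $\pi_e$ to $\rho_e$, so $V_A(e)\cong\RHom_A(U_A(e),A)$, and similarly over $B$. Since the isomorphism of Lemma \ref{lem:standard derived equivalence}(3) is natural in its first argument (its proof being a chain of adjunction isomorphisms), $\check{\mathcal{F}}\circ\RHom_A(-,A)$ and $\RHom_B(-,B)\circ\check{\mathcal{F}}$ are naturally isomorphic functors; applying $\RHom_B(-,B)$ to the isomorphism of triangles $\check{\mathcal{F}}(\xi_e^A)\cong\xi_f^B$ just obtained then gives $\cpx{\Delta}\otimesL_A\eta_e^A\otimesL_A\cpx{\Theta}\cong\eta_f^B$, as required.
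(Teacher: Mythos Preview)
Your proposal is correct and follows essentially the same strategy as the paper: push $\xi_e^A$ through $\check{\mathcal F}$, identify the two non-cone vertices with those of $\xi_f^B$, use the adjunction $\RHom_{B^{\mathrm e}}(Bf\otimesL_{fBf}fB,-)\cong\RHom_{(fBf)^{\mathrm e}}(fBf,f(-)f)$ together with Lemma~\ref{lemma-isomorphic-triangles} to match the triangles, and then obtain the $\eta$-statement from the $\xi$-statement via $\RHom_A(-,A)$ and Lemma~\ref{lem:standard derived equivalence}(3).

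Two tactical differences are worth noting. For the key isomorphism $\check{\mathcal F}(Ae\otimesL_{eAe}eA)\cong Bf\otimesL_{fBf}fB$, the paper proceeds more directly: it first records that $\cpx{\Delta}e\in\Kb{\add({}_BBf)}$, $e\cpx{\Theta}\in\Kb{\add(fB_B)}$ and $f\cpx{\Delta}e\otimesL_{eAe}e\cpx{\Theta}f\cong fBf$, and then writes $\cpx{\Delta}e\otimesL_{eAe}e\cpx{\Theta}\cong Bf\otimesL_{fBf}(f\cpx{\Delta}e\otimesL_{eAe}e\cpx{\Theta}f)\otimesL_{fBf}fB\cong Bf\otimesL_{fBf}fB$; your route through the triangle $\check{\mathcal F}(\xi_e^A)$ to compute the inner factor gives the same answer but is a step longer. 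For the identification $\eta_e^A\cong\RHom_A(\xi_e^A,A)$, you assert that the adjunction carries $\pi_e$ to $\rho_e$ (true, and easy to check), whereas the paper avoids tracking the map by invoking Lemma~\ref{lemma-isomorphic-triangles} a second time, using $eV_A(e)=0$ and $e\,\RHom_A(U_A(e),A)=0$. Either way the argument goes through.
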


\begin{proof}
 Since $eA\otimes_A-: \add{({_A}Ae)}\to\pmodcat{eAe}$ is an equivalence of additive categories
 with a quasi-inverse $Ae\otimes_{eAe}-: \pmodcat{eAe}\to \add{({_A}Ae)}$,
 it follows that $eA\otimesL_A-: \Kb{\add({}_AAe)}\to \Kb{\pmodcat{eAe}}$ is an equivalence
 of triangulated categories with a quasi-inverse $Ae\otimesL_{eAe}- $.
 Similarly, the derived functor $fB\otimesL_B-$ defines an equivalence
 from $\Kb{\add({}_{B}Bf)}$
 to $\Kb{\pmodcat{fBf}}$ with a quasi-inverse $Bf\otimesL_{fBf}-$.
 Note that $\cpx{\Theta}\cong \RHom_B(\cpx{\Delta}, B)$ and 
\mbox{$\cpx{\Delta}\cong \RHom_A(\cpx{\Theta}, A)$} (see \cite{R91}).

\smallskip
 {\it Claim.}

$\begin{array}{lll}
\phantom{xxxxx} &(\mathrm a) \, \cpx{\Delta}e\in \Kb{\add({}_BBf)},  &\qquad \cpx{\Theta}f \in \Kb{\add({}_AAe)}, \\
\phantom{xxxxx} & (\mathrm b) \, f\cpx{\Delta}\in \Kb{\add(eA_A)}, & \qquad e\cpx{\Theta}\in \Kb{\add(fB_B)}, \\
 \phantom{xxxxx} &(\mathrm c) \, f\cpx{\Delta}e\otimesL_{eAe}e\cpx{\Theta}f\cong fBf,  &\qquad e\cpx{\Theta}f\otimesL_{fBf}f\cpx{\Delta}e\cong eAe.
\end{array}$

\medskip
 \noindent{\em Proof of claim.}
 By assumption, $\cpx{\Delta}e\cong \cpx{\Delta}\otimesL_AAe$ belongs to $\Kb{\add({}_BBf)}$. Similarly \mbox{$\cpx{\Theta}f\in \Kb{\add({}_AAe)}$}, hence (a).

 There are the following isomorphisms
 $$f\cpx{\Delta}\cong \RHom_B(Bf, \cpx{\Delta})\cong \RHom_B(Bf, \RHom_A(\cpx{\Theta}, A))\cong\RHom_A(\cpx{\Theta}f, A).$$
 It follows that $f\cpx{\Delta}\in\Kb{\add(eA_A)}$.  By symmetry, also $e\cpx{\Theta}\in \Kb{\add(fB_B)}$, hence (b).

 To prove the first statement in (c), consider the isomorphisms
 $$f\cpx{\Delta}e\otimesL_{eAe}e\cpx{\Theta}f\cong f\cpx{\Delta}\otimesL_AAe
 \otimesL_{eAe}eA\otimesL_A\cpx{\Theta}f\cong fB\otimesL_B\cpx{\Delta}\otimesL_A
 \cpx{\Theta}\otimesL_BBf\cong fBf$$
 which use
 $eA\otimesL_A-: \Kb{\add({}_AAe)}\to \Kb{\pmodcat{eAe}}$ being an equivalence
 with quasi-inverse $Ae\otimesL_{eAe}- $ and
 $\cpx{\Delta}\otimesL_A\cpx{\Theta}\cong {}_BB_B$.
 The second statement in (c) follows similarly.
\bigskip

 Now, the isomorphisms (the second one using (a) and (c))
 \begin{align*}
   \cpx{\Delta}\otimesL_A Ae\otimesL_{eAe}eA\otimesL_A \cpx{\Theta} & \cong\cpx{\Delta}e\otimesL_{eAe}e\cpx{\Theta} \\   &  \cong Bf\otimesL_{fBf}f\cpx{\Delta}e\otimesL_{eAe} e\cpx{\Theta}f\otimesL_{fBf}fB\\ & \cong Bf\otimesL_{fBf}fB
 \end{align*}
 combined with Lemma \ref{lem:standard derived equivalence} (1) yield
 that $\cpx{\Delta}\otimesL_A \xi_e^A \otimesL_A \cpx{\Theta}$ is a
 triangle of the following form
 \[
    \delta_f^B :\qquad U_B(f)' \lra Bf\otimesL_{fBf} fB \lraf{\epsilon} B \lra U_B(f)'[1]
 \]
 in $\D{\Modcat{B^{\mathrm{e}}}}$, where $U_B(f)' = \cpx{\Delta}\otimesL_A U_A(e)\otimesL_A\cpx{\Theta}$. So, the triangles $\xi_f^B$ and $\delta_f^B$ have at
 least two terms in common. To identify them as triangles, note
 that $eU_A(e)=0$ and
 \mbox{$fU_B(f)' = f\cpx{\Delta}\otimesL_AU_A(e)\otimesL_A\cpx{\Theta}=0$} 
since $f\cpx{\Delta}$
 belongs to $\Kb{\add(eA_A)}$ by (b).
 Then
 $$\Hom_{\D{\Modcat{B^{\mathrm{e}}}}}(Bf\otimesL_{fBf}fB, U_B(f)[i])=0=
 \Hom_{\D{\Modcat{B^{\mathrm{e}}}}}(Bf\otimesL_{fBf}fB, U_B(f)'[i]),   \forall i\in \mathbb{Z}.
 $$
 Here, the vanishing follows by adjointness.
 Thus $\xi_f^B$ and $\delta_f^B$ are isomorphic
 triangles in $\D{\Modcat{B^{\mathrm{e}}}}$
 by Lemma \ref{lemma-isomorphic-triangles}. So,  $\xi_f^B$ is as claimed.

\medskip
 It remains to check the claim about $\eta_f^B$.

\medskip
 {\it Claim.} The canonical triangle $\eta_e^A$ is isomorphic to
 $\RHom_A(\xi_e^A, A)$.

\medskip
\noindent {\it Proof of claim.} Applying $\RHom_A(-, A)$ to $\xi_e^A$
 results in a triangle
 $$\eta': A\lra \RHom_A(Ae\otimesL_{eAe}eA, A)\lra \RHom_A(U_A(e), A)\lra A[1], $$
 and $\RHom_A(Ae\otimesL_{eAe}eA, A)\cong \RHom_{eAe}(eA, eA)$ by adjointness.
 Let $V':=\RHom_A(U(e), A)$. By definition, $U_A(e)e=0$. Using
 adjointness again, this implies $\RHom_A(Ae, V')=0$,  that is,  $eV'=0$.
 Similarly, $eV_A(e)=0$. It follows that
 $$\Hom_{\D{\Modcat{A^{\rm e}}}}(V'[i], \RHom_{eAe}(eA, eA))=0=\Hom_{\D{\Modcat{A^{\rm e}}}}(V_A(e)[i], \RHom_{eAe}(eA, eA))$$
 for all $i\in\mathbb{Z}$.
 By Lemma \ref{lemma-isomorphic-triangles}, the triangles $\eta'$ and $\eta_e^A$ are isomorphic, which proves the claim.

\medskip
 By the first part of the proof, $\cpx{\Delta}\otimesL_A\xi_e^A\otimesL_A\cpx{\Theta}\cong \xi_f^B$.
 Applying Lemma \ref{lem:standard derived equivalence} (3) shows that
${\cpx{\Delta}\otimesL_A\eta_e^A\otimesL_A\cpx{\Theta}\cong \eta_f^B}$.
 \end{proof}

\subsection{Almost $\nu$-stable derived equivalences}

Derived equivalences in general fail to preserve homological invariants
such as global or dominant dimension. In this respect, stable equivalences
of Morita type behave much better. Unfortunately, derived equivalences between
algebras that are not self-injective, in general do not induce stable
equivalences. The problem of finding derived equivalences, which do induce
stable equivalences of Morita type,
has been addressed in \cite{HuXi10} by introducing
a new class of derived equivalences, called \emph{almost $\nu$-stable
 derived equivalences}, and relating them with
 stable equivalences.
 As a crucial feature, these derived equivalences
 preserve many homological invariants.

\begin{Def}[(\cite{HuXi10})]
  Let $\mathcal{F}: \Db{A}\to \Db{B}$ be a derived equivalence between
  two $k$-algebras $A$ and $B$. We call $\mathcal{F}$
  an {\em almost $\nu$-stable derived equivalence} if the following conditions
  are satisfied.

$(1)$ The radical tilting complex $\mathcal{F}(A) = (\overline{T}^i, \overline{d}^i)_{i\in \mathbb{Z}}$
    in $\Kb{\pmodcat{B}}$ has nonzero terms only in positive degrees, that is, $\overline{T}^i=0$ for all $i<0$;
    the radical tilting complex $\mathcal{F}^{-1}(B) = (T^i, d^i)_{i\in \mathbb{Z}}$ in
    $\Kb{\pmodcat{A}}$ has nonzero terms only in negative degrees, that is, $T^i=0$ for all $i>0$.

$(2)$ ${\add(\bigoplus_{i<0}T^{i})=\add(\bigoplus_{i<0}\nu_{_A}T^{i})}$
    and ${\add(\bigoplus_{i>0}\overline{T}^i)=\add(\bigoplus_{i>0}\nu_{_B}\overline{T}^i)}$.
\end{Def}
 Note that we may assume without loss of generality that the tilting complex $\mathcal{F}(A)$ is radical
 by Lemma \ref{lem:morphisms in derived categories}. The two conditions for $\cpx{T}$
 are equivalent to those for $\cpx{\overline{T}}$ \cite{HuXi10}. To generalise
 this type of derived equivalence, but to keep many
 interesting properties,
 the following \emph{iterated almost $\nu$-stable derived equivalences} have
 been introduced.

\begin{Def}[(\cite{Hu12})]
  Let $\mathcal{F}: \Db{A}\to \Db{B}$ be a derived equivalence between
  two $k$-algebras $A$ and $B$.
  We call $\mathcal{F}$ an
  {\em iterated almost $\nu$-stable derived equivalence}, if
  there exists a sequence of derived equivalences $\mathcal{F}_i: \D{A_i}\cong \D{A_{i+1}}$
  of $k$-algebras $A_i$
  $(0\leq i\leq N)$ for some $N\in \mathbb{N}$ with $A_0=A$ and $A_{N+1}=B$ such that
  each $\mathcal{F}_i$ or $\mathcal{F}_i^{-1}$ is an almost
  $\nu$-stable derived equivalence and $\mathcal{F} \cong \mathcal{F}_N\circ \cdots \circ \mathcal{F}_0$.
\end{Def}

In Section \ref{sec:derived equivalences for morita algebras} we will
see that
 all derived equivalences between certain classes of algebras are
 iterated almost $\nu$-stable derived equivalences. This will make full
 use of the characterisations developed in \cite{Hu12}.
The crucial property in our context is:

\begin{Prop}[(\cite{Hu12})]
  \label{prop:almost nu-stable preserve dimensions}
   Let $\mathcal{F}: \Db{A}\to \Db{B}$ be an iterated almost $\nu$-stable
   derived equivalences between $k$-algebras.
   Then $\gldim(A)=\gldim(B)$ and $\domdim(A)=\domdim(B)$.
 \end{Prop}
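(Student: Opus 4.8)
\emph{Plan of proof.} By the definition of an iterated almost $\nu$-stable derived equivalence, $\mathcal{F}$ is a composition $\mathcal{F}_N\circ\cdots\circ\mathcal{F}_0$ of derived equivalences each of which, or whose inverse, is almost $\nu$-stable. Since the two asserted equalities are symmetric in the two algebras, it suffices to prove that a \emph{single} almost $\nu$-stable derived equivalence $\mathcal{F}\colon\Db{A}\to\Db{B}$ satisfies $\gldim A=\gldim B$ and $\domdim A=\domdim B$, and then to chain the equalities along the $A_i$. Throughout write $\mathcal{F}(A)=\cpx{\overline{T}}$ for a radical tilting complex over $B$ with $\overline{T}^i=0$ for $i<0$ and $\overline{Q}:=\bigoplus_{i>0}\overline{T}^i$ strongly projective-injective, and $\mathcal{F}^{-1}(B)=\cpx{T}$ with $T^i=0$ for $i>0$ and $Q:=\bigoplus_{i<0}T^i$ strongly projective-injective over $A$.

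For the global dimension I would first observe that finiteness is a derived invariant: $\gldim A<\infty$ holds if and only if every object of $\Db{A}$ has finite projective dimension, that is, lies in $\Kb{\pmodcat{A}}$; this is an intrinsic property of the triangulated category $\Db{A}$ and hence is preserved by $\mathcal{F}$ (cf.\ Theorem~\ref{thm:morita theory for derived category}). Assuming $\gldim A,\gldim B<\infty$, I invoke Lemma~\ref{lem:characterise global dimension}, which identifies $\gldim A$ with the largest $g$ such that $\Hom_{\Db{A}}\big({}_A\DD(A),{}_AA[g]\big)\neq 0$, that is, with the top nonvanishing cohomological degree of $\RHom_A({}_A\DD(A),{}_AA)$. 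The difficulty is that $\mathcal{F}$ need not send $A$ to $B$ or $\DD(A)$ to $\DD(B)$; this is precisely why global dimension fails to be a derived invariant in general. The almost $\nu$-stable hypothesis repairs exactly this: combining the brutal-truncation triangle $\sigma_{\geq 1}(\cpx{\overline{T}})\to\cpx{\overline{T}}\to\overline{T}^0\to\sigma_{\geq 1}(\cpx{\overline{T}})[1]$ with the identity $\add(\bigoplus_{i>0}\overline{T}^i)=\add(\bigoplus_{i>0}\nu_{_B}\overline{T}^i)$ and the mirror statements for $\cpx{T}$, one shows that $\mathcal{F}$ carries a minimal injective coresolution of ${}_AA$ to a complex that agrees with a minimal injective coresolution of ${}_BB$ up to bounded complexes of strongly projective-injective modules. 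Since the terms of such complexes are simultaneously projective and injective, they do not affect $\RHom$ between non-projective modules in the extreme degrees; hence the top nonvanishing degree of $\RHom_A({}_A\DD(A),{}_AA)$ equals that of $\RHom_B({}_B\DD(B),{}_BB)$, and $\gldim A=\gldim B$. This bookkeeping is the content of \cite{HuXi10,Hu12}.

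For the dominant dimension the same analysis yields that $\mathcal{F}$ matches strongly projective-injective modules on the two sides: it restricts to a triangle equivalence between the homotopy categories of bounded complexes of strongly projective-injective modules, and it induces a stable equivalence of Morita type between $A$ and $B$. When $\domdim A\geq 2$, a minimal faithful left $A$-module $Ae$ is projective-injective and satisfies a double centraliser property with $eAe$; one checks that $\mathcal{F}$ carries $\add({}_AAe)$ to $\add({}_BBf)$ for an idempotent $f\in B$ with $Bf$ minimal faithful, so, exactly as in Lemma~\ref{lem:derived-equivalence-preserves-canonical-triangle}, $\mathcal{F}$ restricts to a derived equivalence $\Kb{\add({}_AAe)}\simeq\Kb{\add({}_BBf)}$ and hence to a derived equivalence between $eAe$ and $fBf$ under which the bimodule $eA$ corresponds to $fB$. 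M\"uller's criterion (Proposition~\ref{prop:Muller characterisation}) then reads $\domdim$ off the vanishing of the groups $\Ext^{\ast}_{eAe}(eA,eA)$, which this derived equivalence preserves, so that $\domdim A=\domdim B$; the remaining cases $\domdim\leq 1$ are handled directly using the correspondence of projective-injective modules.

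The heart of the argument, and the step I expect to be the main obstacle, is the global-dimension claim that an almost $\nu$-stable derived equivalence respects minimal injective coresolutions of the regular module up to strongly projective-injective summands --- equivalently, that $\mathcal{F}({}_A\DD(A))$ and ${}_B\DD(B)$ differ only by a bounded complex of strongly projective-injective $B$-modules. This is exactly where both defining conditions of almost $\nu$-stability, the one-sided concentration of the tilting complex and the $\nu$-stability of its extremal terms, are genuinely needed; everything else, including the dominant-dimension part once the correspondence of strongly projective-injective and of minimal faithful modules is in place, is formal manipulation with truncations, adjunctions and Lemma~\ref{lem:derived-equivalence-preserves-canonical-triangle}.
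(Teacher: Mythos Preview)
The paper does not prove this proposition; it is quoted from \cite{Hu12} (building on \cite{HuXi10}). The argument in those references is not the direct one you sketch: the main theorem of \cite{HuXi10} is that an almost $\nu$-stable derived equivalence induces a \emph{stable equivalence of Morita type} between $A$ and $B$, and then one invokes the known fact that stable equivalences of Morita type preserve both global dimension and dominant dimension. You mention the stable equivalence of Morita type in passing, but you do not use it as the engine of the proof; that is the missed shortcut.

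Your direct arguments have genuine gaps. For global dimension, the assertion that strongly projective-injective summands ``do not affect $\RHom$ between non-projective modules in the extreme degrees'' is not a theorem and is not obviously true at the level of generality you need; what \emph{is} true, and what \cite{HuXi10} establishes, is that the two-sided tilting complex has the shape $M\oplus(\text{projective bimodule})$ for a bimodule $M$ inducing the stable equivalence of Morita type, and it is this structural fact that lets one control homological dimensions. For dominant dimension, your appeal to M\"uller's criterion fails as written: even if $\mathcal{F}$ restricts to a derived equivalence between $eAe$ and $fBf$, M\"uller's criterion asks for the vanishing of $\Ext^i_{eAe}(eA,eA)$ at \emph{specific} degrees $i$, and a derived equivalence between $eAe$ and $fBf$ does not preserve Ext-groups degreewise, nor does it send the module $eA$ to the module $fB$ rather than to a complex. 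What you need is that $eA$ and $fB$ correspond under a stable equivalence of Morita type at the module level --- precisely what the cited references supply and what your sketch bypasses.
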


 \section{Derived restriction theorem - from Morita algebras to
 self-injective algebras}
\label{sec:derived equivalences for morita algebras}

The derived restriction theorem, to be proved in this section, states that
derived equivalences between two algebras restrict to derived equivalences
between their associated self-injective centraliser subalgebras, provided the
two given algebras have $\nu$-dominant dimension at least one.
A subcategory of the bounded homotopy
category of projective modules will be defined and shown to be invariant under
derived equivalence, under the assumption on $\nu$-dominant dimension. This
will be the key ingredient in the proof of the derived restriction theorem.

Recall that for an algebra $A$ with $\Ndomdim(A)\geq 1$ and minimal
faithful left module $Ae$, the algebra $H=H_A=eAe$ is called the
 \emph{associated self-injective algebra of $A$}. Then the category $\stp{A}$
is additively generated by $Ae$. That $H$ is self-injective has been shown
in the proof of Lemma \ref{lem:two dominant dimensions coincide}.

 Suppose now that $A$ and $B$ are derived equivalent
 $k$-algebras. Like dominant dimension, $\nu$-dominant dimension is
 not a derived invariant. In particular, $\Ndomdim(A)\geq 1$ and
$\Ndomdim(B) = 0$ may happen, and there may be no reasonable way to define
a non-zero
associated self-injective algebra for $B$ in this case. An example of such
a situation has been given, for different reasons,
in \cite[Section 5]{Rickardderivedstable}.
Therefore,
it seems difficult to deduce any connection between the
 associated self-injective algebras of $A$ and $B$ in general without
assuming both algebras have $\nu$-dominant dimension at least one.

If we, however, assume that both $A$ and $B$ have
 $\nu$-dominant dimension at least $1$, then
Theorem \ref{thm:restriction theorem}
 below shows that any derived equivalence between $A$ and $B$ restricts to a
 derived equivalence between their associated self-injective algebras.
The main tool for
 showing is the following subcategory of the homotopy category.

 \begin{Def}\label{defn:nu-stable complex}
   Let $A$ be a $k$-algebra and $\nu_{_A}$ be the Nakayama functor.
   Define
   $$\nuinv{A}:=\big\{\cpx{P}\in\Kb{\pmodcat{A}}|\cpx{P}\cong \nu_{_A}(\cpx{P})\mbox{ in }\Db{A}\big\}.$$
 \end{Def}

   Note that for $\cpx{P}\in \Kb{\pmodcat{A}}$, the complex $\nu_{_A}(\cpx{P})$ is defined componentwise. When $A$ has arbitrary $\nu$-dominant dimension,
 a complex in $\nuinv{A}$ need not be isomorphic in $\K{\modcat{A}}$
 to a complex in $\Kb{\stp{A}}$; this is illustrated by an example below.
 However:

 \begin{Prop}\label{prop:nu-stable complexes}
 Let $A$ be a $k$-algebra with $\Ndomdim(A)\geq 1$.
 Then $\Kb{\stp{A}}$ is the smallest triangulated full subcategory of
 $\Kb{\pmodcat{A}}$ that contains $\nuinv{A}$ and is closed under taking direct summands.
 In particular,
 every complex in $\nuinv{A}$ is isomorphic in $\Kb{\pmodcat{A}}$ to a complex in the
 category $\Kb{\stp{A}}$.
 \end{Prop}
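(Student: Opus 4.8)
The plan is to prove both inclusions: that $\Kb{\stp{A}}$ is a triangulated full subcategory of $\Kb{\pmodcat{A}}$ containing $\nuinv{A}$ and closed under direct summands, and that it is the \emph{smallest} such. The second ("in particular") claim is an immediate consequence, since any complex in $\nuinv{A}$ then lies in any triangulated subcategory containing $\nuinv{A}$ and closed under summands, hence in $\Kb{\stp{A}}$.

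First I would address the easy inclusion: $\Kb{\stp{A}}$ \emph{contains} $\nuinv{A}$. Let $\cpx{P}\in\nuinv{A}$, so $\cpx{P}\cong\nu_{_A}(\cpx{P})$ in $\Db{A}$. Since $\Ndomdim(A)\geq 1$, by Lemma \ref{lem:two dominant dimensions coincide} every projective-injective module is strongly projective-injective, and $\stp{A}=\add(Ae)$ for the minimal faithful module $Ae$. I claim each component $P^i$ is already in $\stp{A}$. The point is that $\nu_{_A}$ is exact on $\pmodcat{A}$ (it is $\DD\Hom_A(-,A)$ applied to projectives, where both functors are exact on the relevant subcategories), so $\nu_{_A}(\cpx{P})$ is a complex of injective modules; and $\cpx{P}$ being isomorphic to it in $\Db{A}$, with $\cpx{P}$ a complex of projectives, forces — via uniqueness of minimal complexes in $\Kb{\pmodcat{A}}$ and the fact that $\nu_{_A}$ sends indecomposable projectives to indecomposable injectives — that each $P^i$ is injective, hence strongly projective-injective. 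Concretely: replace $\cpx{P}$ by a radical complex; then $\nu_{_A}(\cpx{P})$ is also radical (as $\nu_{_A}$ is an equivalence $\pmodcat{A}\to\imodcat{A}$, it preserves radical morphisms between projectives landing in injectives), and an isomorphism of radical complexes in $\Kb{\cdot}$ is an isomorphism in $\C{\cdot}$ by Lemma \ref{lem:morphisms in derived categories}(2); comparing degrees gives $P^i\cong\nu_{_A}(P^i)$ as $A$-modules for each $i$, so each $P^i$ is projective-injective, thus in $\stp{A}$. Hence $\cpx{P}\in\Kb{\stp{A}}$. That $\Kb{\stp{A}}$ is a triangulated full subcategory of $\Kb{\pmodcat{A}}$ closed under summands is routine, since $\stp{A}=\add(Ae)$ is an additive subcategory closed under summands.

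The substance is the \emph{minimality}: any triangulated full subcategory $\mathcal{S}$ of $\Kb{\pmodcat{A}}$ that contains $\nuinv{A}$ and is closed under direct summands must contain all of $\Kb{\stp{A}}$. Since $\stp{A}=\add(Ae)$ and $eAe$ is self-injective, I would show that the stalk complex $Ae$ (in degree $0$) lies in $\nuinv{A}$: indeed $\nu_{_A}(Ae)=\DD(eA)\in\add(Ae)$ because $Ae$ is strongly projective-injective; and in fact, writing $Ae$ as a sum of indecomposables permuted by $\nu_{_A}$, some power $\nu_{_A}^r$ fixes $Ae$ up to isomorphism. This gives $\nu_{_A}^r(Ae)\cong Ae$ in $\modcat{A}$, hence $Ae\in\nuinv{A}$ — but more carefully I need a single stalk complex isomorphic to its own Nakayama twist, which I get by grouping the $\nu_{_A}$-orbit: if $Q$ is an indecomposable summand of $Ae$ with orbit $Q,\nu_{_A}Q,\dots,\nu_{_A}^{r-1}Q$, then $Q\oplus\nu_{_A}Q\oplus\cdots\oplus\nu_{_A}^{r-1}Q$ is $\nu_{_A}$-stable, so lies in $\nuinv{A}$, hence in $\mathcal{S}$; being closed under summands, $\mathcal{S}$ contains $Q$, and running over all indecomposable summands, $\mathcal{S}\supseteq\add(Ae)=\stp{A}$. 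Then, since $\mathcal{S}$ is triangulated and contains all stalk complexes from $\stp{A}$, it contains every bounded complex built from them, i.e. all of $\Kb{\stp{A}}$.

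The main obstacle I anticipate is the rigidity argument in the first inclusion — pinning down that $\cpx{P}\cong\nu_{_A}(\cpx{P})$ in $\Db{A}$ actually forces each component to be projective-injective, not merely that the complexes have isomorphic cohomology. The clean way through is to work with radical complexes and invoke Lemma \ref{lem:morphisms in derived categories}: a radical complex of projectives and a radical complex of injectives that are isomorphic in $\Db{A}$ are isomorphic as complexes, and then degreewise comparison does the job, using that $\nu_{_A}$ is an equivalence onto $\imodcat{A}$. One should also record the example promised in the surrounding text showing that the hypothesis $\Ndomdim(A)\geq 1$ is genuinely needed (without it, $\nuinv{A}$ can contain complexes not isomorphic to anything in $\Kb{\stp{A}}$), but that is separate from the proof of the proposition itself.
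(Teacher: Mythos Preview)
Your minimality argument (the ``substance'' direction) is correct and essentially matches the paper's: the paper observes directly that the basic additive generator $E$ of $\stp{A}$ satisfies $\nu_{_A}(E)\cong E$, so $E\in\nuinv{A}$; your orbit-sum version is a harmless variant.

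The gap is in your ``easy inclusion'' $\nuinv{A}\subseteq\Kb{\stp{A}}$. You invoke Lemma~\ref{lem:morphisms in derived categories}(2) to pass from an isomorphism of radical complexes to a degreewise isomorphism, but that lemma is about the \emph{homotopy} category, and what you have is only an isomorphism $\cpx{P}\cong\nu_{_A}(\cpx{P})$ in $\Db{A}$. Since $\cpx{P}$ is K-projective, this does give a genuine chain map $f:\cpx{P}\to\nu_{_A}(\cpx{P})$ which is a quasi-isomorphism; but a quasi-isomorphism from a bounded complex of projectives to a bounded complex of injectives is not in general a homotopy equivalence, so you cannot conclude $P^i\cong\nu_{_A}(P^i)$ degreewise. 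Indeed, the example immediately following the proposition in the paper exhibits exactly such a quasi-isomorphism that is \emph{not} a degreewise isomorphism (there $\dim P_1=4\neq 3=\dim I_1$). That example has $\Ndomdim(A)=0$, but your argument for the degreewise isomorphism never uses $\Ndomdim(A)\geq 1$---you only invoke it afterwards to upgrade projective-injective to strongly projective-injective---so nothing in your argument rules out this obstruction.

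The paper's proof handles this by induction on the length of $\cpx{P}$. The hypothesis $\Ndomdim(A)\geq 1$ enters essentially: a Cartan--Eilenberg injective resolution $\cpx{I}$ of $\cpx{P}$ has $I^0$ equal to the injective envelope of $P^0$, which lies in $\stp{A}$ by the hypothesis; since $\nu_{_A}(\cpx{P})$ is another (radical) injective resolution, $\nu_{_A}(P^0)$ is a summand of $I^0$, hence $\nu_{_A}(P^0)\in\stp{A}$ and so $P^0\in\stp{A}$. A separate mapping-cone argument (using that $\cpx{P}$ is radical and that $P^0$ is now injective) shows $f^0:P^0\to\nu_{_A}(P^0)$ is an isomorphism, after which one truncates and repeats. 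This inductive mechanism is precisely what replaces your unjustified jump from $\Db{A}$ to $\C{A}$.
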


 \begin{proof}
   Let $\thick(\nuinv{A})$ be the smallest triangulated full subcategory of
   $\Kb{\pmodcat{A}}$ which contains $\nuinv{A}$ and is closed under taking direct
   summands. Let $E$ be a basic additive generator of $\stp{A}$.
   Then $\nu_{_A}(E)$ is isomorphic to $E$ in $\modcat{A}$, because
   $\nu_{_A}(E)$ is again basic and strongly projective-injective and has the
   same number of indecomposable direct summands as $E$.
   Therefore $E$ belongs to $\nuinv{A}$. Since $\Kb{\stp{A}}$ is the smallest triangulated
   full subcategory of $\Kb{\pmodcat{A}}$ which contains $E$ and is closed under
   taking direct summands, it follows that $\Kb{\stp{A}}\subseteq \thick{(\nuinv{A})}$.

   To finish the proof, we need to show $\nuinv{A}\subseteq \Kb{\stp{A}}$,
   that is, every radical complex 
\mbox{$\cpx{P} = (P^i, d^i)_{i\in \mathbb{Z}}$} in $\nuinv{A}$
   is isomorphic in $\Kb{\pmodcat{A}}$ to a complex in $\Kb{\stp{A}}$.
   Without loss of generality, we assume that $\inf\{l\mid P^l\neq 0\}=0$.
   Let $n=\sup\{r\mid P^r\neq 0\}$. So the complex $\cpx{P}$ is of the form
   \[
   \cdots\lra 0\lra P^0\lraf{d^0} P^1\lraf{d^1} \cdots\to P^{n-1}\lraf{d^{n-1}} P^n\lra 0\lra \cdots.
   \]

   We will prove by induction on $n$ that $\cpx{P}$ is isomorphic in $\Kb{\pmodcat{A}}$ to a complex
   in $\Kb{\stp{A}}$.
   If $n=0$, then $\nu_{_A}(\cpx{P})\cong \cpx{P}$ in $\Db{A}$ implies that
   $\nu_{_A}(P^0)\cong P^0$ in $\modcat{A}$, and so $\cpx{P}\in \Kb{\stp{A}}$.
   In general, we first prove:

\smallskip
  {\it Claim.} $P^0$ is strongly projective-injective.

\smallskip
  {\it Proof.} Since $\cpx{P}$ is a radical complex in $\Kb{\pmodcat{A}}$
   and $\nu_{_A}: \pmodcat{A}\to \imodcat{A}$ is an equivalence, it follows
   that $\nu_{_A}(\cpx{P})$ is a radical complex in $\Kb{\imodcat{A}}$.
   Let $\cpx{f}=\{f^i\}: \cpx{P}\to \nu_{_A}(\cpx{P})$ be a quasi-isomorphism.
   Then $\nu_{_A}(\cpx{P})$ is an injective resolution of $\cpx{P}$.
   By the construction of Cartan-Eilenberg injective resolutions
   $\cpx{P}$ admits an injective resolution $\cpx{I}$ with the properties:
   $\cpx{I}$ is quasi-isomorphic to $\cpx{P}$, and
   $I^i=0$ for $i<0$ and $I^0$ is the injective envelope of $P^0$.
   By the uniqueness of injective resolutions up to homotopy,
   the radical complex $\nu_{_A}(\cpx{P})$ and the complex $\cpx{I}$
   are isomorphic in $\K{\imodcat{A}}$,
   and therefore $\nu_{_A}(P^0)$ is a direct summand of $I^0$ by Lemma
   \ref{lem:morphisms in derived categories}.
   Since $\Ndomdim(A)\geq 1$, the injective envelope $I^0$ of $P^0$ is
   strongly projective-injective. It follows that $\nu_{_A}(P^0)$ and
   hence $P^0$ are strongly projective-injective as well.

\smallskip
   {\it Claim. $f^0: P^0\to \nu_{_A}(P^0)$ is an
      isomorphism of $A$-modules. }

\smallskip
{\it Proof.}
   Let $\mathrm{Cone}(\cpx{f})$ be the mapping cone of $\cpx{f}$, a complex of the form
   \[
       \mathrm{Cone}(\cpx{f}):\qquad 0\lra P^0 \lraf{[-d^0,f^0]} P^1\oplus \nu_{_A}(P^0)\lra\cdots  \lra \nu_{_A}(P^n)\lra 0
   \]
   where $P^{0}$ is placed in degree $-1$. Since $\cpx{f}$ is a quasi-isomorphism,
   it follows that
   $\mathrm{Cone}(\cpx{f})$ is an acyclic complex, and thus
   the morphism $[-d^0, f^0]: P^0\to P^1\oplus \nu_{_A}(P^0)$ splits in $\modcat{A}$
   because $P^0$ is injective.
   Let $u: P^1\lra P^0$ and $v: \nu_{_A}(P^0)\lra P^0$ be morphisms
   in $\modcat{A}$ such that $-d^0u+f^0v=1_{P^0}$.
   Then $f^0v = 1+d^0u$ is invertible in $\End_A({P^0})$
   by the assumption that $d^0$ is a radical morphism.
   As a result, $f^0$ is a split monomorphism, and even
   an isomorphism as
   $P^0$ and $\nu_{_A}(P^0)$ have the same number of indecomposable direct summands.

\smallskip
   Now, let $\cpx{P}_{\geq 1}:=\sigma_{\geq 1}(\cpx{P})$
   be the brutal truncation of $\cpx{P}$
   and let
   $\cpx{f}_{\geq 1}: \cpx{P}_{\geq 1}\lra \nu_{_A}(\cpx{P}_{\geq 1})$ be
   the corresponding truncation of the chain morphism $\cpx{f}$.
   Starting from the  triangle
   \mbox{$P^0[-1]\lra \cpx{P}_{\geq 1} \lra \cpx{P} \lra P^0$},
   the following commutative diagram in $\D{A}$ gives a morphism of
   triangles in $\D{A}$
   \[
   \xymatrix{
    P^0[-1]\ar[r]\ar[d]^{f^0[-1]} & \cpx{P}_{\geq 1}\ar[r]\ar[d]^{\cpx{f}_{\geq 1}} & \cpx{P}\ar[r]\ar[d]^{\cpx{f}} & P^0\ar[d]^{f^0}\\
    \nu_{_A}(P^0)[-1]\ar[r] & \nu_{_A}(\cpx{P}_{\geq 1}) \ar[r] & \nu_{_A}(\cpx{P})\ar[r] & \nu_{_A}(P^0). }
   \]
   Since $\cpx{f}$ is a quasi-isomorphism (by assumption) and so is $f^0$ (by the arguments above),
   it follows that $\cpx{f}_{\geq 1}$ is a quasi-isomorphism. Therefore
   $\cpx{P}_{\geq 1}\in \nuinv{A}$, and by induction $\cpx{P}_{\geq 1}[1]$ is isomorphic in $\K{A}$ to
   a complex in $\Kb{\stp{A}}$. Using $P^0\in \stp{A}$ implies
   that $\cpx{P}$ is isomorphic in $\K{A}$ to a complex in $\Kb{\stp{A}}$.
 \end{proof}

 \noindent \textit{Example.} Without the
 assumption on $\nu$-dominant dimension, Proposition \ref{prop:nu-stable complexes}
 may fail in general:
 Let $A$ be the $k$-algebra given by the quiver
 $$\xymatrix{
  1\ar@/^/[r]^{\alpha} & 2\ar@/^/[l]^{\beta}\ar@/^/[r]^{\gamma} &3\ar@/^/[l]^{\delta} }
 $$
 and relations $\{\beta\delta, \alpha\beta\alpha, \delta\gamma\delta, \alpha\beta-\delta\gamma\}$.
 The indecomposable projective left $A$-modules are
 \[
 \xymatrix@C=.5mm@R=.5mm{ \\ P_1= \\} \xymatrix@C=.5mm@R=.5mm{&1\\&2\\1&&3\\}\hspace{1cm}
 \xymatrix@C=.5mm@R=.5mm{ \\ P_2= \\} \xymatrix@C=.5mm@R=.5mm{&2\\1&&3\\&2\\&&3}\hspace{1cm}
 \xymatrix@C=.5mm@R=.5mm{ \\ P_3= \\} \xymatrix@C=.5mm@R=.5mm{3\\2\\3}
 \]
 The indecomposable injective left $A$-modules are
 \[
 \xymatrix@C=.5mm@R=.5mm{ \\ I_1= \\} \xymatrix@C=.5mm@R=.5mm{1\\2\\1}\hspace{1cm}
 \xymatrix@C=.5mm@R=.5mm{ \\ I_2= \\} \xymatrix@C=.5mm@R=.5mm{&2\\1&&3\\&2}\hspace{1cm}
 \xymatrix@C=.5mm@R=.5mm{ \\ I_3= \\} \xymatrix@C=.5mm@R=.5mm{&2\\1&&3\\&2\\&&3}
 \]
 Let $\cpx{P}$ be the complex $0\ra P_1\raf{d} P_2\ra 0$, where $d$ is the unique
 (up to scalar) non-zero map, and $P_1$ is placed in degree zero.
 Then $\nu_{_A}(\cpx{P})$ is a complex of the form $0\ra I_1\ra I_2\ra 0$.
 The obvious surjective maps $P_1\ra I_1$ and $P_2\ra I_2$
 define a chain map from $\cpx{P}$ to $\nu_A(\cpx{P})$ and is a quasi-isomorphism.
 As a result, $\cpx{P}\in \nuinv{A}$, but $\cpx{P}$ does not belong to $\Kb{\stp{A}}$,
 since $\stp{A}=\{0\}$.

 \begin{Theo}\label{thm:restriction theorem}
  Let $A$ and $B$ be derived equivalent $k$-algebras, both of $\nu$-dominant
  dimension at least $1$.
  Then any derived equivalence $\mathcal{F}:\Db{A}\xrightarrow{\sim}\Db{B}$
  restricts to an equivalence
  \mbox{$\Kb{\stp{A}}\xrightarrow{\sim}\Kb{\stp{B}}$}
  of triangulated subcategories.
 \end{Theo}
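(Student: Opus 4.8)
The plan is to argue entirely on the level of objects and to reduce the statement to two ingredients that are already available: that a derived equivalence intertwines the two Nakayama functors on perfect complexes, and the characterisation of $\Kb{\stp{-}}$ from Proposition~\ref{prop:nu-stable complexes}. First I would replace $\mathcal{F}$ by a standard derived equivalence: choose a two-sided tilting complex ${}_B\cpx{\Delta}_A$ with inverse ${}_A\cpx{\Theta}_B$ so that $\mathcal{F}(\cpx{X})\cong\cpx{\Delta}\otimesL_A\cpx{X}$ for all $\cpx{X}\in\Db{A}$. Since $\mathcal{F}(A)\cong\cpx{\Delta}$ is a tilting complex, $\mathcal{F}$ carries the thick subcategory of $\Db{A}$ generated by ${}_AA$, namely $\Kb{\pmodcat{A}}$, onto the thick subcategory generated by $\cpx{\Delta}$, which is $\Kb{\pmodcat{B}}$; so $\mathcal{F}$ restricts to a triangle equivalence $\Kb{\pmodcat{A}}\xrightarrow{\sim}\Kb{\pmodcat{B}}$. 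I would also record that on $\Kb{\pmodcat{A}}$ the componentwise Nakayama functor agrees with $\DD(A)\otimesL_A-$, via the natural isomorphism $\DD(A)\otimes_AP\cong\DD\Hom_A(P,A)=\nu_{_A}(P)$ for $P\in\pmodcat{A}$, and likewise over $B$.

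The heart of the proof is to show that $\mathcal{F}$ preserves the classes $\nuinv{-}$. By Lemma~\ref{lem:standard derived equivalence}(1) one has $\cpx{\Delta}\otimesL_A\DD(A)\otimesL_A\cpx{\Theta}\cong\DD(B)$ in $\D{\Modcat{B^{\mathrm e}}}$; tensoring on the right with $-\otimesL_B\cpx{\Delta}$ and using $\cpx{\Theta}\otimesL_B\cpx{\Delta}\cong{}_AA_A$ yields an isomorphism of complexes of $B$-$A$-bimodules
\[
\cpx{\Delta}\otimesL_A\DD(A)\;\cong\;\DD(B)\otimesL_B\cpx{\Delta}.
\]
Hence for every $\cpx{P}\in\Kb{\pmodcat{A}}$ there are isomorphisms in $\Db{B}$
\[
\nu_{_B}\bigl(\mathcal{F}(\cpx{P})\bigr)\cong\DD(B)\otimesL_B\cpx{\Delta}\otimesL_A\cpx{P}\cong\cpx{\Delta}\otimesL_A\DD(A)\otimesL_A\cpx{P}\cong\cpx{\Delta}\otimesL_A\nu_{_A}(\cpx{P})=\mathcal{F}\bigl(\nu_{_A}(\cpx{P})\bigr).
\]
If $\cpx{P}\in\nuinv{A}$, i.e.\ $\nu_{_A}(\cpx{P})\cong\cpx{P}$ in $\Db{A}$, then $\nu_{_B}(\mathcal{F}(\cpx{P}))\cong\mathcal{F}(\cpx{P})$ in $\Db{B}$, so $\mathcal{F}(\cpx{P})\in\nuinv{B}$. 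Thus $\mathcal{F}$ maps $\nuinv{A}$ into $\nuinv{B}$; running the same computation with a two-sided tilting complex defining a quasi-inverse of $\mathcal{F}$ (this is where $\Ndomdim(B)\geq1$ is used symmetrically, together with the fact that its associated self-injective algebra is honestly non-zero) shows that $\mathcal{F}^{-1}$ maps $\nuinv{B}$ into $\nuinv{A}$.

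It then remains to invoke Proposition~\ref{prop:nu-stable complexes}: since $\Ndomdim(A)\geq1$, it identifies $\Kb{\stp{A}}$ with the smallest triangulated full subcategory of $\Kb{\pmodcat{A}}$ containing $\nuinv{A}$ and closed under direct summands, and similarly for $B$. As $\mathcal{F}\colon\Kb{\pmodcat{A}}\xrightarrow{\sim}\Kb{\pmodcat{B}}$ is a triangle equivalence (so it preserves triangles and direct summands) carrying $\nuinv{A}$ into $\nuinv{B}$, it carries $\Kb{\stp{A}}$ into $\Kb{\stp{B}}$; applying this to $\mathcal{F}^{-1}$ gives the reverse inclusion. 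Finally, $\mathcal{F}$ is fully faithful on all of $\Db{A}$, hence so is its restriction $\Kb{\stp{A}}\to\Kb{\stp{B}}$, and essential surjectivity follows from $\mathcal{F}^{-1}(\Kb{\stp{B}})\subseteq\Kb{\stp{A}}$; so the restriction is a triangle equivalence, as claimed. (One should also note that $\mathcal{F}$ and $\cpx{\Delta}\otimesL_A-$ need only agree on objects for this conclusion, which is all that is known.)

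The main obstacle is the middle step, making the Nakayama functor commute with the equivalence: an abstract triangle equivalence $\Db{A}\simeq\Db{B}$ is not visibly enough, and one is forced to pass to an honest two-sided tilting complex and to use Lemma~\ref{lem:standard derived equivalence}(1) ($\check{\mathcal{F}}(\DD(A))\cong\DD(B)$). The hypothesis $\Ndomdim\geq1$ on \emph{both} algebras is indispensable in the last step: without it the class $\nuinv{A}$ can strictly exceed the thick subcategory that equals $\Kb{\stp{A}}$ — exactly the phenomenon in the example following Proposition~\ref{prop:nu-stable complexes} — so knowing $\mathcal{F}(\nuinv{A})\subseteq\nuinv{B}$ would no longer control $\Kb{\stp{-}}$, and indeed $\Kb{\stp{B}}$ might even be zero while $\Kb{\stp{A}}$ is not.
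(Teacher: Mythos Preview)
Your proof is correct and follows essentially the same route as the paper: reduce to a standard derived equivalence, show that $\mathcal{F}$ intertwines the Nakayama functors on perfect complexes so that $\mathcal{F}(\nuinv{A})\subseteq\nuinv{B}$, and then invoke Proposition~\ref{prop:nu-stable complexes} on both sides to pass to $\Kb{\stp{-}}$. The paper simply asserts the commutation $\mathcal{F}(\nu_{_A}(\cpx{P}))\cong\nu_{_B}(\mathcal{F}(\cpx{P}))$, whereas you spell it out via $\cpx{\Delta}\otimesL_A\DD(A)\cong\DD(B)\otimesL_B\cpx{\Delta}$ from Lemma~\ref{lem:standard derived equivalence}(1); this is a welcome elaboration. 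One small quibble: your parenthetical locating the use of $\Ndomdim(B)\geq 1$ in the quasi-inverse Nakayama computation is misplaced---that step needs no such hypothesis; the assumption on $B$ enters only when you apply Proposition~\ref{prop:nu-stable complexes} on the $B$-side, exactly as you say in the following paragraph.
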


 \begin{proof}
   Without loss of generality, we may assume that $\mathcal{F}$ is a standard
   derived equivalence.
   Then $\mathcal{F}$ induces
   $\Kb{\pmodcat{A}}\xrightarrow{\sim}\Kb{\pmodcat{B}}$ as
   triangulated subcategories, and
   $$\mathcal{F}(\nu_{_A}(\cpx{P}))\cong\nu_{_B}(\mathcal{F}(\cpx{P}))$$
   in $\Db{B}$, for any $\cpx{P}\in\Kb{\pmodcat{A}}$. Thus
   $\mathcal{F}(\nuinv{A})\subseteq \nuinv{B}$ and so
   \mbox{$\mathcal{F}(\Kb{\stp{A}})\subseteq \Kb{\stp{B}}$}
   by Proposition \ref{prop:nu-stable complexes}. Let $\mathcal{G}$ be a
   quasi-inverse of $\mathcal{F}$. The same arguments applied to  $\mathcal{G}$
   imply $\mathcal{G}(\Kb{\stp{B}})\subseteq \Kb{\stp{A}}$.
   Therefore, $\mathcal{F}$ induces an equivalence $\Kb{\stp{A}}\xrightarrow{\sim} \Kb{\stp{B}}$.
 \end{proof}

 \noindent \textit{Remark.} Theorem \ref{thm:restriction theorem} has two predecessors: A special case, stated only for gendo-symmetric algebras, was proved in \cite{FM}, where it was
 used to relate the Hochschild cohomology of $A$ and that of its associated self-injective algebra.
 A result in the same spirit as Theorem \ref{thm:restriction theorem} was obtained
 in \cite{HuXi10} without any restriction on algebras, but assuming $\mathcal{F}$
 to be an (iterated) almost $\nu$-stable derived equivalence.

 \begin{Koro}[(The derived restriction theorem)]
 \label{cor:restriction theorem for Morita algebras}
   Let $A$ and $B$ be derived equivalent $k$-algebras, both of $\nu$-dominant dimension
   at least $1$.
   Then the associated self-injective
   algebras of $A$ and $B$ are derived equivalent.
   In particular, every derived
   equivalence of Morita algebras induces a derived equivalence of their
   associated self-injective algebras.
 \end{Koro}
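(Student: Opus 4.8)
The plan is to reduce the statement to Theorem~\ref{thm:restriction theorem} by recognising the bounded homotopy category $\Kb{\stp{A}}$ as (a model of) the category of perfect complexes over the associated self-injective algebra $H_A$, and then invoking Rickard's theorem in its $\Kb{\pmodcat{}}$-formulation.

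First I would fix idempotents $e\in A$ and $f\in B$ with $Ae$ a minimal faithful left $A$-module and $Bf$ a minimal faithful left $B$-module, so that $H_A=eAe$ and $H_B=fBf$ are the associated self-injective algebras in the sense of Definition~\ref{defn:associatedselfinjectivealgebra} (self-injective by Lemma~\ref{lem:two dominant dimensions coincide}). Since $\Ndomdim(A)\geq 1$, one has $\stp{A}=\add({}_AAe)$ as recalled earlier in this section, and likewise $\stp{B}=\add({}_BBf)$. As recalled in the proof of Lemma~\ref{lem:derived-equivalence-preserves-canonical-triangle}, the functor $eA\otimes_A-$ restricts to an equivalence of additive categories $\add({}_AAe)\xrightarrow{\sim}\pmodcat{H_A}$ with quasi-inverse $Ae\otimes_{H_A}-$; applying it termwise produces a triangle equivalence $\Kb{\stp{A}}\xrightarrow{\sim}\Kb{\pmodcat{H_A}}$, and symmetrically $\Kb{\stp{B}}\xrightarrow{\sim}\Kb{\pmodcat{H_B}}$.

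Now, given a derived equivalence between $A$ and $B$, Theorem~\ref{thm:restriction theorem} yields a triangle equivalence $\Kb{\stp{A}}\xrightarrow{\sim}\Kb{\stp{B}}$ (here both algebras having $\nu$-dominant dimension at least one is exactly the hypothesis of that theorem). Composing with the two equivalences of the previous paragraph gives a triangle equivalence $\Kb{\pmodcat{H_A}}\xrightarrow{\sim}\Kb{\pmodcat{H_B}}$, whence by the implication $(4)\Rightarrow(3)$ of Theorem~\ref{thm:morita theory for derived category} the algebras $H_A$ and $H_B$ are derived equivalent. For the ``in particular'' clause, a Morita algebra has $\nu$-dominant dimension at least $2$ by Proposition~\ref{prop:characterise morita algebras}, hence at least $1$, so the first assertion applies verbatim.

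Since Theorem~\ref{thm:restriction theorem} already carries the real weight, there is no genuine obstacle here; the one point demanding care is to resist passing through $\Db{H_A}$ directly. The algebras $H_A$ and $H_B$ are self-injective and, outside the semisimple case, have infinite global dimension, so $\Kb{\pmodcat{H_A}}$ is a proper triangulated subcategory of $\Db{H_A}$ and the equivalence we construct lives only at the level of perfect complexes. It is precisely the $\Kb{\pmodcat{}}$-version, clause $(4)$ of Theorem~\ref{thm:morita theory for derived category}, that makes the conclusion available without having to extend the equivalence to all of $\Db{H_A}$.
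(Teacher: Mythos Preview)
Your proof is correct and follows essentially the same approach as the paper: identify $\Kb{\stp{A}}$ with $\Kb{\pmodcat{H_A}}$ via the additive equivalence $\stp{A}\simeq\pmodcat{H_A}$, invoke Theorem~\ref{thm:restriction theorem}, and conclude by Rickard's criterion. You have simply spelled out more explicitly the functor realising the additive equivalence, the appeal to clause~(4) of Theorem~\ref{thm:morita theory for derived category}, and the justification of the ``in particular'' clause via Proposition~\ref{prop:characterise morita algebras}; the paper compresses all of this into two sentences.
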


 \begin{proof}
   Let $H$ be an associated self-injective algebra of $A$. Then by definition
   $\stp{A}\cong \pmodcat{H}$ as additive categories, and thus
   $\Kb{\stp{A}}\cong \Kb{\pmodcat{H}}$ as triangulated categories. The
   statement then follows directly from Theorem \ref{thm:restriction theorem}.
 \end{proof}

An application of Corollary \ref{cor:restriction theorem for Morita algebras} is to
Auslander algebras:
 A (finite dimensional) $k$-algebra $A$ is said to be of
 \emph{finite representation type}, if there
 are only finitely many indecomposable $A$-modules (up to isomorphism).
 The {\em Auslander algebra} $\Gamma_A$ of $A$ is defined to be the endomorphism
 ring of the direct sum of all pairwise non-isomorphic indecomposable left
 $A$-modules.

 \begin{Koro} \label{cor:restriction theorem for Auslander algebras}
 Let $A$ and $B$ be self-injective $k$-algebras, both of finite representation
 type. Let $\Gamma_A$ and $\Gamma_B$ be the Auslander algebras of $A$ and
 $B$ respectively. Then

{\rm (1)} $\Gamma_A$ and $\Gamma_B$ are derived equivalent if and only if $A$
and $B$ are derived equivalent. 

{\rm (2)} If $\Gamma_A$ and $\Gamma_B$ are derived equivalent, then they are
   stably equivalent of Morita type.
 \end{Koro}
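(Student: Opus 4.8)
The plan is to read the structure of $\Gamma_A$ off as that of a Morita algebra, apply the Derived Restriction Theorem for the ``only if'' part of (1), and, for the ``if'' part and for (2), upgrade a given derived equivalence between $A$ and $B$ to an almost $\nu$-stable one that can be lifted to the Auslander algebras. We may and do assume that $A$ and $B$ are basic, since the Auslander algebra depends only on the Morita equivalence class.

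First I record the structural observation. Write $M$ for the basic additive generator of $\modcat{A}$ with $\Gamma_A=\End_A(M)$, and let $E$ be the direct sum of the pairwise non-isomorphic indecomposable projective $A$-modules, so that ${}_AE\cong{}_AA$ (as $A$ is basic), $E$ is a direct summand of $M$, and $\add({}_AE)=\pmodcat{A}$. Then $M$ is a generator over the self-injective algebra $A$, whence $\Gamma_A$ is a Morita algebra and $\Ndomdim(\Gamma_A)\geq 2$ by Proposition \ref{prop:characterise morita algebras}. The proof of that proposition gives $\stp{\Gamma_A}=\add\Hom_A(M,E)$; since $E$ is basic and $\Hom_A(M,-)$ restricts to a fully faithful functor $\add M\to\pmodcat{\Gamma_A}$, the module $\Hom_A(M,E)$ is a basic additive generator of $\stp{\Gamma_A}$, hence (by the proof of Lemma \ref{lem:two dominant dimensions coincide}, which shows every indecomposable projective-injective is a summand of the injective envelope of the regular module) a minimal faithful left $\Gamma_A$-module; therefore the associated self-injective algebra of $\Gamma_A$ is $\End_{\Gamma_A}(\Hom_A(M,E))\cong\End_A(E)$, which for basic $A$ is isomorphic to $A\opp$. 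The same holds with $B$ in place of $A$. Recall that $A\opp$ is self-injective exactly when $A$ is, and $A\opp$ and $B\opp$ are derived equivalent exactly when $A$ and $B$ are.

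Now to the two statements. If $\Gamma_A$ and $\Gamma_B$ are derived equivalent, then, being Morita algebras, both have $\nu$-dominant dimension at least one, so Corollary \ref{cor:restriction theorem for Morita algebras} yields a derived equivalence between their associated self-injective algebras $A\opp$ and $B\opp$, and hence between $A$ and $B$; this is the ``only if'' direction of (1). Conversely, let $\mathcal{F}\colon\Db{A}\xrightarrow{\sim}\Db{B}$ be a derived equivalence, taken to be standard. Because $A$ is basic self-injective, the indecomposable projective left $A$-modules form the same set as the indecomposable injective ones, so $\nu_A(A)\cong A$; combined with the commutation $\nu_B\circ\mathcal{F}\cong\mathcal{F}\circ\nu_A$ on $\Kb{\pmodcat{A}}$ used in the proof of Theorem \ref{thm:restriction theorem} and with uniqueness of radical representatives (Lemma \ref{lem:morphisms in derived categories}), this forces $\nu_B$ to fix every component of the radical tilting complex $\mathcal{F}(A)$, and symmetrically $\nu_A$ to fix every component of $\mathcal{F}^{-1}(B)$. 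Hence, after composing $\mathcal{F}$ with a shift placing $\mathcal{F}(A)$ in non-negative and $\mathcal{F}^{-1}(B)$ in non-positive degrees, $\mathcal{F}$ is an almost $\nu$-stable derived equivalence. Feeding this into the lifting machinery of \cite{HuXi10,Hu12}: the induced stable equivalence of Morita type $\stmodcat{A}\xrightarrow{\sim}\stmodcat{B}$ gives a bijection on isomorphism classes of non-projective indecomposables, which together with the correspondence on indecomposable projectives is realised by an almost $\nu$-stable derived equivalence $\Gamma_A\xrightarrow{\sim}\Gamma_B$ of the endomorphism algebras of the basic additive generators of $\modcat{A}$ and $\modcat{B}$. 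This finishes (1). For (2): if $\Gamma_A$ and $\Gamma_B$ are derived equivalent, then so are $A$ and $B$ by (1), and hence, by the construction just described, there is an (iterated) almost $\nu$-stable derived equivalence $\Gamma_A\xrightarrow{\sim}\Gamma_B$, which induces a stable equivalence of Morita type by \cite{HuXi10}; thus $\Gamma_A$ and $\Gamma_B$ are stably equivalent of Morita type.

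The step I expect to be the main obstacle is the lifting of an almost $\nu$-stable derived equivalence between $A$ and $B$ to one between $\Gamma_A$ and $\Gamma_B$: one must check that the induced stable correspondence together with the projective correspondence really accounts for \emph{all} indecomposables on both sides, so that the endomorphism algebras of the matched generators are precisely $\Gamma_A$ and $\Gamma_B$, and that the lifted functor is again almost $\nu$-stable. This needs the detailed module-theoretic analysis of almost $\nu$-stable derived equivalences and the explicit construction of the lifted tilting complex over $\Gamma_A$ from \cite{HuXi10,Hu12}; everything else is formal, resting on Corollary \ref{cor:restriction theorem for Morita algebras} and Proposition \ref{prop:characterise morita algebras}.
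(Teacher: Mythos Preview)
Your ``only if'' direction of (1) is the paper's argument: $\Gamma_A$ and $\Gamma_B$ are Morita algebras, the Derived Restriction Theorem gives a derived equivalence between their associated self-injective algebras, and these are (Morita equivalent to) $A$ and $B$. One small slip: the associated self-injective algebra is $e\Gamma_Ae=\End_{\Gamma_A}(\Gamma_Ae)^{\rm op}$, so you end up with $A$, not $A^{\rm op}$; your workaround via ``$A^{\rm op}$ and $B^{\rm op}$ are derived equivalent iff $A$ and $B$ are'' is fine but unnecessary.

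For the ``if'' direction of (1) and for (2), the paper takes a shorter and different route than you do. For the ``if'' direction it simply cites \cite[Corollary 3.13]{HuXi13}, which directly produces a derived equivalence between the Auslander algebras from one between the self-injective base algebras. For (2) it does \emph{not} try to produce an almost $\nu$-stable derived equivalence between $\Gamma_A$ and $\Gamma_B$; instead it argues: by (1), $A$ and $B$ are derived equivalent, hence stably equivalent of Morita type by Rickard \cite[Corollary 5.5]{R91}, and then Liu--Xi \cite[Theorem 1.1]{LXi05} lifts this stable equivalence of Morita type directly to the Auslander algebras $\Gamma_A$ and $\Gamma_B$. Your strategy---lift an almost $\nu$-stable derived equivalence $A\simeq B$ to one between $\Gamma_A$ and $\Gamma_B$, then invoke \cite{HuXi10}---is plausible, but the step you yourself flag as the obstacle (that the lifted equivalence is again almost $\nu$-stable, and that the matched generators are exactly the Auslander generators) is precisely what the paper sidesteps by quoting \cite{HuXi13} and \cite{LXi05} as black boxes. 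So there is no error in your outline, but the paper's proof is both shorter and avoids the verification you left open.
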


 \begin{proof}
   (1) If $A$ and $B$ are derived equivalent, then by \cite[Corollary 3.13]{HuXi13},
   their Auslander algebras $\Gamma_A$ and $\Gamma_B$ are derived equivalent. Conversely,
   if $\Gamma_A$ and $\Gamma_B$ are derived equivalent, then
   by Corollary \ref{cor:restriction theorem for Morita algebras} $A$ and $B$ are
   derived equivalent,
   since $A$ and $B$ are the associated self-injective algebras of $\Gamma_A$ and
   $\Gamma_B$ respectively.

   (2) If $\Gamma_A$ and $\Gamma_B$ are derived equivalent, then $A$
   and $B$ are derived equivalent by (1), and thus stably equivalent of Morita type by
   \cite[Corollary 5.5]{R91}. Now \cite[Theorem 1.1]{LXi05} implies
   that $\Gamma_A$ and $\Gamma_B$ are stably equivalent of Morita type.
 \end{proof}

\section{Invariance of homological dimensions}

In this section, the two invariance theorems will be proven. For almost
self-injective algebras the approach is to show that standard equivalences
have a special form; they are iterated almost $\nu$-stable and therefore
preserve both global and dominant dimension. For algebras with a duality, a
rather different approach will be taken, identifying the two homological
dimensions inside the derived category. Before addressing invariance, in
the first subsection the general question is addressed how much homological
dimensions can vary under derived equivalences.

\subsection{Variance of homological dimensions under derived
  equivalences}\label{subsec:bound the difference}

 As it is well-known, the difference of global dimensions of two derived
 equivalent algebras is bounded by the
 length of a tilting complex inducing a derived equivalence,
 see for example \cite[Section 12.5(b)]{GR}.
 More precisely,
 let $A$ be a $k$-algebra, and define the \emph{length}
 of a radical complex $\cpx{X}$ in $\Kb{A}$ to be
 \[
    \ell(\cpx{X}) = \sup\{t\mid X^t\neq 0\} - \inf\{b\mid X^b\neq 0\} +1.
 \]
 The length of an arbitrary complex $\cpx{Y}$ in $\Kb{A}$ is defined to
 be the length of the unique radical complex that is isomorphic to $\cpx{Y}$ in
 $\Kb{A}$ (Lemma \ref{lem:morphisms in derived categories}).

 \begin{Prop}[{(\cite[Section 12.5(b)]{GR})}]
   \label{prop:bound global dimension}
   Let $\mathcal{F}: \Db{A}\lraf{\sim} \Db{B}$ be a derived equivalence
   between $k$-algebras. Then $|\gldim(A)-\gldim(B)|\leq
   \ell(\mathcal{F}(A))-1$.
 \end{Prop}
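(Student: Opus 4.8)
The plan is to reduce to finite global dimension and then to control, through $\mathcal F$, the cohomological amplitude of the images of modules. First, if $\gldim A=\infty$ then also $\gldim B=\infty$: finiteness of global dimension is equivalent to $\Db A=\Kb{\pmodcat A}$, and $\mathcal F$ restricts to a triangle equivalence $\Kb{\pmodcat A}\xrightarrow{\sim}\Kb{\pmodcat B}$ of the subcategories of perfect complexes, so the property is transported. Hence I may assume $\gldim A,\gldim B<\infty$. Since only the isomorphism class of $\mathcal F(A)$ (for $\ell$) and the two algebras (for $\gldim$) enter, I may also replace $\mathcal F$ by a standard derived equivalence inducing the same tilting complex; so $\mathcal F=\cpx\Delta\otimesL_A-$ for a two-sided tilting complex ${}_B\cpx\Delta_A$ with inverse ${}_A\cpx\Theta_B$ as in Lemma \ref{lem:standard derived equivalence}, and $\cpx T:=\mathcal F(A)$ is written as a radical complex of projective $B$-modules with terms in degrees $a,\dots,b$, so that $\ell(\mathcal F(A))=b-a+1$.

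The heart of the matter is the claim that $\H^j(\mathcal F(M))=0$ for every $M\in\modcat A$ whenever $j<a$ or $j>b$. Since $\gldim A<\infty$, $M$ admits a finite projective resolution and a finite injective coresolution over $A$. A projective $A$-module has $\mathcal F$-image in $\add_{\Kb{\pmodcat B}}(\cpx T)$, hence represented by a complex of projective $B$-modules with terms in $[a,b]$; expressing $M$ as the iterated extension in $\Db A$ of the stalks of its projective resolution, placed in non-positive degrees, and applying the triangle functor $\mathcal F$, one gets $\H^j(\mathcal F(M))=0$ for $j>b$. For the vanishing below $a$ I would use that a standard derived equivalence commutes with the derived Nakayama functor, $\mathcal F\circ\nu_A\cong\nu_B\circ\mathcal F$ (a consequence of $\cpx\Delta\otimesL_A\DD(A)\cong\DD(B)\otimesL_B\cpx\Delta$, which follows from Lemma \ref{lem:standard derived equivalence}(1)): as $\DD(A)=\nu_A(A)$ and $\cpx T$ is termwise projective over $B$, this yields $\mathcal F(\DD A)\cong\nu_B(\cpx T)=\DD(B)\otimes_B\cpx T$, a complex of \emph{injective} $B$-modules with terms in degrees $[a,b]$. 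Hence every module in $\add(\DD A)$ has $\mathcal F$-image with cohomology in $[a,b]$, and feeding a finite injective coresolution of $M$, placed in non-negative degrees, through $\mathcal F$ gives $\H^j(\mathcal F(M))=0$ for $j<a$. (Equivalently: $\mathcal F$ transports the standard $t$-structure of $\Db A$ to one with aisle inside $\Db B^{\le b}$ and coaisle inside $\Db B^{\ge a}$, because $\Db A^{\le0}$ is generated under extensions and summands by the $A[n]$, $n\ge0$, and $\Db A^{\ge0}$ by the $\DD(A)[n]$, $n\le0$.)

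Granting this, for $M,N\in\modcat A$ one has $\Ext^i_A(M,N)\cong\Hom_{\Db B}(\mathcal F(M),\mathcal F(N)[i])$; since $\inf$-cohomology of $\mathcal F(M)$ is $\geq a$ and $\sup$-cohomology of $\mathcal F(N)$ is $\leq b$, dévissage along the canonical truncation triangles (using $\gldim B<\infty$, so $\Ext^{>\gldim B}_B=0$) forces this group to vanish for $i>(b-a)+\gldim B$, whence $\gldim A\le\gldim B+\ell(\mathcal F(A))-1$. The reverse inequality is symmetric, via the quasi-inverse $\mathcal G=\cpx\Theta\otimesL_B-$: here $\cpx\Theta\cong\RHom_B(\cpx\Delta,B)$ is, as a complex of right $B$-modules, a complex of projective right $B$-modules with terms in degrees $[-b,-a]$, so $\mathcal G(V)=\cpx\Theta\otimes_BV$ has cohomology in $[-b,-a]$ for every $V\in\modcat B$; the same dévissage (now with $\gldim A<\infty$) gives $\gldim B\le\gldim A+\ell(\mathcal F(A))-1$. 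Combining, $|\gldim A-\gldim B|\le\ell(\mathcal F(A))-1$.

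The step I expect to be the main obstacle is precisely the amplitude claim for $\mathcal F(M)$, in particular the vanishing of $\H^j(\mathcal F(M))$ for $j<a$: a projective resolution of $M$ alone only bounds this by $a-\gldim A$, which makes the resulting estimate circular. It is the compatibility of $\mathcal F$ with the Nakayama functor — equivalently, the fact that the standard coaisle of $\Db A$ is generated by $\DD(A)$ rather than by $A$ — that pins the lower end of the window at $a$ and makes the final bound genuinely symmetric in $\ell(\mathcal F(A))$.
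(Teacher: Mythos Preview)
The paper does not give its own proof of this proposition; it is quoted as a known result from Keller's chapter in \cite{GR}. Your argument is correct.

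One remark on economy: your direction-2 argument already suffices for both inequalities. You observe that $\cpx{\Theta}$, restricted to right $B$-modules, is represented by $\Hom_B(\cpx{T},B)$ with projective terms in degrees $[-b,-a]$, so $\mathcal G(V)=\cpx{\Theta}\otimesL_B V$ has cohomological amplitude at most $b-a$ for every $B$-module $V$, whence the d\'evissage bound $\gldim B\leq\gldim A+(b-a)$. The reverse inequality then follows by pure symmetry once one knows $\ell(\mathcal F(A))=\ell(\mathcal F^{-1}(B))$, which the paper itself cites (in the proof of Theorem \ref{thm:bound the difference}) as \cite[Lemma 2.1]{HuXi10}. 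This is essentially the classical proof.

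Your direction-1 argument, bounding the amplitude of $\mathcal F(M)$ from below via an injective coresolution of $M$ together with the compatibility $\mathcal F\circ\nu_{_A}\cong\nu_{_B}\circ\mathcal F$, is correct and conceptually sharper: it pins down the transported $t$-structure window to $[a,b]$ rather than merely bounding its width. But for the bare inequality of the proposition it is not needed; the more elementary tensor-product bound plus symmetry does the job.
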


 Naively, one may expect that dominant dimension behaves similarly
 under derived equivalences. Here is a counterexample:
 Let $n\geq 2$ be an integer, and let $A$ be the $k$-algebra given by the
 quiver
 \[
   \xymatrix{
   1&  2\ar[l]_{\alpha_1}& 3\ar[l]_{\alpha_2}& \cdots \ar[l]_{\alpha_3}& 2n\ar[l]_(.45){\alpha_{2n-1}}& 2n+1\ar[l]_{\alpha_{2n}}
   }
 \]
 and relations $\alpha_{i}\alpha_{i+1}=0\; (1\leq i\leq 2n-1, i\neq n)$.
 Let $S_i$ denote the simple left $A$-module corresponding to
 the vertex $i$ and $P_i$ be the projective cover of $S_i$.
 The modules $P_i$ are projective-injective
 for $ i\neq 1,n+1$. The projective dimensions of the simple modules are
 $\projdim S_1 = 0$, $\projdim S_{i} = \projdim S_{i+n} = i-1$ for
 $2\leq i\leq n+1$, and the minimal injective resolution of
 the left regular $A$-module is of the form
 \begin{align*}
    0\ra A \ra (\bigoplus_{i\neq 1, n+1} P_i)\oplus P_2\oplus P_{n+2}
    \ra P_3\oplus P_{n+3}\ra \cdots & \ra P_n \oplus P_{2n}
    \ra P_{n+2}\oplus P_{2n+1}\\
    & \ra S_{2n+1}\oplus P_{n+2}/S_n\ra 0
 \end{align*}
 Consequently, $\domdim(A)= \gldim(A)=n$.
 Let $T:=\tau^{-1}S_1\oplus P_2\oplus\cdots P_{2n+1}$ be the APR-tilting
 module (see, for instance, \cite[VI.2.8]{ASS}) 
 associated with the projective simple $A$-module $S_1$, and
 let $B=\End_A(T)$. Then $\projdim T=1$ and therefore the derived
 equivalence between $A$ and $B$ induced by $T$ is given by a two-term
 tilting complex. By direct computation, $B$ is seen to be
 isomorphic to the $k$-algebra given by the same quiver as $A$
 but with different relations $\alpha_{i}\alpha_{i+1}=0$ for $2\leq i\leq 2n,
 \, i\neq n$.
 As a result, $\domdim(B) = 1$ and the difference between
 dominant dimensions of $A$ and $B$ is $(n-1)$, although the derived
 equivalence is induced by a tilting module.

 Although this example smashes any hope to bound the difference
 of dominant dimensions of derived equivalent algebras in terms of lengths
 of tilting complexes, there are still some cases where both
 global dimension and dominant dimension behave nicely, see
 \cite{HuXi10,Hu12} and \cite{FM}. Note that both algebras in the example
 above are of $\nu$-dominant dimension $0$. This suggests to restrict
 attention to algebras of $\nu$-dominant dimension
 at least $1$ - a restriction that has been needed for
 Theorem \ref{thm:restriction theorem} and
that will be further justified by the invariance results later on.

 \begin{Theo}\label{thm:bound the difference}
   Let $A$ and $B$ be $k$-algebras, both of $\nu$-dominant
   dimension at least $1$ and derived equivalent by
   $\mathcal{F}: \Db{A}\to \Db{B}$. Then
   $|\domdim(A)-\domdim(B)|\leq \ell(\mathcal{F}(A))-1$.
 \end{Theo}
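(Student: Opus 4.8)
The key is to reduce the statement about dominant dimension to information that can be read off inside the derived category, using the hypothesis $\Ndomdim \geq 1$ which forces $\domdim = \Ndomdim$ (Lemma \ref{lem:two dominant dimensions coincide}). The first step is to translate $\domdim(A) \geq n$ into a vanishing statement: since $\Ndomdim(A) \geq 1$, a minimal injective resolution of ${}_AA$ has its first $\domdim(A)$ terms lying in $\add(Ae) = \stp{A}$, where $Ae$ is the minimal faithful left module. Thus $\domdim(A)$ is essentially the largest $n$ such that $\DD(A)$, or rather the relevant cohomology, can be computed by a complex supported in $\stp{A}$ up to degree $n-1$. Concretely, I would characterise $\domdim(A)$ as the supremum of integers $d$ such that there is a complex $\cpx{Q} \in \Kb{\stp{A}}$ with a morphism $A \to \cpx{Q}$ in $\Db{A}$ whose cone has no cohomology in degrees $< d$ (equivalently, $\Hom_{\Db{A}}(A, \cpx{Q}[i])$ agrees with $\mathrm{H}^i$ of the injective resolution for $i < d$), using Lemma \ref{lem:morphisms in derived categories}(3) to detect cohomology via $\Hom_{\Db{A}}(A, -[i])$.

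The second step is to transport this across the derived equivalence. Assume without loss of generality that $\mathcal{F}$ is standard. By Theorem \ref{thm:restriction theorem}, $\mathcal{F}$ restricts to an equivalence $\Kb{\stp{A}} \xrightarrow{\sim} \Kb{\stp{B}}$, so it sends a complex in $\stp{A}$ witnessing $\domdim(A) \geq d$ to a complex in $\stp{B}$. The only obstruction to immediately concluding $\domdim(B) \geq d$ is that $\mathcal{F}(A)$ is not $B$ but a tilting complex $\cpx{T}$ of some length $\ell = \ell(\mathcal{F}(A))$; the characterisation above is stated in terms of $\Hom_{\Db{B}}(\cpx{T}, -[i])$ rather than $\Hom_{\Db{B}}(B, -[i])$. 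So the third step is to compare these two: since $\cpx{T}$ is a radical complex of length $\ell$ built from projectives, and $B \in \add(\cpx{T}) * \cdots * \add(\cpx{T})$ (an iterated extension of at most $\ell$ shifted copies of summands of $\cpx{T}$, or dually $\cpx{T}$ is such an extension of shifted copies of $B$), a cohomological vanishing of the cone of $\cpx{T} \to \mathcal{F}(\cpx{Q})$ in a range of length $d$ implies, after losing at most $\ell - 1$ degrees, the corresponding vanishing for $B$. This gives $\domdim(B) \geq \domdim(A) - (\ell - 1)$.

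The fourth step is symmetry: applying the same argument to the quasi-inverse $\mathcal{G} = \mathcal{F}^{-1}$, noting that $\ell(\mathcal{G}(B)) = \ell(\mathcal{F}(A))$ (the length of the tilting complex and of its inverse coincide, as both are determined by the same two-sided tilting complex), yields $\domdim(A) \geq \domdim(B) - (\ell - 1)$, and combining the two inequalities gives $|\domdim(A) - \domdim(B)| \leq \ell(\mathcal{F}(A)) - 1$. One should also dispatch the degenerate cases where one of the dominant dimensions is infinite: if $\domdim(A) = \infty$ then $A$ is conjecturally self-injective but in any case $\stp{A} = \pmodcat{A}$, so $\mathcal{F}$ restricts to an equivalence $\Kb{\pmodcat{A}} \cong \Kb{\stp{B}} \subseteq \Kb{\pmodcat{B}}$, forcing $\stp{B} = \pmodcat{B}$ and hence $\domdim(B) = \infty$ as well, so the inequality holds trivially.

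**Main obstacle.** The delicate point is the precise bookkeeping in step three: making rigorous the claim that passing between $\Hom_{\Db{B}}(B, -)$ and $\Hom_{\Db{B}}(\cpx{T}, -)$ costs exactly $\ell(\mathcal{F}(A)) - 1$ degrees of cohomological vanishing, in the correct direction. This requires carefully using the brutal-truncation triangles of the radical complex $\cpx{T}$ (as set up before the statement of Theorem \ref{thm:morita theory for derived category}) together with the long exact cohomology sequences, and checking that the intermediate terms $\add(\cpx{T}^i)$ are strongly projective-injective so that the whole argument stays inside $\Kb{\stp{B}}$ where Theorem \ref{thm:restriction theorem} applies; the off-by-one in the range is exactly what produces the $-1$ in the bound, matching the global-dimension estimate of Proposition \ref{prop:bound global dimension}.
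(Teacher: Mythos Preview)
Your overall strategy coincides with the paper's: encode dominant dimension as the existence of a partial resolution of the regular module by objects of $\stp{}$, transport such complexes through the derived equivalence using Theorem~\ref{thm:restriction theorem}, and absorb a shift of $\ell(\mathcal{F}(A))-1$ coming from the length of the tilting complex. The symmetry reduction via $\ell(\mathcal{F}^{-1}(B))=\ell(\mathcal{F}(A))$ is also exactly what the paper uses.

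The gap is in your step~3. Transporting your witness $\cpx{Q}\in\Kb{\stp{A}}$ for $\domdim(A)$ produces $\mathcal{F}(\cpx{Q})\in\Kb{\stp{B}}$ together with a morphism from $\cpx{T}=\mathcal{F}(A)$, so what you obtain on the $B$ side is a $\stp{B}$-approximation of the tilting complex, \emph{not} of $B$. Your proposed remedy, using brutal truncations of $\cpx{T}$ and ``checking that the intermediate terms $\add(\cpx{T}^i)$ are strongly projective-injective'', does not work: the terms $T^i$ of a tilting complex are arbitrary projectives and in general do \emph{not} lie in $\stp{B}$, so these truncations leave $\Kb{\stp{B}}$ and you cannot convert the approximation of $\cpx{T}$ into one of $B$ this way.

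The paper repairs this by reversing the direction of the construction, and this is the missing idea. To show $\domdim(A)\geq m-n$ with $m=\domdim(B)$ and $n=\ell(\mathcal{F}(A))-1$, one builds the $\stp{B}$-approximation of $\cpx{T}$ \emph{on the $B$ side}, where the hypothesis $\domdim(B)=m$ is available: take a Cartan--Eilenberg injective resolution $\cpx{I}$ of $\cpx{T}$, assembled columnwise from minimal injective resolutions of the projective modules $T^i$. Because each $T^i$ is projective and $\domdim(B)=m$, the terms $I^0,\dots,I^{m-1}$ lie in $\stp{B}$, so the brutal truncation $\cpx{I}_{<m}$ belongs to $\Kb{\stp{B}}$. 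Now transport back: by Theorem~\ref{thm:restriction theorem}, $\mathcal{F}^{-1}(\cpx{I}_{<m})\in\Kb{\stp{A}}$, and since $\mathcal{F}^{-1}(\cpx{I})\cong A$, the long exact cohomology sequence of the truncation triangle shows that $\mathcal{F}^{-1}(\cpx{I}_{<m})$ has $H^0\cong A$ and vanishing $H^i$ for $1\leq i\leq m-n-2$. The loss of $n$ degrees enters precisely through the vanishing $\Hom_{\Db{B}}(\cpx{T},\cpx{I}_{\geq m}[i])=0$ for $i\leq m-n-1$, which uses only that $\cpx{T}$ sits in a window of width $n$. The radical representative of $\mathcal{F}^{-1}(\cpx{I}_{<m})$ in $\Kb{\stp{A}}$ then furnishes the exact sequence $0\to A\to E^0\to\cdots\to E^{m-n-1}$ witnessing $\domdim(A)\geq m-n$.
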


 \begin{proof}
   Let $m=\domdim(B)$ and $n=\ell(\mathcal{F}(A))-1$.
   Since $\ell(\mathcal{F}^{-1}(B))=\ell(\mathcal{F}(A))=n+1$ by \cite[Lemma 2.1]{HuXi10},
   it is enough to show that $\domdim(A)\geq m-n.$
   If $m\leq n+1$, there is nothing left to prove. Assume that $\mathcal{F}$
   is a standard derived equivalence,
   $m>n+1$ and $\cpx{P} = \mathcal{F}(A)$ is a radical tilting complex
   in $\Kb{\pmodcat{B}}$ of the following form (up to degree shift)
   \[
    0\lra P^0\lra P^1\lra P^2\lra \cdots \lra P^{n-2}\lra P^{n-1}\lra P^n \lra 0
   \]
   where $P^0$ is nonzero and placed in degree $0$. Take a Cartan-Eilenberg
   injective resolution $\cpx{I}$ of $\cpx{P}$ that is the total complex
   of the double complex obtained by taking minimal injective resolutions
   of $P^i$ for all $i$.
   Since $\Ndomdim(B)\geq 1$ and $m\geq n+1$, the modules $I^i$ are strongly
   projective-injective for $0\leq i\leq m-1$
   by Lemma \ref{lem:two dominant dimensions coincide}.
   Let $\cpx{I}_{<m}$ and $\cpx{I}_{\geq m}$ be the brutal truncations of
   $\cpx{I}$.
   By definition of brutal truncation, there is a triangle in $\D{B}$
   \[
    (\ast) \qquad \cpx{I}_{<m}[-1]\lra \cpx{I}_{\geq m}\lra \cpx{I}\lra \cpx{I}_{<m}.
   \]
   The standard derived equivalence $\mathcal{F}$ lifts
   to a derived equivalence (denoted by $\mathcal{F}$ again)
   from $\D{A}$ to $\D{B}$.
   Applying $\mathcal{F}^{-1}$ to the triangle $(\ast)$, we
   obtain the  triangle in $\D{A}$
   \begin{align}\label{eqn:key  triangle II}
    \mathcal{F}^{-1}(\cpx{I}_{<m})[-1]\lra  \mathcal{F}^{-1}(\cpx{I}_{\geq m})\lra \mathcal{F}^{-1}(\cpx{I})\lra \mathcal{F}^{-1}(\cpx{I}_{<m})
   \end{align}
   and therefore the long exact sequence
   \begin{align}\label{eqn:long exact sequence}
     \cdots\ra \mathrm{H}^{i-1}(\mathcal{F}^{-1}(\cpx{I}_{<m}))\ra
     \mathrm{H}^i(\mathcal{F}^{-1}(\cpx{I}_{\geq m}))\ra
     \mathrm{H}^i(\mathcal{F}^{-1}(\cpx{I}))\ra \mathrm{H}^i(\mathcal{F}^{-1}(\cpx{I}_{<m})) \ra \cdots
   \end{align}
   Note that $\mathcal{F}^{-1}(\cpx{I})
   \cong \mathcal{F}^{-1}(\cpx{P})\cong \mathcal{F}^{-1}\circ \mathcal{F}(A)\cong A$ in $\D{A}$,
   and $\mathcal{F}^{-1}(\cpx{I}_{<m})$ belongs to $\Kb{\stp{A}}$ by
   the construction of $\cpx{I}_{<m}$ and Theorem \ref{thm:restriction theorem}.
   Moreover, Lemma \ref{lem:morphisms in derived categories}
   implies for $i\leq m-n-1$,
   \begin{align*}
    \mathrm{H}^{i}(\mathcal{F}^{-1}(\cpx{I}_{\geq m})) \cong
    \Hom_{\D{A}}(A, \mathcal{F}^{-1}(\cpx{I}_{\geq m})[i]) \cong
    \Hom_{\D{B}}(\cpx{P}, \cpx{I}_{\geq m}[i]) =0
   \end{align*}
   since $(\cpx{I}_{\geq m}[i])^p =0$ for $p\leq n$ and $i\leq m-n-1$.
   Therefore, from the long exact sequence (\ref{eqn:long exact sequence}),
   $\mathrm{H}^i(\mathcal{F}^{-1}(\cpx{I}_{<m}))=0$ for $i<0$,
   $\mathrm{H}^0(\mathcal{F}^{-1}(\cpx{I}_{<m}))\cong A$, and
   \mbox{$\mathrm{H}^{i}(\mathcal{F}^{-1}(\cpx{I}_{<m}))\cong 
\mathrm{H}^{i+1}(\mathcal{F}^{-1}(\cpx{I}_{\geq m})) = 0$}
   for $1\leq i\leq m-n-2$.
   Hence
   $\mathcal{F}^{-1}(\cpx{I}_{<m})$ is isomorphic to a radical complex
   in $\Kb{\stp{A}}$ of the form
   \[
    0\lra E^0 \lra E^1 \lra \cdots \lra E^{m-n-2}\lra E^{m-n-1}\lra \cdots
   \]
   such that $0\to A\to E^0\ra E^1\ra \cdots \ra E^{m-n-2}\ra E^{m-n-1}$
   is exact. Consequently the dominant dimension of $A$ is at least $m-n$,
   as desired.
 \end{proof}

A special case of Theorem \ref{thm:bound the difference} is:

 \begin{Koro}\label{cor:bound difference for morita algebras}
    Let $A$ and $B$ be Morita algebras. If there is a derived equivalence 
\mbox{$\mathcal{F}: \Db{A}\raf{\sim} \Db{B}$},
    then $|\domdim(A)-\domdim(B)|\leq \ell(\mathcal{F}(A))-1$.
 \end{Koro}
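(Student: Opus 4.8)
The statement is an immediate consequence of Theorem \ref{thm:bound the difference}, whose only hypothesis beyond the existence of a derived equivalence is that both algebras have $\nu$-dominant dimension at least one. So the entire task reduces to checking that Morita algebras meet this requirement, and there is essentially nothing else to do.

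First I would invoke Proposition \ref{prop:characterise morita algebras}, which characterises Morita algebras as precisely the $k$-algebras $C$ with $\Ndomdim(C)\geq 2$. Applying this to the two given Morita algebras yields $\Ndomdim(A)\geq 2\geq 1$ and $\Ndomdim(B)\geq 2\geq 1$. Hence $A$ and $B$ satisfy the hypotheses of Theorem \ref{thm:bound the difference}, and since they are moreover derived equivalent via $\mathcal{F}$, that theorem applies verbatim and gives $|\domdim(A)-\domdim(B)|\leq \ell(\mathcal{F}(A))-1$, which is exactly the assertion.

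I do not anticipate any obstacle: all the real work has already been carried out in Theorem \ref{thm:bound the difference} (which in turn rests on Lemma \ref{lem:two dominant dimensions coincide} and Theorem \ref{thm:restriction theorem} via the truncation argument), and in the characterisation Proposition \ref{prop:characterise morita algebras}. One could additionally note, using the Corollary immediately following Proposition \ref{prop:characterise morita algebras}, that $\domdim=\Ndomdim$ for Morita algebras, so the bound may equivalently be phrased in terms of $\nu$-dominant dimensions; but this refinement is not needed for the statement as written, and I would leave it as a remark rather than fold it into the proof.
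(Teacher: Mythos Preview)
Your proposal is correct and matches the paper's approach exactly: the paper simply records this corollary as a special case of Theorem \ref{thm:bound the difference}, with no further proof given, since Proposition \ref{prop:characterise morita algebras} guarantees that Morita algebras have $\nu$-dominant dimension at least two.
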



\subsection{Almost self-injective algebras}

  Interactions between derived equivalences
  and stable equivalences frequently seem to be of particular interest,
  see \cite{HuXi10,HuXi}
  and the references therein. \cite[Corollary 5.5]{R91} and Corollary
  \ref{cor:restriction theorem for Auslander algebras} state that,
  for self-injective
  algebras and Auslander algebras of finite representation type self-injective
  algebras, derived equivalences imply stable equivalences of Morita
  type. The following theorem implies that the same holds for almost
  self-injective algebras, by characterising all
  derived equivalences among them as
  iterated almost $\nu$-stable derived equivalences.

 \begin{Theo}\label{thm:derived equivalences for almost self-injective}
   Let $A$ and $B$ be derived equivalent almost self-injective algebras.
   Then any standard derived equivalence
   $\mathcal{F}:\Db{A}\xrightarrow{\sim}\Db{B}$
   is an iterated almost $\nu$-stable
  derived equivalence (up to shifts). In particular, derived equivalent almost
  self-injective algebras are stably equivalent of Morita type.
 \end{Theo}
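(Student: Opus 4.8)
The plan is to show that, after composing $\mathcal{F}$ with a suitable shift, the radical tilting complex $\mathcal{F}(A)$ has all its terms strongly projective--injective outside a single degree, and symmetrically for $\mathcal{F}^{-1}(B)$; this is exactly the situation handled by the characterisation of (iterated) almost $\nu$-stable derived equivalences in \cite{Hu12}. First I would make the routine reductions: replace $A$, $B$ by Morita equivalent basic algebras and work with the radical representatives $\cpx{\overline{T}}=(\overline{T}^i)$ of $\mathcal{F}(A)$ in $\Kb{\pmodcat{B}}$ and $\cpx{T}=(T^i)$ of $\mathcal{F}^{-1}(B)$ in $\Kb{\pmodcat{A}}$. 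Since $\Ndomdim(A)$ and $\Ndomdim(B)$ are at least $1$, Theorem \ref{thm:restriction theorem} shows that $\mathcal{F}$ and its quasi-inverse restrict to mutually inverse equivalences $\Kb{\stp{A}}\xrightarrow{\sim}\Kb{\stp{B}}$.

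If $A$ is self-injective, then $\stp{A}=\pmodcat{A}$, so this restriction is an equivalence $\Kb{\pmodcat{A}}\xrightarrow{\sim}\Kb{\stp{B}}$; comparing it with the ambient equivalence $\Kb{\pmodcat{A}}\xrightarrow{\sim}\Kb{\pmodcat{B}}$ forces $\Kb{\stp{B}}=\Kb{\pmodcat{B}}$, and as a stalk complex lying in $\Kb{\stp{B}}$ must have its single term in $\stp{B}$ (by uniqueness of radical complexes, Lemma \ref{lem:morphisms in derived categories}), $B$ is self-injective too; for self-injective algebras the statement is the known fact that standard derived equivalences are iterated almost $\nu$-stable up to shift (\cite{HuXi10,Hu12}). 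So assume neither $A$ nor $B$ is self-injective. Then each has a unique indecomposable projective--noninjective module; write $A\cong Ae\oplus P_A$ as left $A$-modules with $Ae$ an additive generator of $\stp{A}$, and similarly $B\cong Bf\oplus P_B$. Because $\mathcal{F}(Ae)\in\Kb{\stp{B}}=\Kb{\add({}_BBf)}$, every occurrence of the summand $P_B$ in $\cpx{\overline{T}}=\mathcal{F}(Ae)\oplus\mathcal{F}(P_A)$ is confined to the indecomposable direct summand $\mathcal{F}(P_A)$, and this summand cannot lie in $\Kb{\stp{B}}$ since $\add(\cpx{\overline{T}})$ must generate all of $\Kb{\pmodcat{B}}$.

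The crucial step is to show that $P_B$ occurs in the radical representative $\cpx{Z}$ of $\mathcal{F}(P_A)$ in exactly one degree; I expect this to be the main obstacle. Since the differentials of $\cpx{Z}$ are radical and hence vanish on tops, the multiplicity of $P_B$ in $Z^i$ equals $\dim_k\Hom_{\Db{B}}(\cpx{Z},S[-i])$ with $S=\operatorname{top}P_B$, so the set of degrees in which $P_B$ appears is $\{\,i\mid\Hom_{\Db{B}}(\mathcal{F}(P_A),S[i])\neq0\,\}$; transporting along $\mathcal{F}$, this set equals $\{\,i\mid e_{P_A}\,\mathrm{H}^i(\mathcal{F}^{-1}(S))\neq0\,\}$, where $e_{P_A}$ is the primitive idempotent with $Ae_{P_A}\cong P_A$. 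To see that this set is a singleton I would combine: the orthogonality $\Hom_{\Db{B}}(\mathcal{F}(P_A),\mathcal{F}(Ae)[n])=0=\Hom_{\Db{B}}(\mathcal{F}(Ae),\mathcal{F}(P_A)[n])$ and $\Hom_{\Db{B}}(\mathcal{F}(P_A),\mathcal{F}(P_A)[n])=0$ for $n\neq0$; the Nakayama compatibility $\mathcal{F}\circ\nu_A\cong\nu_B\circ\mathcal{F}$ on $\Kb{\pmodcat{}}$ (valid for standard $\mathcal{F}$, cf. the proof of Theorem \ref{thm:restriction theorem}), together with the fact that in an almost self-injective algebra $\nu_A$ permutes the indecomposable projective--injectives and carries $P_A$ to the unique noninjective indecomposable injective; and a minimality argument to the effect that if $P_B$ occurred in two degrees one could split a nonzero complex in $\Kb{\stp{B}}$ off $\mathcal{F}(P_A)$, contradicting its indecomposability.

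Granting this, let $[m]$ be the shift bringing the distinguished degree to $0$ and put $\mathcal{F}'=[m]\circ\mathcal{F}$; then all terms of $\mathcal{F}'(A)$ outside degree $0$ lie in $\stp{B}$, and by the equivalence of the two defining conditions of an almost $\nu$-stable derived equivalence (\cite{HuXi10}) the same holds for $(\mathcal{F}')^{-1}(B)$. One then concludes that $\mathcal{F}'$ is iterated almost $\nu$-stable by the characterisation of such equivalences via their tilting complexes in \cite{Hu12}; the underlying mechanism is the brutal truncation triangle $\cpx{\overline{T}}_{\geq1}\to\cpx{\overline{T}}\to\cpx{\overline{T}}_{<1}\to\cpx{\overline{T}}_{\geq1}[1]$, in which $\cpx{\overline{T}}_{\geq1}\in\Kb{\stp{B}}$ and the negative part of $\cpx{\overline{T}}_{<1}$ lies in $\stp{B}$, so that (checking, as in \cite{HuXi10,Hu12}, that $\cpx{\overline{T}}_{<1}$ is again a tilting complex over $B$) one peels off an almost $\nu$-stable factor and iterates finitely often on the length of the non-$\stp{B}$ part. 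Thus $\mathcal{F}$ is iterated almost $\nu$-stable up to a shift, and the last sentence of the theorem follows at once, since iterated almost $\nu$-stable derived equivalences induce stable equivalences of Morita type (\cite{HuXi10,Hu12}).
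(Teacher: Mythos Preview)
Your handling of the self-injective case is fine and matches the paper's. The difficulty is entirely in the non-self-injective case, and there your ``crucial step'' --- that $P_B$ occurs in exactly one degree of the radical representative $\cpx{Z}$ of $\mathcal{F}(P_A)$ --- is not established by the arguments you sketch. The self-orthogonality trick (split epi $Z^i\twoheadrightarrow P_B$ composed with split mono $P_B\hookrightarrow Z^j$) produces a chain map $\cpx{Z}\to\cpx{Z}[j-i]$ only when $i$ and $j$ are the extreme degrees of $\cpx{Z}$, because one needs $d^{i-1}=0$ and $d^j=0$; for interior occurrences of $P_B$ nothing forces the composite to commute with differentials. Likewise, the suggestion that two occurrences of $P_B$ would allow one to split off a complex in $\Kb{\stp{B}}$ has no evident mechanism: an indecomposable complex of the shape $P_B\to E\to P_B$ with $E\in\stp{B}$ is perfectly possible in principle, and nothing in your orthogonality or Nakayama-compatibility observations rules it out. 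So as written there is a genuine gap at exactly the point you flag as the main obstacle.

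The paper sidesteps this obstacle rather than confronting it. Instead of analysing $\mathcal{F}(P_A)$ against the \emph{complex} $\mathcal{F}(Ae)$, it first completes $\cpx{\mathcal{E}}:=\mathcal{F}(E_A)\in\Kb{\stp{B}}$ to a basic tilting complex $\cpx{X}\oplus\cpx{\mathcal{E}}$ over $B$ by \cite[Proposition~4.1]{HuXi}, obtaining an almost $\nu$-stable equivalence $\mathcal{G}:\Db{B}\to\Db{C}$. Pulling $\cpx{X}$ back along $\mathcal{F}$ gives an indecomposable $\cpx{T}$ with $\cpx{T}\oplus E_A$ a tilting complex over $A$; the point is that now the complement of $\cpx{T}$ is the \emph{stalk} $E_A$, and this is exactly what makes the structure lemma (Lemma~\ref{lem:tilting complex over almost self-injective algebras}, proved via $\mathcal{D}$-split triangles) go through: it shows $\cpx{T}$ is one-sided with $P_A$ occurring once, at an end. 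Then \cite[Theorem~1.3(5)]{Hu12} applies to $\mathcal{G}\circ\mathcal{F}$, and since $\mathcal{G}$ is almost $\nu$-stable, $\mathcal{F}$ itself is iterated almost $\nu$-stable up to shift. In short, the missing idea is the auxiliary equivalence $\mathcal{G}$ that trades the unmanageable complex complement $\mathcal{F}(Ae)$ for the manageable stalk complement $E_A$.
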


 To prove this theorem, we need some preparations. First, we recall
 some basics on $\mathcal{D}$-split sequences introduced in \cite{HuXi11},
 see also \cite{HuXi13}.
 Let $\cal A$ be an additive category and let $\mathcal{X}$ be a full
 subcategory of $\cal A$. A morphism
 $f:X\to M$ in $\cal A$ is a \emph{right $\mathcal{X}$-approximation}, if
 $X\in \mathcal{X}$ and for any $X'\in \mathcal{X}$, the canonical
 morphism $\Hom_{\cal A}(X',X)\to \Hom_{\cal A}(X',M)$ is an epimorphism.
 We call $f$ \emph{right minimal} if an equality $\alpha \cdot f =f$
 implies that $\alpha$ is an isomorphism, for 
\mbox{$\alpha\in \End_{\cal A}(X)$.}
 Left $\mathcal{X}$-approximations and left minimal morphisms are defined
 similarly.
 Let ${\cal C}$ be a triangulated category
 and let ${\cal D}$ be a full (not necessarily triangulated) additive subcategory of ${\cal C}$.
 A triangle in ${\cal C}$
 $$X\lraf{f}D\lraf{g}Y\lraf{h} X[1]$$ is called
 a \emph{${\cal D}$-split triangle} if $f$ is a left
 ${\cal D}$-approximation and $g$ is a right ${\cal D}$-approximation.
 A full subcategory ${\cal T}$ of ${\cal C}$ is called a {\em tilting
   subcategory}
 if $\Hom_{\cal C}({\cal T}, {\cal T}[i])=0$ for all $i\neq 0$ and $\cal C$
 itself is the only triangulated subcategory of $\cal C$ that
 contains $\cal T$ and is closed under taking direct summands.
 An object $T$ in ${\cal C}$ is a {\em tilting object}
 if $\add(T)$ is a tilting subcategory of ${\cal C}$.
 For example, all tilting complexes over an algebra $A$ are tilting objects
 in $\Kb{\pmodcat{A}}$.

 \begin{Lem} \label{lem:D-split sequences}
   Let ${\cal C}$ be a triangulated category, and ${\cal D}$ an additive full
  subcategory of ${\cal C}$.
  Let $X\raf{f}D\raf{g}Y\raf{h} X[1]$ be a ${\cal D}$-split triangle. Then: 
  
    {\rm (1)} Suppose that $f$  is left minimal and $g$ is right minimal,
    and that $X\cong \bigoplus_{i=1}^nX_i$ and $Y\cong \bigoplus_{i=1}^mY_i$
    are decompositions of $X$ and $Y$ into strongly indecomposable direct
    summands.
    Then $m=n$. In particular, if indecomposable objects in ${\cal C}$ are
    strongly indecomposable, then $X$ is indecomposable if and only if so
    is $Y$.

    {\rm (2)} If ${\cal D}\cup \{ X\}$ is a tilting subcategory of $\cal C$ and
    $\Hom_{\cal C}({\cal D}, f)$ is injective, then ${\cal D}\cup \{Y\}$ is also
    a tilting subcategory of $\cal C$. 
    
    {\rm (3)} If ${\cal D}\cup \{Y\}$ is a tilting subcategory of $\cal C$ and
    $\Hom_{\cal C}(g, {\cal D})$ is injective, then ${\cal D}\cup \{X\}$ is also
    a tilting subcategory of $\cal C$.
 \end{Lem}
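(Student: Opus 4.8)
The plan is to treat the three parts separately: (2) and (3) are formally dual and rest on long exact sequences, while (1) is a Krull--Schmidt-type counting argument. For (2), the generating condition is immediate: rotating the $\mathcal{D}$-split triangle to $Y[-1]\lra X\xrightarrow{f}D\xrightarrow{g}Y$ exhibits $X$ as a cocone of a morphism between $Y$ and an object of $\mathcal{D}$, so $\thick(\mathcal{D}\cup\{X\})\subseteq\thick(\mathcal{D}\cup\{Y\})\subseteq\mathcal{C}$, and both equal $\mathcal{C}$ because $\mathcal{D}\cup\{X\}$ is a tilting subcategory. The substance of (2) is the orthogonality $\Hom_{\mathcal{C}}(\mathcal{D}\cup\{Y\},(\mathcal{D}\cup\{Y\})[i])=0$ for $i\neq0$, which I would break into the four groups $\Hom(\mathcal{D},\mathcal{D}[i])$ (given), $\Hom(\mathcal{D},Y[i])$, $\Hom(Y,\mathcal{D}[i])$ and $\Hom(Y,Y[i])$, and attack each by feeding the triangle $X\xrightarrow{f}D\xrightarrow{g}Y\xrightarrow{h}X[1]$ and its rotations into $\Hom$. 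Using that $\mathcal{D}\cup\{X\}$ is tilting, in all but one degree each group is caught between two vanishing terms; at the surviving degrees the two hypotheses enter: $\Hom(\mathcal{D},Y[-1])=0$ because $\Hom_{\mathcal{C}}(\mathcal{D},f)$ is injective, and $\Hom(Y,\mathcal{D}[1])=0$ because the relevant connecting map $\Hom(X,D)\to\Hom(Y,D[1])$ vanishes, as every map $X\to D$ factors through $f$ (a left $\mathcal{D}$-approximation) and the composite $f[1]\circ h$ is zero.

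The delicate group is $\Hom(Y,Y[1])$. Via the long exact sequence of $\Hom(-,Y[1])$ I would identify it with $\Coker\bigl(f^{*}\colon\Hom(D,Y)\to\Hom(X,Y)\bigr)$, so it suffices to factor an arbitrary $\varphi\colon X\to Y$ through $f$: the composite $h\circ\varphi$ lies in $\Hom_{\mathcal{C}}(X,X[1])=0$ (here again $\mathcal{D}\cup\{X\}$ is tilting), hence $\varphi$ factors through $g$ as $X\xrightarrow{\psi}D\xrightarrow{g}Y$, and then $\psi\colon X\to D$ factors through $f$ since $D\in\mathcal{D}$ and $f$ is a left $\mathcal{D}$-approximation. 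This finishes (2). Part (3) is the mirror image, obtained by applying (2) in the opposite category $\mathcal{C}\opp$: there the same triangle, read with $g$ in the place of $f$, makes $g$ a left $\mathcal{D}$-approximation, the hypothesis that $\Hom_{\mathcal{C}}(g,\mathcal{D})$ is injective becomes the injectivity hypothesis of (2), and being a tilting subcategory is a self-dual condition; the conclusion of (2) in $\mathcal{C}\opp$ is precisely (3).

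For (1) the point is that a $\mathcal{D}$-split triangle with $f$ left minimal and $g$ right minimal sets up a bijection $X\leftrightarrow Y$, additive in each variable, between isomorphism classes of objects having no nonzero direct summand in $\mathcal{D}$, with inverse obtained dually. Concretely, I would use that a left minimal left $\mathcal{D}$-approximation of $X$ exists (it is $f$), is unique up to isomorphism, and behaves additively, so that the given triangle is isomorphic to the direct sum over the indecomposable summands $X_{i}$ of $X$ of minimal $\mathcal{D}$-split triangles $X_{i}\xrightarrow{f_{i}}D_{i}\to Y_{i}\to X_{i}[1]$; then $Y\cong\bigoplus_{i}Y_{i}$. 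Next, the simultaneous minimality of $f$ and $g$ forces both $X$ and $Y$ to have no nonzero summand in $\mathcal{D}$: a summand $D_{0}\subseteq X$ lying in $\mathcal{D}$ would be carried by $f$, as a split monomorphism (since $f$ is a left $\mathcal{D}$-approximation), onto a summand of $D$ on which $g$ vanishes (because $g\circ f=0$), contradicting right minimality of $g$, and dually for $Y$. Hence each $Y_{i}$ is nonzero, and running the argument in the other direction (a nontrivial splitting of some $Y_{i}$ would, by the dual uniqueness and additivity of right minimal right $\mathcal{D}$-approximations, split the triangle and split $X_{i}$, the new cocones being nonzero since $Y_{i}$ has no $\mathcal{D}$-summand) shows each $Y_{i}$ is indecomposable; counting then gives $m=n$, and the refinement to local endomorphism rings in the ``in particular'' clause is the case where indecomposable already means strongly indecomposable.

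I expect the main obstacles to be the vanishing of $\Hom(Y,Y[1])$ in (2), where the tilting hypothesis on $\mathcal{D}\cup\{X\}$ and the approximation property of $f$ must be combined, and, in (1), the bookkeeping around minimal approximations together with the exclusion of the degenerate cases in which a summand of $X$ or $Y$ lies in $\mathcal{D}$; the remaining steps are routine long-exact-sequence chases.
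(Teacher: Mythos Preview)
Your arguments for (2) and (3) are correct and more explicit than the paper's, which simply cites \cite[Theorem 2.32]{AI} (silting mutation) for both statements. Your direct long-exact-sequence verification is a legitimate alternative route, and the handling of the critical term $\Hom(Y,Y[1])$ via the factorisation through $g$ and then through the left approximation $f$ is exactly the right idea.

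For (1), however, your approach diverges from the paper's and has a genuine gap. You want to split the triangle as a direct sum of minimal $\mathcal{D}$-split triangles $X_i\to D_i\to Y_i\to X_i[1]$, one for each strongly indecomposable summand $X_i$. This step needs two things you have not justified: that each $X_i$ admits a minimal left $\mathcal{D}$-approximation at all, and that the direct sum of these minimal approximations is again minimal (so that it agrees with $f$ up to isomorphism). Both facts are standard in Krull--Schmidt categories, but the lemma assumes only that $X$ and $Y$ individually decompose into strongly indecomposable summands; nothing is assumed about $D$ or about $\mathcal{D}$. Without this, the ``additivity of minimal approximations'' you invoke is not available, and the subsequent argument that each $g_i$ is right minimal and each $Y_i$ indecomposable cannot get started.

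The paper avoids this entirely by working with endomorphism rings. It shows that the ideal $I_X\subseteq\End_{\mathcal C}(X)$ of maps factoring through $f$ lies in the Jacobson radical (this is where left minimality of $f$ and right minimality of $g$ are used), and likewise for $I_Y$; then it constructs a ring isomorphism $\End_{\mathcal C}(X)/I_X\cong\End_{\mathcal C}(Y)/I_Y$ by lifting endomorphisms along the triangle. Passing to the semisimple quotients $\End_{\mathcal C}(X)/J_X\cong\End_{\mathcal C}(Y)/J_Y$ and using that both endomorphism rings are semiperfect (from the strongly indecomposable decompositions) gives equality of the number of simple modules, hence $m=n$. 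This argument never needs to decompose $D$ or the triangle.
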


 \begin{proof}
  (2) and (3) follow from \cite[Theorem 2.32]{AI}. It remains to prove (1).
  If $D\cong 0$, then \mbox{$Y\cong X[1]$} 
and we are done. Now we assume that $D\ncong 0$.
  Then $X\ncong 0$ and $Y\ncong 0$, since otherwise $f=0$ or $g=0$, which
  contradicts the minimality of $f$ and $g$. To proceed, let
  $I_X=\{\alpha\in \End_{\cal C}(X)\mid \text{$\alpha$ factors through $f$}\}$ and
  $I_Y=\{\gamma\in \End_{\cal C}(Y)\mid \text{$\gamma$ factors through $g$}\}$.

\smallskip
 {\it Claim.
 $I_X$ is a two-sided ideal of $\End_{\cal C}(X)$ and it is contained in the
 Jacobson radical $J_X$ of $\End_{\cal C}(X)$.}

\smallskip
  {\it Proof.} For any $\alpha\in I_X$ and $\theta\in \End_{\cal C}(X)$,
  there exist $u\in \Hom_{\cal C}(D,X)$ with $\alpha = f\cdot u$, and
  $\omega\in \End_{\cal C}(D)$ with $\theta\cdot f = f\cdot \omega$
  since $f$ is a left $\mathcal{D}$-approximation.
  Therefore,
  $\theta\cdot \alpha = \theta \cdot f\cdot u = f\cdot (\omega\cdot u)$, which
  implies $\theta\cdot \alpha\in I_X$. Similarly $\alpha\cdot \theta\in I_X$ and
  so $I_X$ is a two-sided ideal of $\End_{\cal C}(X)$. To see $I_X\subseteq J_X$,
  it suffices to show, by \cite[Theorem 15.3, p. 166]{AF}, that $1-\alpha$ is
  invertible in $\End_{\cal C}(X)$ for each $\alpha\in I_X$. Let $\alpha = f\cdot u$
  for some $u\in \Hom_{\cal C}(D,X)$.
  In the diagram in $\cal C$
  \[
    \xymatrix{Y[-1]\ar[r]^{-h[-1]} \ar@{=}[d]^{\mathrm{id}} & X \ar[r]^f \ar[d]^{1-\alpha} & D \ar[r]^g \ar@{-->}[d]^\beta  & Y \ar@{=}[d]^{\mathrm{id}}\\
              Y[-1]\ar[r]^{-h[-1]}                      & X \ar[r]^f               & D \ar[r]^g               & Y}
  \]
  the first square commutes because $h[-1]\cdot \alpha = h[-1]\cdot f\cdot u=0$,
  and $\beta$ exists so that the other squares commute 
by axioms of triangulated categories.
  Since $g$ is a right minimal morphism, $\beta$ is an isomorphism,
  and thus again by axioms of triangulated categories, $1-\alpha$ is an isomorphism.

  By similar arguments, $I_Y$ is a two-sided ideal of $\End_{\cal C}(Y)$
  and it is contained in the Jacobson radical $J_Y$ of $\End_{\cal C}(Y)$.

\smallskip
  {\it Claim. There is an algebra isomorphism $\End_{\cal C}(X)/I_X\cong \End_{\cal C}(Y)/I_Y$. }
  
\smallskip
 {\it Proof.}
  We first construct a ring homomorphism $\phi: \End_{\cal C}(X)\to
  \End_{\cal C}(Y)/I_Y$ as follows. For each $\alpha\in \End_{\cal C}(X)$,
  there exists a commutative diagram in $\cal C$
  \[
    \xymatrix{X \ar[r]^f \ar[d]^\alpha  & D \ar[r]^g \ar@{.>}[d]^\beta & Y \ar[r]^h \ar@{-->}[d]^\gamma & X[1]\ar[d]^{\alpha[1]}\\
              X \ar[r]^f                & D \ar[r]^g                   & Y \ar[r]^h                     & X[1]}
  \]
  where $\beta$ exists since $f$ is a left $\mathcal{D}$-approximation,
  and $\gamma$ exists by axioms of triangulated categories. If
  $\beta'\in \End_{\cal C}(D)$ and $\gamma'\in \End_{\cal C}(Y)$
  are different choices such that $f\cdot \beta' = \alpha\cdot f$
  and $g\cdot \gamma' = \beta'\cdot g$, then
  $(\gamma-\gamma')\cdot h=0$ which implies that $\gamma-\gamma' = u\cdot g$
  for some $u\in \Hom_{\cal C}(Y,D)$. In other words, the image of
  $\gamma$ in the quotient ring $\End_{\cal C}(Y)/I_Y$ is well-defined,
  and thus $\phi(\alpha)$ is well-defined.
  Then $\phi$ is a ring homomorphism. It is surjective since $g$ is a right
  $\mathcal{D}$-approximation.

\smallskip
  {\it Claim.} The kernel of $\phi$ equals $I_X$. 
  
\smallskip
{\it Proof.}
  For any $\alpha\in I_X$, there exists $v\in \Hom_{\cal C}(D,X)$ with
  $\alpha  = f\cdot v$.
  So $\phi(\alpha)\cdot h =$ \mbox{$h\cdot \alpha[1]$} 
$= h\cdot f[1]\cdot v[1]=0$
  which implies that $\phi(\alpha)$ factors through $g$ and thus $I_X\subseteq \ker(\phi)$.
  On the other hand, for any \mbox{$\alpha\in \ker(\phi)$}, there exists
  $u\in \Hom_{\cal C}(Y,D)$ with $\phi(\alpha) = u\cdot g$, and so
  $h[-1]\cdot \alpha = \phi(\alpha)[-1]\cdot h[-1] = (u\cdot g\cdot h)[-1]=0$
  which implies that $\alpha$ factors through $f$ and thus 
\mbox{$\ker(\phi)\subseteq I_X$}.

\smallskip
  Altogether, $\phi$ induces an isomorphism
  $\End_{\cal C}(X)/I_X\cong \End_{\cal C}(Y)/I_Y$, and hence further an
  isomorphism $$\End_{\cal C}(X)/J_X\cong \End_{\cal C}(Y)/J_Y.$$
  Since both $X$ and $Y$ are decomposed into strongly indecomposable
  direct summands, it follows that
  both rings $\End_{\cal C}(X)$ and $\End_{\cal C}(Y)$ are semi-perfect rings
  by \mbox{\cite[Theorem 27.6(b), p. 304]{AF}}, and therefore the isomorphism
  above implies that $\End_{\cal C}(X)$ and $\End_{\cal C}(Y)$ have
  the same number of simple left modules, or equivalently the same
  number of indecomposable projective left modules. Consequently, $X$
  and $Y$ have the same number of indecomposable direct summands, that
  is $m=n$.
 \end{proof}

 \begin{Lem} \label{lem:tilting complex over almost self-injective algebras}
  Let $A$ be an almost self-injective algebra, but not self-injective.
  Let $E$ be an additive generator of $\stp{A}$ and let $P$ be
  the unique indecomposable projective left $A$-module
  such that $\pmodcat{A}=\add{(P\oplus E)}$.
  Let $\cpx{T}=(T^i,d^i)$ be an indecomposable radical complex in $\Kb{\pmodcat{A}}$.
  If $\cpx{T}\oplus E$ is a tilting complex over $A$, then at least one of the
  following two assertions holds true:

    {\rm (1)} There exists $r\leq 0$ such that $T^r\cong P$, $T^i=0$ for all $i>0$
  and for all $i < r$, and $T^i \neq 0$ in $\stp{A}$
    for all $r< i \leq 0$;

    {\rm (2)} There exists $s\geq 0$ such that $T^s\cong P$, $T^i=0$ for all
    $i<0$ and for all $ i > s$, and $T^i \neq 0$ in $\stp{A}$
    for all $0\leq i<s$.
 \end{Lem}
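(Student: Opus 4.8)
The plan is to determine the shape of the radical complex $\cpx{T}=(T^i,d^i)$ by exploiting three features of the tilting complex $\cpx{T}\oplus E$: that it generates $\Kb{\pmodcat{A}}$; the orthogonality $\Hom_{\Db{A}}(E,\cpx{T}[n])=0=\Hom_{\Db{A}}(\cpx{T},E[n])$ for $n\neq 0$; and the self-orthogonality $\Hom_{\Db{A}}(\cpx{T},\cpx{T}[n])=0$ for $n\neq 0$ together with the indecomposability of $\cpx{T}$. The key technical device is to translate vanishing of such $\Hom$-groups into constraints on composition factors.

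\emph{Set-up.} One may assume $A$ basic and connected (a disconnected $A$ reduces to the block supporting the indecomposable $\cpx{T}$, which by Step 1 is the unique non-self-injective one). Since $\Ndomdim(A)\geq 1$, Lemma \ref{lem:two dominant dimensions coincide} gives $\stp{A}=\add(E)$ and self-injectivity of $H_A$, so $\Kb{\add(E)}=\Kb{\stp{A}}$ is a triangulated subcategory of $\Kb{\pmodcat{A}}$, closed under direct summands, not containing the stalk complex $P$. As $\nu_{_A}$ restricts to a permutation of the indecomposable projective-injective modules, the unique non-injective indecomposable projective $P$ is sent by $\nu_{_A}$ to the unique non-projective indecomposable injective $I$; comparing socles forces $I\cong I(S)$, the injective envelope of $S:=\operatorname{top}(P)$. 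Consequently, for every $A$-module $M$,
$$\Hom_A(E,M)=0 \iff \Hom_A(M,E)=0 \iff \text{every composition factor of } M \text{ is } \cong S,$$
since the tops, respectively the socles, of the indecomposable summands of $E$ are exactly the simple modules other than $S$. The same permutation-of-projective-injectives argument shows $\Ndomdim(A\opp)\geq 1$, so $A\opp$ is again almost self-injective and not self-injective.

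\emph{Steps 1 and 2.} If every term $T^i$ lay in $\add(E)$, then $\cpx{T}\in\Kb{\add(E)}$, whence $\thick(\cpx{T}\oplus E)\subseteq\Kb{\add(E)}\not\ni P$, contradicting the generation property; so $P$ occurs in some $T^i$. Now let $[a,b]$ be the support of $\cpx{T}$. As $E$ is projective-injective and concentrated in degree $0$ and $\cpx{T}$ is a bounded complex of projectives, exactness of $\Hom_A(-,E)$ and of $\Hom_A(E,-)$ gives $\Hom_{\Db{A}}(\cpx{T},E[-b])\cong\Hom_A(H^b(\cpx{T}),E)$ and $\Hom_{\Db{A}}(E,\cpx{T}[n])\cong\Hom_A(E,H^n(\cpx{T}))$. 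When $b>0$ the first group vanishes; since $d^{b-1}$ is radical, $H^b(\cpx{T})=\Coker(d^{b-1})$ surjects onto $\operatorname{top}(T^b)$, so $\Hom_A(\operatorname{top}(T^b),E)=0$, and by the set-up $T^b\cong P^{\oplus m_b}$ for some $m_b\geq 1$. Applying the contravariant duality $\Hom_A(-,A)\colon\Kb{\pmodcat{A}}\xrightarrow{\sim}\Kb{\pmodcat{A\opp}}$, which carries $\cpx{T}\oplus E$ to a tilting complex over $A\opp$ and negates degrees, and using the case just proved over $A\opp$, we get $T^a\cong P^{\oplus\ell_a}$ with $\ell_a\geq 1$ whenever $a<0$.

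\emph{Step 3 and the main obstacle.} First, $\cpx{T}$ cannot straddle degree $0$: if $a<0<b$ then $T^a\cong P^{\oplus\ell_a}$ and $T^b\cong P^{\oplus m_b}$, and every map $\cpx{T}\to\cpx{T}[b-a]$ is, up to homotopy, a map $T^a\to T^b$ modulo maps factoring through the radical differentials $d^a$ or $d^{b-1}$; the latter all kill $\operatorname{top}(T^b)$, whereas there is a map $P^{\oplus\ell_a}\to P^{\oplus m_b}$ with non-zero top component, so $\Hom_{\Db{A}}(\cpx{T},\cpx{T}[b-a])\neq 0$, contradicting $b-a\neq 0$. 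Hence $b\leq 0$ or $a\geq 0$; and if $a>0$ (symmetrically $b<0$) then $T^0=0$, so $\Hom_{\Db A}(\cpx{T},E[n])=0=\Hom_{\Db A}(E,\cpx{T}[n])$ for \emph{all} $n$, making $\Kb{\pmodcat{A}}=\thick(\cpx{T})\oplus\thick(E)$ decomposable, contrary to connectedness. So $a=0$ (giving case (2)) or $b=0$ (case (1)); by symmetry treat $a=0$. Indecomposability of $\cpx{T}$ forces the support to be the whole interval $[0,b]$; if $a=b$ the complex is a stalk, hence $\cong P$ in degree $0$ by Step 1, which is case (2) with $s=0$. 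Otherwise $b>0$, and it remains to show $P$ occurs only in $T^b$, that $m_b=1$, and that each $T^i$ with $0\leq i<b$ lies in $\add(E)$; this is carried out by a careful inspection of chain maps $\cpx{T}\to\cpx{T}[n]$ for $0<n\leq b$ (and of $\cpx{T}\to E[n]$), again using that the differentials are radical. That last bookkeeping — extracting "exactly one $P$, at the extreme term, with multiplicity one'' from self-orthogonality and indecomposability — is the part I expect to be the main obstacle; everything else is driven by the composition-factor dictionary of the set-up.
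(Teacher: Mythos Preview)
Your Steps~1--3 are essentially correct and parallel the paper's claims (a)--(c): the endpoints of the support satisfy $T^b\in\add(P)$ when $b\neq 0$ and $T^a\in\add(P)$ when $a\neq 0$, and the no-common-summand argument excludes $a<0<b$. Your connectedness argument forcing $a=0$ or $b=0$ is fine too. However, the step you flag as ``bookkeeping'' --- that in the case $a=0<b$ every $T^i$ with $0\leq i<b$ lies in $\add(E)$ and that $T^b\cong P$ with multiplicity one --- is \emph{not} bookkeeping, and your sketch does not supply an argument. The no-common-summand trick works only for the two extreme terms $T^a$ and $T^b$, because a chain map $\cpx{T}\to\cpx{T}[b-a]$ has a single nonzero component; for $0<j<b$ a putative map $\cpx{T}\to\cpx{T}[b-j]$ built from a $P$-summand of $T^j$ and one of $T^b$ must be completed to a genuine chain map in several degrees and then shown to be non-null-homotopic, and there is no reason this succeeds. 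Likewise your cohomology dictionary only constrains $H^i(\cpx{T})$, not the individual terms $T^i$ for interior $i$.

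The paper closes this gap with a different, inductive mechanism. Using the brutal truncation one forms the triangle
\[
\cpx{T}_{<0}[-1]\lraf{f} T^0 \lraf{g} \cpx{T}\lra \cpx{T}_{<0}
\]
(here in the symmetric case $r<s=0$) and checks, from the tilting orthogonality with $E$, that $f$ is a left $\add(E)$-approximation and $g$ a right one, with $f$ left minimal (this uses that $\cpx{T}$ is an indecomposable radical complex) and $\Hom_{\K{A}}(g,E)$ injective. The $\mathcal{D}$-split sequence lemma (Lemma~\ref{lem:D-split sequences}) then yields two consequences that drive an induction on length: $\cpx{T}_{<0}$ is again indecomposable, and $\cpx{T}_{<0}[-1]\oplus E$ is again a tilting complex. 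Reapplying claims (a)--(c) to this shorter complex shows its degree-$0$ term (the old $T^{-1}$) lies in $\add(E)$, and one continues until the stalk case forces $T^r\cong P$. This mutation/induction step is the missing idea in your proposal; without it (or some genuine substitute), the argument does not reach the conclusion.
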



 \begin{proof}
   {\it Special case.} The complex $\cpx{T}$ has only one nonzero term,
   that is, $\cpx{T} = Q[m]$
   for some $Q\in \pmodcat{A}$ and $m\in \mathbb{Z}$. Then
   $Q$ is indecomposable since $\cpx{T}$ is indecomposable.
   If \mbox{$Q\in \stp{A}$}, then
   $\cpx{T}\oplus E\in \Kb{\stp{A}}\subsetneq \Kb{\pmodcat{A}}$
   which contradicts the assumption that $\cpx{T}\oplus E$, as a tilting complex, generates
   $\Kb{\pmodcat{A}}$.
   Since each indecomposable projective left
   $A$-module is either isomorphic to $P$ or strongly projective-injective,
   by the definition of almost self-injective algebras
   $Q$ must be isomorphic to $P$. We still have to show that $m=0$.
   Assume $m\neq 0$. Then
   by the self-orthogonality of tilting complexes,
   \mbox{$\Hom_{\Kb{\pmodcat{A}}}(P[m]\oplus E, E[m])=0$},
   which implies $\Hom_A(P,E)=0$, a contradiction to $\Ndomdim(A)\geq 1$.
   So, $\cpx{T}=P$, which satisfies both conditions (1) and (2).

   In the {\it general case},
   $\cpx{T}=(T^i, d^i)$ is an indecomposable radical complex
   of the following form
   \begin{align*}
   0\lra T^r\lraf{d^r} T^{r+1}\lra \cdots \lraf{d^{s-1}} T^s\lra 0
   \end{align*}
   where $T^i$ are nonzero projective left $A$-modules and $r<s$.
   Using the self-orthogonality of the
   tilting complex $\cpx{T}\oplus E$, we are going to check:

\smallskip
   {\it Claim. {\rm (a)} $T^r$ and $T^s$ have no nonzero common direct summands;
   
   {\rm (b)} if $r\neq 0$, then $T^r\in \add(P)$; and
   
  {\rm (c)} if $s\neq 0$, then $T^s\in \add(P)$.}

\smallskip
   {\it Proof.} To see (a), assume on the contrary
   that $K$ is a nonzero common direct summand
   of both $T^r$ and $T^s$. Let $u$ be a split epimorphism
   from $T^r$ to $K$ and $v$ be a split monomorphism from $K$
   to $T^s$. Then the composition $u\cdot v$ from $T^r$ to $T^s$
   defines a nonzero morphism in $\Hom_{\Kb{\pmodcat{A}}}(\cpx{T}, \cpx{T}[s-r])$,
   because $\cpx{T}$ is a radical complex. But this contradicts
   the assumption that $\cpx{T}$ is self-orthogonal and $s-r>0$, which forces
   any morphism from $\cpx{T}$ to $\cpx{T}[s-r]$ in $\Kb{\pmodcat{A}}$
   to be zero. 
   
   Similarly, (b) and (c) follow since for any $m\neq 0$,
   \[
   \Hom_{\Kb{\pmodcat{A}}}(\cpx{T},E[m])=0=
   \Hom_{\Kb{\pmodcat{A}}}(E, \cpx{T}[m]).
   \]

   As a consequence of the claim, $s=0$ or $r=0$. Indeed, if
   $r\neq 0$ and $s\neq 0$, then by (b) and (c),
   $T^r$ and $T^s$ have a common direct summand $P$, which contradicts
   (a). 

   We will finish the proof by analysing these two cases.

   \indent \textit{Case $\mathit{r<s=0}$.} By (b), $T^r\in \add(P)$ and
   then $T^0\in \add(E)$  by (a). Let $\cpx{T}_{<0}$ be
   the brutal truncation of $\cpx{T}$, which by definition provides
   the following
   triangle in $\Kb{\pmodcat{A}}$:
   \begin{align} \label{eqn:key  triangle}
    \cpx{T}_{<0}[-1]\lraf{f} T^0 \lraf{g} \cpx{T}\lra \cpx{T}_{<0}
   \end{align}
  where $f$ is the chain morphism induced by $d^{-1}:T^{-1}\to T^0$,
  and $g$ is the chain morphism induced by $\mathrm{id}: T^0\to T^0$.
  Applying $\Hom_{\K{A}}(-,E)$ to this  triangle gives
  the short exact sequence
  \begin{align*}
    0 \ra \Hom_{\K{A}}(\cpx{T},E)\raf{g^*} \Hom_{\K{A}}(T^0,E)
    \raf{f^*} \Hom_{\K{A}}(\cpx{T}_{<0}[-1],E)\ra 0
  \end{align*}
  since $\Hom_{\K{A}}(\cpx{T}_{<0}, E)=0$ trivially and
  $\Hom_{\K{A}}(\cpx{T}[-1],E)=0$ by $\cpx{T}\oplus E$ being tilting.
  In particular, $g^*$ is injective, and $f$ is a left $\add(E)$-approximation.

\smallskip
  {\it Claim.} The morphism $f$ is left minimal. 
  
\smallskip
   {\it Proof.}
  For any $u:T^0\to T^0$ with $f\cdot u = f$, we have
  $d^{-1}\cdot u = d^{-1}$. Thus $u$ defines a chain
  morphism
  $\cpx{\gamma}: \cpx{T}\to \cpx{T}$ in $\Kb{\pmodcat{A}}$
  with $\gamma^0=u$ and
  $\gamma^i = \mathrm{id}$ for all $i\neq 0$. Now $\cpx{T}$ being
  an indecomposable complex in $\Kb{\pmodcat{A}}$ implies that
  $\End_{\Kb{\pmodcat{A}}}(\cpx{T})$ is a local $k$-algebra, and therefore
  $\cpx{\gamma}$ is either nilpotent or invertible.
  If $\cpx{\gamma}$ is nilpotent in $\End_{\Kb{\pmodcat{A}}}(\cpx{T})$, say
  ${(\cpx{\gamma})}^m=0$ for some $m\in \mathbb{Z}$,
 then there exists a homotopy
 morphism \mbox{$\cpx{s} = \{s^i\}:\cpx{T}\ra \cpx{T}[-1]$}
 such that $(\gamma^i)^m = s^i\cdot d^{i-1}+d^i\cdot s^{i+1}$. By
 definition of $\cpx{T}$, the map $d^{r-1}$ is zero.
  Thus, \mbox{$\mathrm{id} = d^r\cdot s^{r+1}$} and hence
  $d^{r}$ is a split monomorphism. But this contradicts
  $\cpx{T}$ being a radical complex. It follows that $f$
  is a left minimal morphism.

\smallskip
  Applying $\Hom_{\K{A}}(E,-)$ to the  triangle (\ref{eqn:key triangle})
  implies that $g$ is a right $\add(E)$-approximation. Since $g$ 
  induces an isomorphism
  $\Hom_{\K{A}}(T^0, \cpx{T})\cong \Hom_A(T^0,T^0)$, $g$
  is a right minimal morphism.
  Consequently, the  triangle (\ref{eqn:key  triangle}) is
  a $\add(E)$-split sequence, with $g^*$ being an injective morphism.
  Thus, by Lemma \ref{lem:D-split sequences} (1) and (3),
  $\cpx{T}_{<0}$ is an indecomposable radical complex, and
  $\cpx{T}_{<0}[-1]\oplus E$
  is a tilting complex over $A$. Replacing $\cpx{T}$ by $\cpx{T}_{<0}[-1]$
  and repeating the same arguments $(r-1)$ times, we finally get that
  $T^r$ is indecomposable and $T^r\oplus E$ is a tilting complex over $A$,
  whereby $T^r$ is isomorphic to $P$ and the assertion (1)
  holds.

  \indent \textit{Case $0 = r<s$.} By similar arguments as in the
  previous case, the assertion (2)
  in the statement can be verified for $\cpx{T}$.

  Since the case $r=s=0$ has been shown already, all cases have been settled.
 \end{proof}

 With these preparations, we are now able to
 prove Theorem \ref{thm:derived equivalences for almost self-injective}.

 \begin{proof}[of Theorem
\ref{thm:derived equivalences for almost self-injective}]


First we assume that one of $A$ or $B$ is self-injective.
When the ground field is algebraically closed, being self-injective is
invariant under derived equivalence, by \cite{AR04}. Under our assumption
$\Ndomdim \geq 1$, invariance of being self-injective can be shown as follows,
without any restriction on the ground field. By assumption,
both $A$ and $B$ have $\Ndomdim \geq 1$. Then $A$ is self-injective if
and only if $\Kb{\pmodcat{A}} = \Kb{\stp{A}}$, and similarly for $B$.
By Theorem \ref{thm:restriction theorem}, the given derived
equivalence $\mathcal{F}$ induces an equivalence $\Kb{\stp{A}}\raf{\sim}
\Kb{\stp{B}}$ of triangulated subcategories.
Hence, both $A$ and $B$ are self-injective.
   Now, \cite[Proposition 3.8]{HuXi11} implies that $\mathcal{F}$
   is an almost $\nu$-stable derived equivalence up to degree shift.

Next, we assume that neither $A$ nor $B$ is self-injective.
   Let $E_A$ and $E_B$ be basic additive generators of $\stp{A}$
   and $\stp{B}$ respectively. Then the number of indecomposable
   direct summands of $E_B$ is exactly one less than the number
   of the simple left $B$-modules.
   Since $\Ndomdim(A)\geq 1$ and $\Ndomdim(B)\geq 1$,
   the derived equivalence $\mathcal{F}$ induces
   an equivalence $\Kb{\stp{A}}\raf{\sim} \Kb{\stp{B}}$ of triangulated subcategories,
   again by Theorem \ref{thm:restriction theorem}.
   Let $\cpx{\mathcal{E}}$ be the image of $E_A$ in  $\Kb{\pmodcat{B}}$
   under the equivalence.
   Then $\add(\cpx{\mathcal{E}})$ generates $\Kb{\stp{B}}$, and
   $\Hom_{\Kb{\stp{B}}}(\cpx{\mathcal{E}}, \cpx{\mathcal{E}}[i])=0$ unless $i=0$.
   Without loss of generality, we assume that $\cpx{\mathcal{E}}$ is,
   up to degree shift, a radical complex of the form
   $$\cpx{\mathcal{E}}:\qquad 0\lra S^{-n}\lra\cdots\lra S^{-1}\lra S^0\lra 0.$$
   where $S^i\in \add(E_B)$ for all $i$.
   By \cite[Proposition 4.1]{HuXi},
   there exists a complex $\cpx{X}$ in
   $\Kb{\pmodcat{B}}$ such that $\cpx{X}\oplus \cpx{\mathcal{E}}$ is a
   basic tilting complex over $B$, inducing an almost $\nu$-stable
   derived equivalence $$\mathcal{G}: \Db{B}\lraf{\sim} \Db{C}$$ where
   $C=\End_{\Kb{\pmodcat{B}}}(\cpx{X}\oplus \cpx{\mathcal{E}})$.
   Let $\cpx{T}$ be a radical complex such that $\mathcal{F}(\cpx{T})\cong \cpx{X}$.
   Then $(\mathcal{G}\circ \mathcal{F})(\cpx{T}\oplus E_A)\cong C$.
   In particular, $\cpx{T}\oplus E_A$ is a tilting complex over $A$
   and $\cpx{T}$ is indecomposable.
   Hence by Lemma \ref{lem:tilting complex over almost self-injective algebras} and
   \cite[Theorem 1.3(5)]{Hu12}, $\mathcal{G}\circ \mathcal{F}$ is an
iterated almost $\nu$-stable
   derived equivalence up to degree shift,
and thus $\mathcal{F} = \mathcal{G}^{-1}\circ (\mathcal{G}
   \circ \mathcal{F})$ is an iterated almost $\nu$-stable derived equivalence.
 \end{proof}


 \begin{Koro}
 Let $A$ and $B$ be self-injective $k$-algebras and let $X$ (respectively $Y$)
 be an indecomposable left $A$-module (respectively left $B$-module).
 If $\End_A(A\oplus X)$ and $\End_B(B\oplus Y)$ are derived equivalent, then
 every standard derived equivalence between them
 is an iterated almost $\nu$-stable derived
 equivalence (up to degree shift), and the two endomorphism
 algebras are stably equivalent of Morita type.
 \end{Koro}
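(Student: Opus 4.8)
The plan is to identify both algebras $\End_A(A\oplus X)$ and $\End_B(B\oplus Y)$ as \emph{almost self-injective algebras} and then to invoke Theorem~\ref{thm:derived equivalences for almost self-injective} directly. Thus the only genuine work is the verification of almost self-injectivity; the rest is a citation. This verification is where care is needed, but it amounts to bookkeeping built on the Claim appearing in the proof of Proposition~\ref{prop:characterise morita algebras}.

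First I would treat $\Lambda:=\End_A(A\oplus X)$. Since ${}_AA$ is a direct summand of $A\oplus X$, the module $A\oplus X$ is a generator over the self-injective algebra $A$, so $\Lambda$ is a Morita algebra in the sense of Definition~\ref{def:morita algebras} and hence $\Ndomdim(\Lambda)\geq 2\geq 1$ by Proposition~\ref{prop:characterise morita algebras}. The indecomposable projective $\Lambda$-modules are exactly the modules $\Hom_A(A\oplus X, Z)$ with $Z$ ranging over the indecomposable direct summands of $A\oplus X$, that is, over the indecomposable projective $A$-modules together with $X$. Let $E$ be the direct sum of all pairwise non-isomorphic indecomposable projective $A$-modules. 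Applying the Claim in the proof of Proposition~\ref{prop:characterise morita algebras} with $H=A$ and $M=A\oplus X$, the $\Lambda$-module $\Hom_A(A\oplus X, E)$ is strongly projective-injective, and hence so is each of its direct summands $\Hom_A(A\oplus X, P)$ with $P$ an indecomposable projective $A$-module. Therefore every indecomposable projective $\Lambda$-module, with the sole possible exception of $\Hom_A(A\oplus X, X)$, is projective-injective. In particular $\Lambda$ has at most one non-injective indecomposable projective, and together with $\Ndomdim(\Lambda)\geq 1$ this says precisely that $\Lambda$ is almost self-injective. The same argument shows that $\End_B(B\oplus Y)$ is almost self-injective.

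It then remains to apply Theorem~\ref{thm:derived equivalences for almost self-injective} to the derived equivalent almost self-injective algebras $\End_A(A\oplus X)$ and $\End_B(B\oplus Y)$: any standard derived equivalence between them is an iterated almost $\nu$-stable derived equivalence up to degree shift, and consequently (again by that theorem) the two algebras are stably equivalent of Morita type. I expect the main obstacle --- such as it is --- to be the identification of the strongly projective-injective $\Lambda$-modules, and hence of the one indecomposable projective that may fail to be injective; once that is in place, the statement follows from results already established.
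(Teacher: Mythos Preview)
Your proposal is correct and follows exactly the same approach as the paper: verify that both endomorphism algebras are almost self-injective and then invoke Theorem~\ref{thm:derived equivalences for almost self-injective}. The paper's own proof is just two sentences and leaves the verification of almost self-injectivity implicit (citing only Definition~\ref{def:morita algebras}), whereas you have spelled out precisely why at most one indecomposable projective can fail to be injective; your added detail is accurate and welcome.
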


 \begin{proof}
   By Definition \ref{def:morita algebras}, both endomorphism
   algebras are almost self-injective algebras. The statements then follow directly
   from Theorem \ref{thm:derived equivalences for almost self-injective}.
 \end{proof}

 \noindent \textit{Example.}
 Theorem \ref{thm:derived equivalences for almost self-injective}
 may fail if the algebras $A$ and $B$ are not assumed to be almost
 self-injective. Here is an example.
 Let $\Lambda=k[x,y]/(x^2,y^2)$, and let $S$
 be the unique simple $\Lambda$-module.
 Then $\Lambda$ is a self-injective $k$-algebra and the
 Auslander-Reiten sequence
 \[
   0\ra \Omega^2 S\ra \rad(\Lambda)\oplus\rad(\Lambda)\ra S\ra 0
 \]
 is an $\add(\Lambda\oplus\rad(\Lambda))$-split sequence in the
 sense of \cite{HuXi11}.
 By \cite[Theorem 1.1]{HuXi11}, the endomorphism algebras
 $A:=\End_{\Lambda}(\Lambda\oplus\rad(\Lambda)\oplus S)$ and
 $B:=\End_{\Lambda}(\Lambda\oplus\rad(\Lambda)\oplus \Omega^2S)$ are
 derived equivalent.
 By direct checking, both $A$ and $B$ are seen to
 have $\nu$-dominant dimension at least two, but
 none of them is an almost self-injective algebra. Since
 $\gldim (A)=2$ and
 $\gldim(B) =3$, the algebras $A$ and $B$ cannot be
 stably equivalent of Morita type.

\medskip
 Combining Proposition \ref{prop:almost nu-stable preserve dimensions} and
 Theorem \ref{thm:derived equivalences for almost self-injective} yields
 the {\em First Invariance Theorem}:

 \begin{Koro}\label{cor:class almost self-injective algebras}
   Derived equivalences between almost self-injective algebras preserve both
   global dimension and dominant dimension.
 \end{Koro}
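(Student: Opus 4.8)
The plan is to obtain the statement as an immediate consequence of the two results just established, Theorem~\ref{thm:derived equivalences for almost self-injective} and Proposition~\ref{prop:almost nu-stable preserve dimensions}. Since global dimension and dominant dimension are invariants attached to an individual algebra, and not to a chosen functor, it suffices to prove that whenever $A$ and $B$ are almost self-injective algebras which happen to be derived equivalent, one has $\gldim(A)=\gldim(B)$ and $\domdim(A)=\domdim(B)$. So fix such $A$ and $B$.

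First I would replace an arbitrary derived equivalence between $A$ and $B$ by a standard one: by Rickard's theory recalled in Section~\ref{subsectionstandarddereq}, derived equivalence of $A$ and $B$ guarantees the existence of a standard derived equivalence $\mathcal{F}:\Db{A}\xrightarrow{\sim}\Db{B}$ coming from a two-sided tilting complex. This replacement affects neither the algebras $A$, $B$ nor their homological dimensions, so it is harmless. Now Theorem~\ref{thm:derived equivalences for almost self-injective} applies directly to $\mathcal{F}$ and tells us that $\mathcal{F}$ is an iterated almost $\nu$-stable derived equivalence, up to degree shift. The qualifier ``up to degree shift'' is innocuous here: a shift $[n]$ is an auto-equivalence of $\Db{A}$ (respectively $\Db{B}$) and does not alter which pair of algebras is being related, so $A$ and $B$ are genuinely connected by an iterated almost $\nu$-stable derived equivalence in the sense of the definition in Section~\ref{sec:derivedequivalences}.

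Finally, I would invoke Proposition~\ref{prop:almost nu-stable preserve dimensions}, which asserts precisely that iterated almost $\nu$-stable derived equivalences preserve both $\gldim$ and $\domdim$; applied to the equivalence produced in the previous step it yields $\gldim(A)=\gldim(B)$ and $\domdim(A)=\domdim(B)$. There is no real obstacle at this stage: all the substance has already been spent, namely in Theorem~\ref{thm:derived equivalences for almost self-injective} (whose proof rests on Lemma~\ref{lem:tilting complex over almost self-injective algebras}, the analysis of $\nuinv{A}$ in Proposition~\ref{prop:nu-stable complexes}, and the characterisations of \cite{Hu12}) and in Proposition~\ref{prop:almost nu-stable preserve dimensions} itself, taken from \cite{Hu12}. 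The only point worth making explicit in the write-up is the reduction to the standard case and the remark that the degree-shift ambiguity plays no role, both of which are routine.
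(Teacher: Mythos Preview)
Your proposal is correct and follows exactly the paper's own approach: the corollary is stated as an immediate combination of Theorem~\ref{thm:derived equivalences for almost self-injective} and Proposition~\ref{prop:almost nu-stable preserve dimensions}. Your additional remarks on reducing to a standard equivalence and on the harmlessness of the degree shift are sound elaborations, but the paper simply omits them as routine.
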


 \subsection{Algebras with anti-automorphisms preserving simples}

 Many algebras have anti-automorphisms that preserve simple modules.
 Prominent examples are involutory such anti-automorphisms, often called
 dualities, which are fundamental ingredients of the definition of
 cellular algebras. Among the examples are group algebras of symmetric
 groups in any characteristic, many Hecke algebras, Brauer algebras,
 Temperley-Lieb algebras and many classes of Schur algebras. In particular,
 quasi-hereditary algebras with duality occur frequently in algebraic Lie
 theory. They have been studied in \cite{FK11a,FK15,FM} from the point of view
 of dominant dimension, motivating the concept of gendo-symmetric algebras
 and our present investigation of invariance properties. The
 anti-automorphisms defining dualities often occur as shadows of Cartan
 involutions. A much larger class of algebras (with anti-automorphisms not
 required to be involutions)
 will be shown now to satisfy homological invariance
 properties. After establishing various elementary properties of the class of
 algebras with anti-automorphism preserving simples, 
 both global and dominant dimension will get
 identified explicitly in the derived category, in terms of complexes
 occurring in canonical triangles.

 Now we state precisely what we mean by an anti-automorphism preserving
simples, and then we collect
 basic properties of algebras with such anti-automorphisms.
See also \cite{CPS96,FK11a,MO04}
 and the references therein for further discussion of the special case of
dualities.
 Let $A$ be a $k$-algebra and let
$\gamma: A\to A$ be an algebra anti-automorphism.
 For each (left) $A$-module $M$, the $\gamma$-twist of $M$, denoted by
 $^\gamma\!M$, is defined to be
 the right $A$-module that equals $M$ as a $k$-vector space,
 and is equipped with the right $A$-action:
 \[
    m\cdot a = \gamma(a)m\qquad \forall\ a\in A, m\in M
    \]
 The notation $\cdot$ is reserved for this action.
 We define the twist of a right $A$-module or an $A$-bimodule similarly.
 For an $A$-bimodule $M$, there are two different right $A$-module
 structures on ${^\gamma}\!M$: the usual one from the right $A$-action
 on $M$ and the twisted one from the left $A$-action on $M$.
 To emphasize the difference, we
 write $(^\gamma\!M)_A$ for the former and ${^\gamma}\! ({_AM})$ for
 the latter.
 If for every simple $A$-module $S$, the $k$-dual of $^{\gamma\!}S$ is
 isomorphic to $S$ as $A$-modules, then we say that \emph{$\gamma$
fixes all simple modules}.

A $k$-algebra will be called $k$-split, if the endomorphism rings of simple
$A$-modules are just $k$. Then simple $A$-bimodules are tensor products of
simple left $A$-modules with simple right $A$-modules.


 \begin{Lem}\label{lem:twist bimodule}
 Let $A$ be a split $k$-algebra with an anti-automorphism $\omega$ fixing
 all simple $A$-modules.  Then:

   $(1)$ For each primitive idempotent $e$ in $A$, the idempotent $\omega(e)$ is conjugate to $e$ in $A$.

   $(2)$ For each projective-injective $A$-module $P$, the $A$-module $\nu_{_A}(P)$ is projective-injective.

   $(3)$ $\domdim(A) = \Ndomdim(A)$. In particular, $A$ is a Morita algebra if and only if $\domdim(A)\geq 2$.

   $(4)$ ${^\omega\!}\!A^{\omega}\cong A$, ${^\omega\!}\DD(A)^\omega \cong \DD(A)$, and
   $\DD({}^{\omega}\! L^{\omega})\cong L$ in $\modcat{A^{\mathrm{e}}}$, for every simple $A$-bimodule $L$.

   $(5)$ Let $e$ be a basic idempotent such that $\add(Ae)=\stp{A}$.    Then in $\Modcat{A^{\mathrm e}}$, $$\Ext^i_{eAe}(eA,eA)\cong \Ext^i_{eAe}(Ae,Ae), \qquad \text{for all $i\geq 0$}.$$

   $(6)$ If $M$ is a non-zero $A$-bimodule with ${}^{\omega}\!M^{\omega}\cong M$
   in $\modcat{A^{\mathrm{e}}}$, then $\Hom_{A^{\mathrm{e}}}(M, \DD(M))\neq 0$.

   $(7)$ If $M$ and $N$ are $A$-bimodules with ${}^{\omega}\!M^{\omega}\cong M$ and
   ${}^{\omega}\! N^{\omega}\cong N$ in $\modcat{A^e}$, then in $\modcat{A^{\mathrm{e}}}$
   $$\Ext^i_A({_A}M,{_A}N)\cong {}^\omega \Ext^i_A(M_A,N_A)^\omega, \qquad \text{for all $i\geq 0$}.$$
   
   $(8)$ If $\cpx{M}$ is a complex in $\D{A^{\rm e}}$ and $m$ is an integer such that:  
   
   \quad\quad {\rm (a)} ${}^{\omega}\!H^m(\cpx{M})^{\omega}\cong H^m(\cpx{M})\neq 0$, and
   
   \quad\quad {\rm (b)}  $H^i(\cpx{M})=0$ for all $i>m$, 
  
 then $m=\max\{d|\Hom_{\D{A^{\rm e}}}(\cpx{M}[d], (\DD\cpx{M})[-d])\neq 0\}$.
  \end{Lem}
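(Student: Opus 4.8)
For part (8), the plan is to compute the Hom-space $\Hom_{\D{A^{\rm e}}}(\cpx{M}[d],(\DD\cpx{M})[-d])$ for every integer $d$ by collapsing it onto a single cohomology degree via Lemma \ref{lem:morphisms in derived categories}(3), and then to detect non-vanishing precisely at $d=m$ using part (6) of the Lemma.

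First I would record the cohomology ranges involved. The functor $\DD=\Hom_k(-,k)$ is exact and carries an $A$-bimodule to an $A$-bimodule, so $\DD\cpx{M}$ is again an object of $\D{A^{\rm e}}$ and $H^j(\DD\cpx{M})\cong\DD\bigl(H^{-j}(\cpx{M})\bigr)$ for all $j$. Hypothesis (b) makes this vanish for $j<-m$, and hypothesis (a) gives $H^{-m}(\DD\cpx{M})\cong\DD\bigl(H^{m}(\cpx{M})\bigr)\neq 0$. Consequently $\cpx{M}[d]$ has no cohomology in degrees $>m-d$, while $(\DD\cpx{M})[-d]$ has no cohomology in degrees $<d-m$.

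Then I would split into two cases. For $d=m$ the two truncation bounds meet in degree $0$, so Lemma \ref{lem:morphisms in derived categories}(3) (applied with $n=0$) gives an isomorphism $\Hom_{\D{A^{\rm e}}}(\cpx{M}[m],(\DD\cpx{M})[-m])\cong\Hom_{A^{\rm e}}\bigl(H^{m}(\cpx{M}),\DD(H^{m}(\cpx{M}))\bigr)$. By hypothesis (a), $N:=H^{m}(\cpx{M})$ is a non-zero $A$-bimodule with ${}^{\omega}\!N^{\omega}\cong N$ in $\modcat{A^{\rm e}}$, so part (6) applies and yields $\Hom_{A^{\rm e}}(N,\DD(N))\neq 0$; hence $d=m$ lies in the set $\{d\mid\Hom_{\D{A^{\rm e}}}(\cpx{M}[d],(\DD\cpx{M})[-d])\neq 0\}$. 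For $d>m$ one has $m-d<0<d-m$, so the same Lemma \ref{lem:morphisms in derived categories}(3) with $n=0$ gives $\Hom_{\D{A^{\rm e}}}(\cpx{M}[d],(\DD\cpx{M})[-d])\cong\Hom_{A^{\rm e}}\bigl(H^{d}(\cpx{M}),H^{-d}(\DD\cpx{M})\bigr)$, which is zero because $H^{d}(\cpx{M})=0$ by (b). Combining the two cases, the set above contains $m$ but no integer exceeding $m$, so its maximum equals $m$, as claimed.

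There is no real obstacle internal to this part: once part (6) is in hand the argument is just bookkeeping with shifts and with the action of $\DD$ on cohomology, and the genuine content sits in (6), whose hypotheses are handed over verbatim by (a). The only points that want a little care are that $\cpx{M}$ is not assumed bounded below — harmless, since Lemma \ref{lem:morphisms in derived categories}(3) is stated for arbitrary complexes — and that each $H^{i}(\cpx{M})$ is finitely generated because $A^{\rm e}$ is noetherian, which is what allows part (6) to be invoked.
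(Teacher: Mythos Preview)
Your proof is correct and follows essentially the same route as the paper's: both use Lemma~\ref{lem:morphisms in derived categories}(3) to collapse $\Hom_{\D{A^{\rm e}}}(\cpx{M}[d],(\DD\cpx{M})[-d])$ onto degree-zero cohomology, invoke part~(6) at $d=m$ for non-vanishing, and observe vanishing for $d>m$ from the cohomology bounds. Your version is slightly more explicit in spelling out the cohomology ranges of $\DD\cpx{M}$ and in applying Lemma~\ref{lem:morphisms in derived categories}(3) separately in the case $d>m$, but the argument is the same.
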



 \begin{proof}
  (1) Let $\iota: {}^{\omega^{-1}}\!\!(Ae)\ra \omega(e)A$ be the $k$-linear
  map defined by $\iota(ae)=\omega(e)\omega(a)$, for any $ae\in Ae$.
  Then $\iota(ae\cdot x) = \iota(\omega^{-1}(x)ae)=\omega(e)\omega(a)x$ for
  any $x\in A$. (Note that here $Ae$ gets twisted by $\gamma = \omega^{-1}$.)
  So $\iota$ is a right $A$-module isomorphism.
  Now for each simple left $A$-module $S$,
  \begin{align*}
    \DD(\omega(e)S) \cong \Hom_A(\omega(e)A, \DD(S))
    & \cong \Hom_A({}^{\omega^{-1}}\!\!(Ae), \DD(S)) \\
   &  \cong \Hom_A(Ae, \DD(S)^\omega)
    \cong \Hom_A(Ae, S)\cong eS
  \end{align*}
  by the assumption on $\omega$.
  Therefore, $e$ and $\omega(e)$ are conjugate idempotents in $A$.

  (2) We may assume that $P$ is an indecomposable projective-injective
$A$-module,
  thus of the form $Ae$ for some idempotent $e$. Then $\nu_{_A}(P) = \DD(eA)$
  and $eA\cong \omega(e)A\cong {}^{\omega^{-1}}(Ae)$ by (1). Therefore,
  $eA$ is projective-injective and  $\nu_{_A}(P) \cong \DD(eA)$ is so, too.


 (3) Let $Ae$ be a basic projective-injective $A$-module such that every
 projective-injective $A$-module $P$ belongs to $\add(Ae)$. Then $\nu_{_A}(Ae)\cong Ae$
 as $A$-modules by (2). In other words, every projective-injective $A$-module
 is a strongly projective-injective $A$-module. So by definition,
 $\domdim (A) = \Ndomdim (A)$. The claim about Morita algebras
 then follows from Proposition \ref{prop:characterise morita algebras}.

 (4)
  Let $\theta: A\to {^\omega\!\!}A^\omega$ be the $k$-linear morphism defined
  by: $\theta(a)=\omega(a)$ for $a\in A$. Then for any $x,y\in A$,
  there are equalities
  $\theta(xay)=\omega(y)\omega(a)\omega(x) = x\cdot \theta(a)\cdot y$ in ${^\omega\!}A^\omega$.
  Thus $\theta$ is an $A$-bimodule isomorphism and so is the $k$-dual of $\theta$, that is,
  ${^\omega\!}\DD(A)^\omega\cong  \DD({^\omega\!}A^\omega)\cong \DD(A)$ as $A$-bimodules.
  Since $A$ is a split $k$-algebra, it follows that every simple $A$-bimodule $L$ is
  of the form ${}_AS\otimes \DD(S')_A$, where $S$ and $S'$ are simple left $A$-modules.
  Thus, there is a series of isomorphisms of $A$-bimodules
  \begin{align*}
   \DD({}^{\omega}L^{\omega}) & \cong \DD({}^{\omega}S\otimes \DD(S')^{\omega})
                             \cong \DD({}^{\omega}S\otimes \DD({}^{\omega}S'))
                             \cong \DD({}^{\omega}S\otimes S') \\ &\cong \Hom_k(S', \DD({}^{\omega}S))
                             \cong \Hom_k(S', S)\cong S\otimes \DD(S')\cong L &
  \end{align*}
  as the anti-automorphism $\omega$ fixes simple $A$-modules by assumption.

 (5) Since $\domdim (A) \geq 2$, the minimal faithful $A$-module $Ae$ is basic and
 projective-injective. By (2), $\nu_{_A}(Ae)\cong \DD(eA)$ is also
 basic and projective-injective. As a result, $\DD(eA)\cong Ae$ as $A$-modules, and
 therefore $\DD(eA)_\tau\cong Ae$ as $(A,eAe)$-bimodules for some automorphism
 $\tau$ of $eAe$. So for all $i \geq 0$
 \[
    \Ext^i_{eAe}(eA,eA)\cong \Ext^i_{eAe}(\DD(eA), \DD(eA))\cong
    \Ext^i_{eAe}(\DD(eA)_\tau, \DD(eA)_\tau)\cong \Ext^i_{eAe}(Ae,Ae).
 \]

  (6) Since $M\neq 0$, there exists a simple $A$-bimodule $L$ such that
  $\Hom_{A^{\mathrm{e}}}(M, L)\neq 0$. Therefore
  \begin{align*}
    \Hom_{A^{\mathrm{e}}}(L, \DD(M)) \cong  \Hom_{A^{\mathrm{e}}}(\DD({}^{\omega}\!L^{\omega}), \DD({}^{\omega}\!M^{\omega}))
    \cong  \Hom_{A^{\mathrm{e}}}({}^{\omega}\!M^{\omega}, {}^{\omega}\!L^{\omega})
     \cong \Hom_{A^{\mathrm{e}}}(M, L)\neq 0
  \end{align*}
  Thus the composition of an epimorphism from $M$ to $L$ followed by a monomorphism
  from $L$ to $\DD(M)$ in $\modcat{A^{\mathrm{e}}}$ defines a nonzero morphism
  from $M$ to $\DD(M)$. In particular, $\Hom_{A^{\mathrm{e}}}(M,\DD(M))\neq 0$.

  (7) Since ${}^{\omega}\!M^{\omega}\cong M$ and
   ${}^{\omega}\!N^{\omega}\cong N$ in $\modcat{A^e}$ as $A$-bimodules, it follows
   that
   \[
   \Ext^i_A({}_AM,{}_AN)\cong
   \Ext^i_A({}_A^{\omega}\!M^{\omega},{}_A^{\omega}\!N^{\omega})
   \cong \Ext^i_A({}^{\omega}\!(M_A), {}^{\omega}\!(M_A))
   \cong {}^\omega\Ext^i_A(M_A, N_A)^\omega
   \]
   as $A$-modules for all $i\geq 0$.
   
 $(8)$ By definition, the complex $\cpx{M}[m]$   has vanishing cohomology in degrees
 larger than $0$,  and $(\DD\cpx{M})[-m]$ has vanishing cohomology in degrees smaller than $0$. This implies that $\Hom_{\D{A^{\rm e}}}(\cpx{M}[d], (\DD\cpx{M})[-d])=0$ for all $d>m$. Moreover, by Lemma \ref{lem:morphisms in derived categories}(3),   $\Hom_{\D{A^{\rm e}}}(\cpx{M}[m], (\DD\cpx{M})[-m])$ is isomorphic to $\Hom_{A^{\rm e}}(H^m(\cpx{M}), \DD H^m(\cpx{M}))$, which is nonzero by (6). 
 \end{proof}

 \begin{Theo}[(Second Invariance Theorem)]
   \label{thm:class with anti-automorphisms}
  Let $A$ and $B$ be derived equivalent split $k$-algebras.

  {\rm (a)} If both $A$ and $B$ have anti-automorphisms fixing all simple 
modules, then \mbox{$\gldim(A)=\gldim(B)$}.

  {\rm (b)} If furthermore both $A$ and $B$ have dominant dimension at least 
$1$, then \mbox{$\domdim(A)=\domdim(B)$}.
 \end{Theo}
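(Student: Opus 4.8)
The guiding idea is to express $\gldim$ and (in part (b)) $\domdim$ as genuinely derived-invariant quantities attached to explicit complexes of bimodules, with Lemma~\ref{lem:twist bimodule}(8) as the device that converts the statement ``the top nonzero cohomology bimodule of $\cpx{M}$ is twist-invariant'' into the manifestly derived-invariant expression $\max\{d\mid \Hom_{\D{\Modcat{A^{\mathrm{e}}}}}(\cpx{M}[d],(\DD\cpx{M})[-d])\neq 0\}$. Throughout I would assume, without loss of generality, that $\mathcal{F}$ is a standard derived equivalence given by a two-sided tilting complex ${}_B\cpx{\Delta}_A$ with inverse ${}_A\cpx{\Theta}_B$, and write $\check{\mathcal{F}}=\cpx{\Delta}\otimesL_A-\otimesL_A\cpx{\Theta}\colon \D{\Modcat{A^{\mathrm{e}}}}\to\D{\Modcat{B^{\mathrm{e}}}}$ for the associated bimodule equivalence, whose properties are recorded in Lemma~\ref{lem:standard derived equivalence}. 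Fix anti-automorphisms of $A$ and of $B$ fixing all simples, both denoted $\omega$.

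For part (a), I would first note that finiteness of global dimension is preserved by derived equivalence ($\gldim A<\infty$ is equivalent to $\Db{A}$ coinciding with the perfect complexes, an intrinsic condition), so both algebras have finite or both have infinite global dimension; in the infinite case there is nothing to prove. Assuming $g:=\gldim A<\infty$, set $\cpx{M}_A:=\RHom_A({}_A\DD(A),{}_AA)\in\D{\Modcat{A^{\mathrm{e}}}}$, whose $i$-th cohomology is the $A$-bimodule $\Ext^i_A(\DD(A),A)$. These vanish for $i>g$, and $\Ext^g_A(\DD(A),A)\neq 0$ by Lemma~\ref{lem:characterise global dimension}; moreover $\DD(A)$ and $A$ are twist-invariant $A$-bimodules by Lemma~\ref{lem:twist bimodule}(4), so Lemma~\ref{lem:twist bimodule}(7) (identifying the left-module and right-module versions of this $\Ext$ group) shows $\Ext^g_A(\DD(A),A)$ is twist-invariant. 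Then Lemma~\ref{lem:twist bimodule}(8) with $m=g$ gives $\gldim A=\max\{d\mid \Hom_{\D{\Modcat{A^{\mathrm{e}}}}}(\cpx{M}_A[d],(\DD\cpx{M}_A)[-d])\neq 0\}$, and symmetrically $\gldim B=\max\{d\mid \Hom_{\D{\Modcat{B^{\mathrm{e}}}}}(\cpx{M}_B[d],(\DD\cpx{M}_B)[-d])\neq 0\}$ with $\cpx{M}_B:=\RHom_B({}_B\DD(B),{}_BB)$. Finally, Lemma~\ref{lem:standard derived equivalence}(1),(3),(4) yield $\check{\mathcal{F}}(\cpx{M}_A)\cong\cpx{M}_B$ and $\check{\mathcal{F}}(\DD\cpx{M}_A)\cong\DD\cpx{M}_B$, so $\check{\mathcal{F}}$ matches up the two Hom-groups for every $d$, and the two maxima agree: $\gldim A=\gldim B$.

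For part (b), by Lemma~\ref{lem:twist bimodule}(3) the hypothesis $\domdim\geq 1$ gives $\Ndomdim(A)=\domdim(A)\geq 1$ and $\Ndomdim(B)=\domdim(B)\geq 1$, so the Derived Restriction Theorem applies. Choose basic idempotents $e\in A$, $f\in B$ with $\add({}_AAe)=\stp{A}$ and $\add({}_BBf)=\stp{B}$ (so that $eA$, $fB$ are the minimal faithful right modules). By Theorem~\ref{thm:restriction theorem}, $\cpx{\Delta}\otimesL_A-$ restricts to $\Kb{\add({}_AAe)}\simeq\Kb{\add({}_BBf)}$, so Lemma~\ref{lem:derived-equivalence-preserves-canonical-triangle} applies and gives $\check{\mathcal{F}}(\eta^A_e)\cong\eta^B_f$, hence $\check{\mathcal{F}}(V_A(e))\cong V_B(f)$. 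Next I would observe that, since $eA$ is faithful, the canonical map $A\to \End_{eAe}(eA)=H^0(\RHom_{eAe}(eA,eA))$ is injective, an isomorphism precisely when $\domdim A\geq 2$, while $H^i(\RHom_{eAe}(eA,eA))=\Ext^i_{eAe}(eA,eA)$ for $i\geq 1$; combining this with M\"uller's characterisation (Proposition~\ref{prop:Muller characterisation}) yields $\domdim A=1+\inf\{i\mid H^i(V_A(e))\neq 0\}$, the right-hand side being $\infty$ exactly when $V_A(e)\simeq 0$, i.e.\ when $\domdim A=\infty$. If $\domdim A=\infty$ then $V_A(e)\simeq 0$, hence $V_B(f)\cong\check{\mathcal{F}}(V_A(e))\simeq 0$ and $\domdim B=\infty$ (and conversely). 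Otherwise both $V_A(e)$ and $V_B(f)$ are non-zero bounded complexes; writing $d_A=\domdim A$, the bimodule $H^{d_A-1}(V_A(e))$ is non-zero and, using that $eAe$ inherits from $\omega$ an anti-automorphism fixing its simples (Lemma~\ref{lem:twist bimodule}(1)) together with Lemma~\ref{lem:twist bimodule}(4),(5),(7), is twist-invariant. Applying Lemma~\ref{lem:twist bimodule}(8) to $\DD V_A(e)$ with $m=1-d_A$ (its top cohomology, in degree $1-d_A$, is $\DD H^{d_A-1}(V_A(e))$, twist-invariant since $k$-duality commutes with twisting) and using $\DD\DD V_A(e)\cong V_A(e)$, I obtain $1-\domdim A=\max\{d\mid \Hom_{\D{\Modcat{A^{\mathrm{e}}}}}((\DD V_A(e))[d],V_A(e)[-d])\neq 0\}$, and likewise for $B$. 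Since $\check{\mathcal{F}}(V_A(e))\cong V_B(f)$ and $\check{\mathcal{F}}(\DD V_A(e))\cong\DD V_B(f)$ by Lemma~\ref{lem:standard derived equivalence}(4), the right-hand sides coincide, whence $\domdim A=\domdim B$.

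The step I expect to be delicate is the twist-invariance of the relevant cohomology bimodules: $\Ext^g_A(\DD(A),A)$ in part (a), and $H^{\domdim A-1}(V_A(e))$ in part (b) --- the latter equals $\Ext^{\domdim A-1}_{eAe}(eA,eA)$ when $\domdim A\geq 2$ but the cokernel $\End_{eAe}(eA)/A$ when $\domdim A=1$, so this edge case needs a short separate argument --- all of which reduce to identifying the left-module and right-module versions of these $\Ext$ groups via the anti-automorphism, which is exactly what Lemma~\ref{lem:twist bimodule}(4),(5),(7) is designed to deliver. Beyond that, the only things left to check are the bookkeeping for $\domdim\in\{1,\infty\}$ and the (straightforward) verification that the chosen $e,f$ are legitimate inputs to Lemma~\ref{lem:derived-equivalence-preserves-canonical-triangle}; everything else is a direct consequence of the quoted lemmas.
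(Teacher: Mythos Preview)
Your approach is essentially the same as the paper's: for (a) you both characterise $\gldim$ via Lemma~\ref{lem:twist bimodule}(8) applied to $\RHom_A(\DD(A),A)$ and transport this under $\check{\mathcal{F}}$; for (b) you both use Theorem~\ref{thm:restriction theorem} and Lemma~\ref{lem:derived-equivalence-preserves-canonical-triangle} to identify $\check{\mathcal{F}}(V_A(e))\cong V_B(f)$, then read off $\domdim$ from the bottom cohomology of $V_A(e)$ via Lemma~\ref{lem:twist bimodule}(8) applied to $\DD V_A(e)$.

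The one place where your sketch diverges from the paper's execution is the twist-invariance of $H^i(V_A(e))$. The paper does \emph{not} argue via an inherited anti-automorphism on $eAe$ and an $eAe$-analogue of Lemma~\ref{lem:twist bimodule}(7); note that (7) as stated concerns $\Ext$ over $A$, so your citation of it for $\Ext_{eAe}^i(eA,eA)$ does not apply directly, and the putative anti-automorphism on $eAe$ (which requires conjugating by $u$ with $\omega(e)=ueu^{-1}$) would still need to be shown to fix $eAe$-simples and to interact correctly with the hybrid $(eAe,A)$-bimodule $eA$. Instead the paper packages this step into a separate Lemma~\ref{lem:canonical-complex-VA}, proving ${}^\omega H^i(V_A(e))^\omega\cong H^i(V_A(e))$ for \emph{all} $i$ by a direct chain of $A$-bimodule isomorphisms (using Lemma~\ref{lem:twist bimodule}(5) at the end) for $i>0$, and by an explicit commutative diagram involving four concrete bimodule isomorphisms for the $\Coker\eta$ and $\Ker\eta$ cases $i=0,-1$. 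This uniformly handles the $\domdim A=1$ edge case you flagged, without needing a separate argument.
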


For the proof of the theorem, we need the following lemma.

\begin{Lem}
Let $A$ be a $k$-split algebra with an anti-automorphism $\omega$ fixing all simple $A$-modules, and let $e$ be an idempotent such that $Ae$ is  basic and $\add(Ae)=\stp{A}$.  Then,  in the canonical triangle
$$\mathbf{(CT2)} \, \, \,  \eta_e^A :\quad  A \lraf{\rho_e}
   \RHom_{eAe}(eA,eA) \lra V_A(e) \lra A[1], $$
for each integer $i$,  we have ${}^{\omega}H^i(V_A(e))^{\omega}\cong H^i(V_A(e))$ as $A$-bimodules.
\label{lem:canonical-complex-VA}
\end{Lem}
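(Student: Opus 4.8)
The plan is to apply the exact twisting functor $T:={}^{\omega}(-)^{\omega}\colon\modcat{A^{\mathrm e}}\to\modcat{A^{\mathrm e}}$ (twisting both the left and the right $A$-action by $\omega$) to the canonical triangle $\eta_e^A$ and to identify the outcome with $\eta_e^A$ again. Since $T$ is an exact $k$-linear self-equivalence, it induces a triangulated self-equivalence of $\D{A^{\mathrm e}}$ commuting with the cohomology functors, so $H^i(T(\cpx{M}))\cong{}^{\omega}H^i(\cpx{M})^{\omega}$ for every complex $\cpx{M}$; hence it suffices to prove $T(V_A(e))\cong V_A(e)$ in $\D{A^{\mathrm e}}$. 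Applying $T$ to $\eta_e^A$ gives a triangle $T(A)\to T(\RHom_{eAe}(eA,eA))\to T(V_A(e))\to T(A)[1]$, and $T(A)\cong A$ by Lemma \ref{lem:twist bimodule}(4).

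The heart of the argument is the isomorphism $T(\RHom_{eAe}(eA,eA))\cong\RHom_{eAe}(eA,eA)$ in $\D{A^{\mathrm e}}$, compatible with the structure morphism $\rho_e$. First I would normalise $\omega$: by Lemma \ref{lem:twist bimodule}(1) the idempotent $\omega(e)$ is conjugate to $e$, so replacing $\omega$ by $\mathrm{ad}_u\circ\omega$ for a suitable unit $u$ — still an anti-automorphism fixing all simple modules, and leaving every twisted bimodule unchanged up to isomorphism — I may assume $\omega(e)=e$, whence $\omega$ restricts to an anti-automorphism $\omega_e$ of $eAe$ fixing all simple $eAe$-modules. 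Then I would combine two ingredients. On the one hand, since $Ae$ is strongly projective-injective we have $\DD(eA)\cong\nu_{_A}(Ae)\cong Ae$ as left $A$-modules, hence $Ae\cong\DD(eA)_{\tau}$ as $(A,eAe)$-bimodules for some automorphism $\tau$ of $eAe$ (cf.\ the proof of Lemma \ref{lem:twist bimodule}(5)); this gives $\RHom_{eAe}(eA,eA)\cong\RHom_{eAe}(Ae,Ae)$ in $\D{A^{\mathrm e}}$ (with $Ae$ taken as a right $eAe$-module, using $eAe$ self-injective only to make sense of things — the isomorphism is formal from $Ae\cong\DD(eA)_\tau$). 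On the other hand, the assignment $ex\mapsto\omega^{-1}(ex)$ defines an isomorphism ${}^{\omega_e}\!\bigl((eA)^{\omega}\bigr)\cong Ae$ of $(A,eAe)$-bimodules; since the $A$-bimodule structure on $\RHom_{eAe}(eA,eA)$ involves only the two right $A$-actions, which $\omega_e$ leaves untouched, propagating this through a projective resolution over $eAe$ yields $T(\RHom_{eAe}(eA,eA))\cong\RHom_{eAe}(Ae,Ae)$ in $\D{A^{\mathrm e}}$. Composing the two isomorphisms proves the claim. For the compatibility with $\rho_e$ I would use the description $\eta_e^A\cong\RHom_A(\xi_e^A,A)$ from the proof of Lemma \ref{lem:derived-equivalence-preserves-canonical-triangle}: under it $\rho_e$ becomes $\RHom_A(\pi_e,A)$ with $\pi_e$ the multiplication $Ae\otimesL_{eAe}eA\to A$, and multiplication maps are visibly preserved by the twistings above. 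Granting this, after the identifications the triangles $\eta_e^A$ and $T(\eta_e^A)$ have the same first morphism $A\xrightarrow{\rho_e}\RHom_{eAe}(eA,eA)$, so their third terms agree: $T(V_A(e))\cong V_A(e)$, as desired.

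The step I expect to be the main obstacle is precisely the last one — checking that the isomorphism $T(\RHom_{eAe}(eA,eA))\cong\RHom_{eAe}(eA,eA)$ can be chosen compatibly with $\rho_e$, equivalently that $T$ carries the sub-bimodule $\rho_e(A)\subseteq\End_{eAe}(eA)$ onto itself. One cannot replace this by a $\Hom$-vanishing argument in the style of Lemma \ref{lemma-isomorphic-triangles}: that lemma applies cleanly only when $\domdim(A)\geq 2$ (so that $\rho_e$ is a quasi-isomorphism and $H^{0}(V_A(e))=H^{-1}(V_A(e))=0$), whereas an algebra satisfying our hypotheses may have dominant dimension exactly $1$, in which case $H^{0}(V_A(e))=\Coker(\rho_e^{0})$ is genuinely non-zero. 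In the remaining degrees there is no trouble: $H^i(V_A(e))=0$ for $i<-1$; one checks $H^{-1}(V_A(e))=\Ker(\rho_e^{0})=\mathrm{r.ann}_A(AeA)$, which is $\omega$-stable because $\omega(AeA)=AeA$ and hence ${}^{\omega}(-)^{\omega}$-invariant; and for $i\geq 1$ one has $H^i(V_A(e))\cong\Ext^i_{eAe}(eA,eA)$, whose $T$-invariance is the module-level shadow of the isomorphism above together with Lemma \ref{lem:twist bimodule}(5).
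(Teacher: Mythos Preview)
Your strategy---apply the twisting functor $T={}^{\omega}(-)^{\omega}$ to $\eta_e^A$ and identify the result with $\eta_e^A$---is the paper's approach, only phrased at the triangle level rather than degree by degree. In both versions the entire content sits in the compatibility of the isomorphism $T(\Hom_{eAe}(eA,eA))\cong\Hom_{eAe}(eA,eA)$ with the canonical map $\eta\colon A\to\Hom_{eAe}(eA,eA)$ (your $\rho_e^0$), and you are right to flag this as the obstacle.

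Your sketch ``multiplication maps are visibly preserved by the twistings above'' is not yet a proof, and working at the derived level does not avoid the check. The paper fills exactly this gap by writing down four explicit $A$-bimodule isomorphisms $c_\omega, c_u, \DD, c_\tau$ (corresponding precisely to your normalisation of $\omega$, the passage via $Ae\cong\DD(eA)_\tau$, and ${}^{\omega_e}((eA)^\omega)\cong Ae$) whose composite $\Hom_{eAe}(eA,eA)\to{}^{\omega}\Hom_{eAe}(eA,eA)^{\omega}$ fits into a commutative square with $\eta$ on both horizontals and $\omega\colon A\to{}^\omega\!\!A^\omega$ on the other vertical. That single commutative diagram yields the bimodule isomorphisms on $\Ker\eta=H^{-1}(V_A(e))$ and $\Coker\eta=H^{0}(V_A(e))$ simultaneously.

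There is also a gap in your fallback argument for $i=-1$. From $\omega(AeA)=AeA$ you only get that $\omega$ \emph{exchanges} $\mathrm{r.ann}_A(AeA)$ and $\mathrm{l.ann}_A(AeA)$, so $T(\Ker\eta)\cong\mathrm{l.ann}_A(AeA)$ as a sub-bimodule of $A$; it does not follow that this equals, or is bimodule-isomorphic to, $\mathrm{r.ann}_A(AeA)=\Ker\eta$. The paper's commutative square forces $\omega(\Ker\eta)\subseteq\Ker\eta$, hence equality, so the $i=-1$ and $i=0$ cases are not really separable: both rest on the same explicit verification you still owe. Your treatment of $i<-1$ and $i\geq 1$ is fine and matches the paper.
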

\begin{proof}
First of all, it follows from Lemma \ref{lem:twist bimodule}(1) that $A\omega(e)$ and $Ae$ are isomorphic left $A$-modules, and consequently $\omega(e)$ and $e$ are conjugate in $A$, namely, there is an invertible element $u\in A$ such that $\omega(e)=ueu^{-1}$. We fix this notation throughout the proof.

Let $\eta: A\lra \Hom_{eAe}(eA, eA)$ be the canonical map sending each element  $a\in A$ to the map $r_a: ex\mapsto exa$. From the long exact sequence of cohomologies induced by the canonical triangle {\bf CT2}, it follows that 
$$H^i(V_A(e))\cong\begin{cases}
0 & {\mbox{when} \,}  i<-1; \\
 \Ker \eta& {\mbox{when} \,}  i=-1 ; \\
 \Coker\eta & {\mbox{when} \,} i=0; \\
 \Ext_{eAe}^i(eA, eA) & {\mbox{when} \,} i>0.
 \end{cases}$$
For each $i\geq 0$, we have the following isomorphisms of $A$-bimodules
 \begin{align*}
    \Ext^i_{eAe}(eA,eA)&\cong \Ext^i_{eAe}(e{}^\omega\!\!A^\omega, e{}^\omega\!\!A^\omega)
    \cong \Ext^i_{\omega(e)A\omega(e)} ({}^\omega\!\!A\omega(e), {}^\omega\!\!A\omega(e))\\
    &\cong {}^\omega\!\Ext^i_{ueAeu^{-1}}(Aeu^{-1},Aeu^{-1})^\omega
    \cong {}^\omega\! \Ext^i_{eAe}(Ae,Ae)^\omega ,\\
    &\cong {}^\omega\! \Ext^i_{eAe}(eA,eA)^\omega ,\\
 \end{align*} 

\vspace*{-0.5cm}
 
 \noindent 
 where the last isomorphism follows from Lemma \ref{lem:twist bimodule} (5).  Thus, for each integer $i>0$, there is an isomorphism ${}^{\omega}H^i(V_A(e))^{\omega}\cong H^i(V_A(e))$ of $A$-bimodules.
 
It remains to consider the cohomologies of $V_A(e)$ in degrees $0$ and $-1$. For this purpose, we need four $A$-bimodules isomorphisms.  We write down these isomorphisms explicitly, and it is straightforward to check that they are really $A$-bimodule homomorphisms. The first is
$$c_{\omega}: \Hom_{eAe}(eA, eA)\lra {}^{\omega}\Hom_{\omega(e)A\omega(e)}(A\omega(e), A\omega(e))^{\omega}, \, f\mapsto \omega^{-1}f\omega.$$
Note that $\omega(e)=ueu^{-1}$ and $A\omega(e)=Aeu^{-1}$. Let $r_u: Aeu^{-1}\lra Ae$ be the map sending $xeu^{-1}$ to $xe$.  The second isomorphism reads as follows. 
$$c_u: \Hom_{\omega(e)A\omega(e)}(A\omega(e), A\omega(e))\lra \Hom_{eAe}(Ae, Ae), \, g\mapsto r_u^{-1}gr_u.$$
By assumption $Ae$ is basic and $\add(Ae)=\stp{A}$. Then $\nu_AAe$  is again in $\add(Ae)$, basic and has the same number of indecomposable direct summands as $Ae$. Hence $\nu_AAe\cong Ae$ as left $A$-modules.  It follows that $\DD(eA)=\nu_AAe\cong Ae$ as left $A$-modules. Equivalently, $\DD(Ae)\cong eA$ as right $A$-modules, and there is some automorphism $\sigma$ of $eAe$ such that $\DD(Ae)\cong {}_{\sigma}eA$ as $eAe$-$A$-bimodules.  Let $\tau: \DD(Ae)\ra eA$ be such an isomorphism. The third  isomorphism is the canonical map
$$\DD: \Hom_{eAe}(Ae, Ae)\lra\Hom_{eAe}(\DD(Ae), \DD(Ae)),\, h\mapsto \DD(h),$$
and the fourth isomorphism is a composition
$$c_{\tau}: \Hom_{eAe}(\DD(Ae), \DD(Ae))\lra\Hom_{eAe}({}_{\sigma}eA, {}_{\sigma}eA)\lra\Hom_{eAe}(eA, eA),\, h\mapsto  \tau^{-1}h\tau.$$
From the proof of Lemma \ref{lem:twist bimodule} (4), the map $\omega: A\ra {}^{\omega}\!\!A^{\omega}$ is an $A$-bimodule isomorphism. Then it is straightforward to check that the diagram
$$\xymatrix@C=20mm@R=5mm{
A\ar[r]^(.4){\eta}\ar[dddd]^{\omega} & \Hom_{eAe}(eA, eA)\ar[d]^{c_{\omega}}\\
& {}^{\omega}\Hom_{\omega(e)A\omega(e)}(A\omega(e), A\omega(e))^{\omega}\ar[d]^{c_u}\\
& {}^{\omega}\Hom_{eAe}(Ae, Ae)^{\omega}\ar[d]^{\DD}\\
& {}^{\omega}\Hom_{eAe}(\DD(Ae), \DD(Ae))^{\omega}\ar[d]^{c_{\tau}}\\
{}^{\omega}\!\!A^{\omega}\ar[r]^(.4){\eta} & {}^{\omega}\Hom_{eAe}(eA, eA)^{\omega}
} \quad\quad (\star)$$
is commutative. Actually, for each $a\in A$, the image of $a$ under $\eta c_{\omega} c_u \DD$ is $\DD(r_{u}^{-1}\omega^{-1}r_a\omega r_u)$, which sends each $\alpha\in \DD(Ae)$ to $r_{u}^{-1}\omega^{-1}r_a\omega r_u\alpha$. The image of $a$ under $\omega\eta$ is $r_{\omega(a)}$. Then one can check that there is a commutative diagram
$$\xymatrix@C=25mm{
eA\ar[d]^{\tau^{-1}}\ar[r]^{r_{\omega(a)}} & eA\ar[d]^{\tau^{-1}}\\
\DD(Ae)\ar[r]^{\DD(r_{u}^{-1}\omega^{-1}r_a\omega r_u)} & \DD(Ae)
}$$
of right $A$-modules. This means precisely that the image of $a$ under $\eta c_{\omega} c_u \DD c_{\tau}$ is the same as its image under $\omega\eta$. Thus, the diagram $(\star)$ is commutative, and induces another commutative diagram
$$\xymatrix{
0\ar[r] &\Ker\eta\ar[r]\ar@{-->}[d]^{\cong} & A\ar[d]^{\omega}\ar[r]^(.3){\eta} &\Hom_{eAe}(eA, eA)\ar[d]^{c_{\omega}c_u\DD c_{\tau}}\ar[r] & \Coker\eta\ar[r]\ar@{-->}[d]^{\cong} & 0\\
0\ar[r] &{}^{\omega}(\Ker\eta)^{\omega}\ar[r] & {}^{\omega}\!\!A^{\omega}\ar[r]^(.3){\eta} &{}^{\omega}\Hom_{eAe}(eA, eA)^{\omega}\ar[r] & {}^{\omega}(\Coker\eta)^{\omega}\ar[r] & 0
}$$
where the rows are exact and vertical maps are $A$-bimodule isomorphisms. This finishes the proof.
\end{proof}

 \begin{proof}[of Theorem \ref{thm:class with anti-automorphisms}] (a)
 Proposition \ref{prop:bound global dimension} implies that if $A$ or $B$ has
infinite global dimension, then so does the other and we are done.
 Now assume $A$ and $B$ to have finite global dimension
 and $\Db{A}\cong \Db{B}$ to be a standard derived equivalence.
 Let $g$ and $\bar{g}$ be the global dimensions of $A$ and $B$ respectively,
 and let $\cpx{R_{A}}$ denote the bounded complex $\RHom_A({}_A\DD(A), {}_AA)$ in $\Db{A^{\mathrm e}}$,
 and $\cpx{R_B}$ the bounded complex $\RHom_B({}_B\DD(B), {}_BB)$ in
 $\Db{B^{\mathrm{e}}}$. Then Lemma \ref{lem:characterise global dimension}
 can be reformulated as
 \[
    g = \max\{i\mid \mathrm{H}^i(\cpx{R_A})\neq 0\},\qquad \bar{g} = \max\{i\mid \mathrm{H}^i(\cpx{R_B})\neq 0\}.
 \]
 By Lemma \ref{lem:twist bimodule} (4) and (7) and by $k$-duality,
 \[
    \Ext^g_A({_A}\DD(A),{_A}A)\cong {}^\omega\!\Ext^g_A(\DD(A)_A, A_A)^\omega
    \cong {}^\omega\!\Ext^g_A({_A}\DD(A), {_A}A)^\omega
 \]
 as $A$-bimodules. Note that $H^m(\cpx{R_A})\cong\Ext_A^g({}_A\DD(A), {}_AA)$.  Hence, by Lemma \ref{lem:twist bimodule}(8), we get  
 \[
   g = \max\{i \mid \Hom_{\Db{A^{\mathrm{e}}}}(\cpx{R_A}[i], (\DD\cpx{R_A})[-i])\neq 0\}.
 \]
 Similarly $\bar{g} = \max\{i\mid \Hom_{\Db{B^{\mathrm{e}}}}(\cpx{R_B}[i], (\DD\cpx{R_B})[-i])\neq 0\}$.
 Let $\mathcal{F}: \Db{A^{\mathrm{e}}}\cong \Db{B^{\mathrm{e}}}$ be the derived equivalence
 induced from the given standard derived equivalence between $A$ and $B$.
 Then $\mathcal{F}(\cpx{R_A})\cong \cpx{R_B}$ and
 $\mathcal{F}(\DD\cpx{R_A})\cong \DD\cpx{R_B}$ in $\Db{B^{\mathrm e}}$
 by Lemma \ref{lem:standard derived equivalence} (1), (3) and (4).
 Hence, $g=\bar{g}$, that is, $A$ and $B$ have the same global dimension.

 (b) Let $d$ and $\bar{d}$ be
 the $\nu$-dominant dimensions of $A$ and $B$ respectively. Since both $d$ and
 $\bar{d}$ are at least one by assumption, Lemma \ref{lem:twist bimodule} (3)
 (or Lemma \ref{lem:two dominant dimensions coincide}) implies that
 there is no need to distinguish between dominant dimension
 and $\nu$-dominant dimension. Let $e$ and $f$ be basic idempotents in $A$ and $B$, respectively, such that $\add(Ae)=\stp{A}$ and $\add(Bf)=\stp{B}$.  
 Let $\eta_e^A: A\ra \RHom_{eAe}(eA,eA)\ra V_A(e) \ra A[1]$ and 
\mbox{$\eta_f^B: B\ra \RHom_{fBf}(fB,fB)\ra V_B(f) \ra B[1]$} be the canonical
 triangles (denoted by (CT2) in \ref{subsectionstandarddereq})
 associated to the idempotents $e$ and $f$ respectively.

Let $m:=\min\{i|H^i(V_A(e))\neq 0\}$. We claim that $d=m+1$. 
As $d\geq 1$, the module $eA$ is a faithful right $A$-module and $\eta$ is injective. In this case $H^{-1}(V_A(e))=0$, and  $H^0(V_A(e))\cong \Coker\eta$ vanishes if and only if $\eta$ is also surjective, if and only if $d\geq 2$ (see \cite{Mu68}). This implies that $m=0$ when $d=1$. If $d\geq 2$, then it follows from M\"{u}ller's characterisation (Proposition \ref{prop:Muller characterisation}) that $d=m+1$.  

 Without loss of generality, the derived equivalence
 $\D{A}\cong \D{B}$ can be assumed to be standard and to be given
 by a two-sided tilting complex $\cpx{\Delta}$
 in $\D{B\otimes A\opp}$. Let $\mathcal{F}: \D{A^{\mathrm{e}}}\to \D{B^{\mathrm{e}}}$ be the induced standard
 derived equivalence and $\cpx{\Theta} = \RHom_{\D{B}}(\cpx{\Delta}, B)$.
 Then by Theorem \ref{thm:restriction theorem} and Lemma \ref{lem:derived-equivalence-preserves-canonical-triangle},
 \[
    \cpx{\Delta}\otimesL_A \eta_e^A\otimesL_A \cpx{\Theta}\cong \eta_f^B
 \]
 as triangles in $\D{B^{\mathrm{e}}}$, and in particular, $\mathcal{F}(V_A(e))\cong V_B(f)$
 in $\D{B^{\mathrm{e}}}$.
 
 Thus, it follows from Lemma \ref{lem:canonical-complex-VA} and Lemma \ref{lem:twist bimodule}(8) that 
 $$1-d=\max\{i|\Hom_{\D{A^{\rm e}}}((\DD V_A(e))[i], V_A(e)[-i])\neq 0\}, \mbox{  and }$$
 $$1-\bar{d}=\max\{i|\Hom_{\D{B^{\rm e}}}((\DD V_B(f))[i], V_B(f)[-i])\neq 0\}.$$
Since ${\cal F}(V_A(e))\cong V_B(f)$ and ${\cal F}(\DD V_A(e))\cong\DD V_B(f)$, one has $1-d=1-\bar{d}$, and $d=\bar{d}$. 
 \end{proof}



 The characterisations of global and dominant dimension, respectively,
inside the derived category of the enveloping algebras,
 provided by the proof may be of independent interest.

 \begin{Koro}
   Let $A$ be an algebra with an anti-automorphism preserving simples. 
   
   $\mathrm{(a)}$ Let $\cpx{R_{A}}$ denote the bounded
   complex $\RHom_A({}_A\DD(A), {}_AA)$ in $\Db{A^{\mathrm e}}$. Then
   the global dimension of $A$, if finite, is determined in the category
   $\Db{A^{\mathrm e}}$ as $$\gldim(A) = \max\{i \mid
   \Hom_{\Db{A^{\mathrm{e}}}}(\cpx{R_A}[i], (\DD\cpx{R_A})[-i])\neq 0\}. $$
   
 $\mathrm{(b)}$ Suppose $e$ is an idempotent in $A$ such that $Ae$ is basic and $\add(Ae)=\stp{A}$. Then
   the dominant dimension of $A$ is determined
in the category $\Db{A^{\mathrm e}}$ by the canonical triangle
   $$\mathbf{(CT2)} \, \, \,  \eta_e^A :\quad  A \lraf{\rho_e}
   \RHom_{eAe}(eA,eA) \lra V_A(e) \lra A[1], \mathrm{ as}$$
   $$\domdim(A) = 1-\max\{i \mid
   \Hom_{\D{A^{\rm e}}}(\DD (V_A(e))[i], V_A(e)[-i])
   \neq 0\}.$$
 \end{Koro}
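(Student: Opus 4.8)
The plan is to observe that both identities are precisely the intermediate statements produced in the course of proving Theorem~\ref{thm:class with anti-automorphisms}, so what remains is only to record them as self-contained single-algebra assertions and to check that the degree bookkeeping fits the format of Lemma~\ref{lem:twist bimodule}(8). Throughout, let $\omega$ denote an anti-automorphism of $A$ fixing all simple modules.

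\emph{Part (a).} Assume $g:=\gldim(A)<\infty$. First I would apply Lemma~\ref{lem:characterise global dimension} to identify $g$ with the largest degree in which $\cpx{R_A}=\RHom_A({}_A\DD(A),{}_AA)$ has non-zero cohomology; in particular $\mathrm{H}^i(\cpx{R_A})=0$ for $i>g$, and $\mathrm{H}^g(\cpx{R_A})\cong\Ext^g_A({}_A\DD(A),{}_AA)\neq 0$. Next, combining Lemma~\ref{lem:twist bimodule}(4) and~(7) with $k$-duality (exactly as in the proof of Theorem~\ref{thm:class with anti-automorphisms}(a)) I would deduce ${}^{\omega}\mathrm{H}^g(\cpx{R_A})^{\omega}\cong\mathrm{H}^g(\cpx{R_A})$ as $A$-bimodules. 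Both hypotheses of Lemma~\ref{lem:twist bimodule}(8) then hold for $\cpx{M}=\cpx{R_A}$ with the integer $m=g$, and the lemma gives at once $g=\max\{i\mid\Hom_{\Db{A^{\mathrm{e}}}}(\cpx{R_A}[i],(\DD\cpx{R_A})[-i])\neq 0\}$, which is the assertion.

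\emph{Part (b).} Here I would first recall from the proof of Theorem~\ref{thm:class with anti-automorphisms}(b) that, writing $m:=\min\{i\mid\mathrm{H}^i(V_A(e))\neq 0\}$, the long exact cohomology sequence of the canonical triangle $\eta_e^A$ together with M\"uller's criterion (Proposition~\ref{prop:Muller characterisation}) yields $\domdim(A)=m+1$ (the value $m=0$ occurring exactly when $\domdim(A)=1$, and $m\geq 1$ reflecting the vanishing of $\Ext^i_{eAe}(eA,eA)$ in low degrees). By Lemma~\ref{lem:canonical-complex-VA}, each cohomology bimodule $\mathrm{H}^i(V_A(e))$ satisfies ${}^{\omega}\mathrm{H}^i(V_A(e))^{\omega}\cong\mathrm{H}^i(V_A(e))$; passing to $k$-duals, the complex $\DD(V_A(e))$ has cohomology concentrated in degrees $\leq -m$, with $\mathrm{H}^{-m}(\DD(V_A(e)))\cong\DD(\mathrm{H}^m(V_A(e)))\neq 0$ and still $\omega$-twist-invariant. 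Applying Lemma~\ref{lem:twist bimodule}(8) to $\cpx{M}=\DD(V_A(e))$ with the integer $-m$, and using $\DD\DD(V_A(e))\cong V_A(e)$, I obtain $-m=\max\{i\mid\Hom_{\D{A^{\mathrm{e}}}}(\DD(V_A(e))[i],V_A(e)[-i])\neq 0\}$; rearranging gives $\domdim(A)=m+1=1-\max\{i\mid\Hom_{\D{A^{\mathrm{e}}}}(\DD(V_A(e))[i],V_A(e)[-i])\neq 0\}$.

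I do not expect a genuine obstacle, since the mathematical substance is already carried by Theorem~\ref{thm:class with anti-automorphisms} and its supporting lemmas; the only points requiring care are the bookkeeping of cohomological degrees — notably the passage from $V_A(e)$, which is bounded below, to $\DD(V_A(e))$, which is bounded above, so that the ``no cohomology above degree $m$'' hypothesis of Lemma~\ref{lem:twist bimodule}(8) is met — and the routine check that $k$-duality commutes with the $\omega$-twist on $A$-bimodules, which is the natural isomorphism already used inside the proofs of Lemma~\ref{lem:twist bimodule}(4) and~(6). One should also note that, as in Theorem~\ref{thm:class with anti-automorphisms}(b), part~(b) is meant in the situation $\domdim(A)=\Ndomdim(A)\geq 1$, which is where the formula $\domdim(A)=m+1$ is valid.
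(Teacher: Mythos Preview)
Your argument for part~(a) is correct and matches the paper's proof exactly: both simply extract the relevant computation from the proof of Theorem~\ref{thm:class with anti-automorphisms}(a).

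For part~(b), however, there is a genuine gap. You explicitly restrict to the situation $\domdim(A)\geq 1$, writing that ``part~(b) is meant in the situation $\domdim(A)=\Ndomdim(A)\geq 1$''. But the corollary as stated carries no such hypothesis, and the paper intends the formula to hold for \emph{all} values of $\domdim(A)$, including zero. Indeed, the paper's own proof singles out the case $\domdim(A)=0$ and treats it separately: if $\domdim(A)=0$ then $eA$ is not a faithful right $A$-module, so the canonical map $\eta: A\to \Hom_{eAe}(eA,eA)$ fails to be injective, whence $\mathrm{H}^{-1}(V_A(e))=\Ker\eta\neq 0$. Thus $m=\min\{i\mid \mathrm{H}^i(V_A(e))\neq 0\}=-1$, and the identity $\domdim(A)=m+1$ continues to hold. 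Lemma~\ref{lem:canonical-complex-VA} still supplies the $\omega$-twist invariance of $\mathrm{H}^{-1}(V_A(e))$, and Lemma~\ref{lem:twist bimodule}(8) applied to $\DD(V_A(e))$ at degree $-m=1$ then yields the stated formula exactly as in your argument for the $\domdim(A)\geq 1$ case.

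In short, your proof is incomplete only because you misread the scope of the statement; the missing case requires no new idea, merely the observation that $\Ker\eta\neq 0$ forces $m=-1$, after which your own argument (via Lemma~\ref{lem:canonical-complex-VA} and Lemma~\ref{lem:twist bimodule}(8)) goes through unchanged.
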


\begin{proof}
The proof of Theorem \ref{thm:class with anti-automorphisms} implies that (a) holds, and that (b) holds in case that $\domdim(A)\geq 1$.  Suppose that $\domdim(A)=0$. Then $eA$ cannot be a faithful $A$-module, and thus the canonical map $\eta: A\ra \Hom_{eAe}(eA, eA)$ is not injective. It follows that $H^{-1}(V_A(e))$, which is $\Ker\eta$, is nonzero.  Then, in this case,  (b) follows from Lemma \ref{lem:twist bimodule}(8).
\end{proof}

 
 In the current context, the
 complexes $\cpx{R_{A}}$
used to identify global dimension are preserved at least by
 standard derived equivalences. 

The derived restriction theorem (Corollary 
\ref{cor:restriction theorem for Morita algebras})
and Theorem \ref{thm:restriction theorem} upon which it is based, are
a crucial ingredient of the above proof of Theorem 
\ref{thm:class with anti-automorphisms}. Together with Lemma \ref{lem:derived-equivalence-preserves-canonical-triangle}, they guarantee that the canonical triangle {\bf (CT2)} is preserved under standard derived equivalences between algebras with positive $\nu$-dominant dimensions. 



\begin{Koro}
  Let $A$ and $B$ be algebras with anti-automorphisms preserving simples.
  Then a standard derived equivalence from $A$ to $B$ induces a standard
equivalence from  $\Db{A^{\mathrm e}}$ to  $\Db{B^{\mathrm e}}$ that
sends $\cpx{R_{A}}$ to $\cpx{R_{B}}$.
 \end{Koro}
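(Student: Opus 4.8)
The plan is to deduce this directly from the multiplicativity properties of standard equivalences on enveloping algebras recorded in Lemma \ref{lem:standard derived equivalence}, so that the anti-automorphism hypothesis in fact plays no role in the argument itself; it is retained only because, by the preceding corollary, $\cpx{R_A}$ and $\cpx{R_B}$ are the complexes that compute $\gldim(A)$ and $\gldim(B)$, and the content of the statement is that this identification is transported along standard derived equivalences.

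First I would fix a two-sided tilting complex ${}_B\cpx{\Delta}_A$ in $\Db{\Modcat{(B\otimes_k A\opp)}}$ representing the given standard derived equivalence, together with an inverse ${}_A\cpx{\Theta}_B$, and form the functor $\check{\mathcal{F}} := \cpx{\Delta}\otimesL_A - \otimesL_A \cpx{\Theta} : \D{\Modcat{A^{\mathrm e}}} \to \D{\Modcat{B^{\mathrm e}}}$ of (\ref{eqn:two sided standard derived equivalence}). This is precisely the standard derived equivalence between the enveloping algebras induced by the chosen one; since ${}_A\cpx{\Theta}$ and $\cpx{\Delta}_A$ (equivalently ${}_B\cpx{\Delta}$ and $\cpx{\Theta}_B$) are perfect as one-sided complexes, $\check{\mathcal{F}}$ restricts to an equivalence $\Db{A^{\mathrm e}} \xrightarrow{\sim} \Db{B^{\mathrm e}}$, so it suffices to evaluate it on $\cpx{R_A}$.

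Then I would invoke Lemma \ref{lem:standard derived equivalence}: by part (3), applied with $\cpx{X} = \DD(A)$ and $\cpx{Y} = A$ (both finite-dimensional $A$-bimodules, hence objects of $\Db{\Modcat{A^{\mathrm e}}}$), one gets $\check{\mathcal{F}}(\cpx{R_A}) = \check{\mathcal{F}}\big(\RHom_A({}_A\DD(A),{}_AA)\big) \cong \RHom_B\big({}_B\check{\mathcal{F}}(\DD(A)),{}_B\check{\mathcal{F}}(A)\big)$ in $\Db{B^{\mathrm e}}$, and by part (1), $\check{\mathcal{F}}(A) \cong B$ and $\check{\mathcal{F}}(\DD(A)) \cong \DD(B)$ as $B$-bimodules, so the right-hand side is $\RHom_B({}_B\DD(B),{}_BB) = \cpx{R_B}$. (Part (4) would give the same conclusion, but it is not needed.) There is no substantial obstacle: the argument is a formal consequence of Lemma \ref{lem:standard derived equivalence}; the only point deserving a line of care is the bookkeeping that the functor $\check{\mathcal{F}}$ in (\ref{eqn:two sided standard derived equivalence}) genuinely is the standard equivalence on $\Db{-^{\mathrm e}}$ associated to the given standard equivalence on module categories, and that all isomorphisms above take place inside the bounded derived category of bimodules.
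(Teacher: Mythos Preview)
Your proposal is correct and is essentially the paper's own argument: the paper gives no separate proof of this corollary, but in the proof of Theorem \ref{thm:class with anti-automorphisms}(a) it records exactly the computation you carry out, namely $\check{\mathcal{F}}(\cpx{R_A})\cong \cpx{R_B}$ via Lemma \ref{lem:standard derived equivalence} (1) and (3). Your observation that the anti-automorphism hypothesis is not used in this step is also accurate.
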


When an algebra $A$ with an anti-automorphism preserving simples is derived
equivalent to an algebra $B$, possibly without such an anti-automorphism,
the proof of Theorem \ref{thm:class with anti-automorphisms} provides an
inequality:

\begin{Koro} \label{cor:inequality}
 Let $A$ be an algebra with an anti-automorphism preserving simples and
suppose $A$ is derived equivalent to an algebra $B$. Then there is an
inequality \mbox{$\gldim(B) \geq \gldim(A)$}.
\end{Koro}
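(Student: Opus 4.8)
The plan is to revisit the proof of Theorem~\ref{thm:class with anti-automorphisms}(a) and extract from it exactly the part that does not use the anti-automorphism on $B$. Recall that in that proof, writing $g=\gldim(A)$, one uses Lemma~\ref{lem:characterise global dimension} to rewrite $g=\max\{i\mid \mathrm{H}^i(\cpx{R_A})\neq 0\}$ where $\cpx{R_A}=\RHom_A({}_A\DD(A),{}_AA)$, provided $g<\infty$. The anti-automorphism $\omega$ on $A$ and Lemma~\ref{lem:twist bimodule}(4),(7),(8) are then invoked to upgrade this to the two-variable characterisation $g=\max\{i\mid \Hom_{\Db{A^{\mathrm e}}}(\cpx{R_A}[i],(\DD\cpx{R_A})[-i])\neq 0\}$, which is manifestly preserved under the standard derived equivalence $\mathcal{F}:\Db{A^{\mathrm e}}\to\Db{B^{\mathrm e}}$ by Lemma~\ref{lem:standard derived equivalence}(1),(3),(4). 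The point is that this last Hom characterisation, and its preservation under $\mathcal{F}$, require the anti-automorphism only on the side of $A$.

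First I would dispose of the case $\gldim(A)=\infty$: then Proposition~\ref{prop:bound global dimension} forces $\gldim(B)=\infty$ as well, so the inequality is trivial. So assume $g:=\gldim(A)<\infty$. Without loss of generality take the derived equivalence to be standard, inducing $\mathcal{F}:\Db{A^{\mathrm e}}\xrightarrow{\sim}\Db{B^{\mathrm e}}$ with $\mathcal{F}(\cpx{R_A})\cong\cpx{R_B}$ and $\mathcal{F}(\DD\cpx{R_A})\cong\DD\cpx{R_B}$, exactly as in the proof of Theorem~\ref{thm:class with anti-automorphisms}. By that same proof (using $\omega$ on $A$ only),
\[
 g=\max\{i\mid \Hom_{\Db{A^{\mathrm e}}}(\cpx{R_A}[i],(\DD\cpx{R_A})[-i])\neq 0\},
\]
and applying $\mathcal{F}$ shows
\[
 g=\max\{i\mid \Hom_{\Db{B^{\mathrm e}}}(\cpx{R_B}[i],(\DD\cpx{R_B})[-i])\neq 0\}.
\]
On the other hand, $\cpx{R_B}[g]$ has nonzero cohomology in degree $-g$, namely $\Ext^g_B({}_B\DD(B),{}_BB)$, which by the displayed Hom-nonvanishing must be nonzero; hence $\Ext^g_B({}_B\DD(B),{}_BB)\neq 0$, which forces $\gldim(B)\geq g=\gldim(A)$. (Here one only needs the trivial direction: $\Hom_{\Db{B^{\mathrm e}}}(\cpx{R_B}[g],(\DD\cpx{R_B})[-g])\cong\Hom_{B^{\mathrm e}}(\mathrm{H}^{-g}(\cpx{R_B}),\DD\,\mathrm{H}^{-g}(\cpx{R_B}))$ by Lemma~\ref{lem:morphisms in derived categories}(3), and this is nonzero only if $\mathrm{H}^{-g}(\cpx{R_B})=\Ext^g_B(\DD(B),B)\neq 0$.)

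The only subtlety — and the step to be careful about — is that the equality $g=\max\{i\mid \mathrm{H}^i(\cpx{R_B})\neq 0\}$, which would give the reverse inequality, genuinely requires Lemma~\ref{lem:characterise global dimension} and hence finiteness of $\gldim(B)$, which we cannot assume; so we only get an inequality, not equality, consistent with the statement. All other ingredients are already in place: nothing new about $B$ is needed beyond being derived equivalent to $A$ via a standard equivalence, and every invocation of the anti-automorphism is on $A$. Thus the corollary follows directly by reading off the one-sided content of the proof of Theorem~\ref{thm:class with anti-automorphisms}(a).
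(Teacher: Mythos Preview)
Your approach is the same as the paper's: extract from the proof of Theorem~\ref{thm:class with anti-automorphisms}(a) the identification
\[
g=\max\{i\mid \Hom_{\Db{A^{\mathrm e}}}(\cpx{R_A}[i],(\DD\cpx{R_A})[-i])\neq 0\},
\]
which uses $\omega$ only on $A$, transport it to $B$ via the standard equivalence, and read off $\gldim(B)\geq g$. That is exactly what the paper does.

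However, your final paragraph has an indexing slip and an unjustified step. With the paper's shift convention $X[n]^i=X^{i+n}$, one has $H^0(\cpx{R_B}[g])=H^g(\cpx{R_B})=\Ext^g_B(\DD(B),B)$, not $H^{-g}(\cpx{R_B})$. More importantly, you cannot invoke Lemma~\ref{lem:morphisms in derived categories}(3) directly to compute $\Hom_{\Db{B^{\mathrm e}}}(\cpx{R_B}[g],(\DD\cpx{R_B})[-g])$, because without any assumption on $\gldim(B)$ you do not know that the cohomology of $\cpx{R_B}$ is bounded above. In particular, the non-vanishing of this Hom does \emph{not} directly force $\Ext^g_B(\DD(B),B)\neq 0$. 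The clean fix, which is what the paper does, is contrapositive: if $\gldim(B)<g$, then $H^i(\cpx{R_B})=0$ for $i\geq g$, so $\cpx{R_B}[g]$ has cohomology only in degrees $<0$ while $(\DD\cpx{R_B})[-g]$ has cohomology only in degrees $>0$; now Lemma~\ref{lem:morphisms in derived categories}(3) applies and gives $\Hom_{\Db{B^{\mathrm e}}}(\cpx{R_B}[g],(\DD\cpx{R_B})[-g])=0$, a contradiction. With this adjustment your argument is complete and coincides with the paper's.
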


\begin{proof}
The characterisation of $\gldim(A) = g$ given in the proof of Theorem
\ref{thm:class with anti-automorphisms} remains valid. Then
the non-vanishing over $A$ of
\mbox{$0 \neq \Hom_{\Db{A^{\mathrm{e}}}}(\cpx{R_A}[g], (\DD\cpx{R_A})[-g]) \cong$}
$\Hom_{\Db{B^{\mathrm{e}}}}(\cpx{R_B}[g], (\DD\cpx{R_B})[-g])\neq 0\}$,
implies a contradiction to $\gldim(B) < g$.
\end{proof}


The following example shows that the assumption in the Second Invariance
Theorem \ref{thm:class with anti-automorphisms}, requiring both
algebras admitting an anti-automorphism preserving simples, cannot be relaxed;
the inequality in Corollary \ref{cor:inequality} is in general not an equality.
In this example, the algebra $A$ has an anti-automorphism, while $B$ does not.
The algebras $A$ and $B$ are derived equivalent. The global dimension of $B$
is strictly bigger than that of $A$.

Let the algebra $A$ be given by the quiver
$$\xy<40pt, 0pt>:
(0, 0)*+{1}="1",
(1,0)*+{2}="2",
(2,0)*+{3}="3",
{\ar@/^.7pc/^{\alpha} "1";"2"},
{\ar@/^.7pc/^{\alpha^*} "2";"1"},
{\ar@/_.7pc/_{\beta} "3";"2"},
{\ar@/_.7pc/_{\beta^*} "2";"3"},
\endxy$$
with relations $\{\alpha^*\alpha, \beta^*\beta, \beta^* \alpha, \alpha^* \beta
\}$. This is a {\em dual extension} (defined by C.C.Xi \cite{Xi})
of the path algebra of $1\rightarrow 2\leftarrow 3$. It is a
quasi-hereditary cellular algebra, and has global dimension $2$. Let $P_i$ be
the indecomposable projective $A$-modules corresponding to the vertex $i$, and
let $T^{\bullet}$ be the direct sum of three indecomposable complexes
${P_1^{\bullet}}$, $P_2^{\bullet}$ and $P_3^{\bullet}$, where
$P_2^{\bullet}:=P_2[1]$ and, for each $i\in\{1, 3\}$, the complex
$P_i^{\bullet}$ is the complex $$0\rightarrow P_2\rightarrow P_i\rightarrow 0$$
with $P_2$ in degree $-1$. Then $T^{\bullet}$ is a tilting complex, inducing
a derived equivalence between $A$ and the
endomorphism algebra $B$ of $T^{\bullet}$. The algebra $B$ has the following
quiver
$$\xy<40pt, 0pt>:
(0, 0)*+{1}="1",
(1,0)*+{2}="2",
(2,0)*+{3}="3",
{\ar_{\pi_1} "2";"1"},
{\ar^{\pi_3} "2";"3"},
{\ar@/^1pc/^{l_3} "3";"2"},
{\ar@/_1pc/_{l_1} "1";"2"},
{\ar@/_1.5pc/_{r_3} "3";"2"},
{\ar@/^1.5pc/^{r_1} "1";"2"},
\endxy$$
with relations $\{l_1\pi_1-l_3\pi_3, \,  r_1\pi_1-r_3\pi_3, \, \pi_1l_i,
\pi_3r_i, i=1, 3\}$. The algebra $A$ has an anti-automorphism fixing simple
modules, while $B$ cannot have such an anti-automorphism. (If an algebra
$\Lambda$ has such an anti-automorphism, then
$\Ext_{\Lambda}^1(U, V)\cong\Ext_{\Lambda}^1(V, U)$ for all simple
$\Lambda$-modules $U, V$. This implies that, in the quiver of the algebra
$\Lambda$, for any two vertices $a$ and $b$, the number of arrows
from $a$ to $b$ coincides with the number of arrows in the opposite direction,
from $b$ to $a$). A
direct calculation shows that the algebra $B$ has global dimension $3$, which
is strictly larger than $\gldim(A)$.
\bigskip

Cellular algebras by definition have anti-automorphisms preserving simples;
hence this large and widely studied class of algebras is covered by the 
Second Invariance Theorem \ref{thm:class with anti-automorphisms}.

\begin{Koro} \label{cor:cellular}
Suppose that $A$ and $B$ are derived equivalent cellular algebras
over a field. Then $A$ and $B$ have the same global dimension. If both $A$ and 
$B$ have positive dominant dimension, then their dominant dimensions are equal. 
\end{Koro}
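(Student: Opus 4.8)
The plan is to obtain Corollary \ref{cor:cellular} as an immediate application of the Second Invariance Theorem \ref{thm:class with anti-automorphisms}; all that has to be checked is that a cellular algebra over $k$ meets the two hypotheses of that theorem, namely that it is a split $k$-algebra and that it admits an algebra anti-automorphism fixing all simple modules.

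By definition, a cellular algebra $A$ over $k$ (in the sense of Graham and Lehrer) comes equipped with a cell datum, part of which is a $k$-linear anti-automorphism $i\colon A\to A$ with $i^2=\mathrm{id}_A$; in particular $i$ is an algebra anti-automorphism of $A$. The first step is to record that $i$ fixes all simple $A$-modules in the sense required in Theorem \ref{thm:class with anti-automorphisms}. Indeed, the cellular theory equips each simple left $A$-module $L$ with a non-degenerate contravariant bilinear form $\langle-,-\rangle\colon L\times L\to k$, characterised by $\langle am,n\rangle=\langle m, i(a)n\rangle$ for all $a\in A$, $m,n\in L$ (the simple modules of $A$ being precisely the quotients of cell modules by the radicals of such forms). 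The map $L\to \DD({}^{i}L)$, $m\mapsto \langle m,-\rangle$, is then $A$-linear for the twisted left action on $\DD({}^{i}L)$, and injective by non-degeneracy, hence an isomorphism of left $A$-modules by comparing dimensions; this is exactly the statement that $i$ fixes $L$. The second step is to use that a cellular algebra over a field is split: its cell chain forces $A/\rad(A)$ to be a direct product of full matrix algebras over $k$ (K\"onig--Xi), so $\End_A(L)\cong k$ for every simple $L$, and simple $A$-bimodules are accordingly tensor products of simple left with simple right $A$-modules.

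With these two observations in hand, $A$ and $B$ are derived equivalent split $k$-algebras, each carrying an anti-automorphism fixing all simple modules, so Theorem \ref{thm:class with anti-automorphisms}(a) yields $\gldim(A)=\gldim(B)$; and, under the additional hypothesis that both algebras have dominant dimension at least one, Theorem \ref{thm:class with anti-automorphisms}(b) yields $\domdim(A)=\domdim(B)$. The only point requiring a little care is the verification that the cellular anti-involution fixes simples in the precise twisted-dual sense demanded by Theorem \ref{thm:class with anti-automorphisms}, but this is just the familiar self-duality of the simple modules of a cellular algebra under its contravariant duality, so no genuine obstacle arises.
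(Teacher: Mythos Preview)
Your proof is correct and follows the same approach as the paper: both deduce the corollary directly from Theorem \ref{thm:class with anti-automorphisms} by observing that cellular algebras are split and carry an anti-automorphism fixing simples. The paper treats these facts as well-known and does not spell them out, whereas you supply the verification via the contravariant form and the K\"onig--Xi result; this added detail is welcome but not a different argument.
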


In the next Section, it will be shown how to use this result to compute
homological dimensions of blocks of quantised Schur algebras.

\section{Homological dimensions of $q$-Schur algebras and their blocks}
\label{sec:two dimensions for q-schur algebras}
  Global dimensions of classical and quantised Schur algebras $S(n,r)$ (with $n \geq r$)
  have been determined by Totaro \cite{T97} and Donkin \cite{D98}. Dominant
  dimensions of these algebras have been obtained more recently in
  \cite{FM,FK11a}. The aim of this section is to determine the
  global and dominant dimensions of all blocks of these algebras, that is, of the
  indecomposable algebra direct summands.

  To recall some basics on $q$-Schur algebras and their blocks (see \cite{D98}
  and the references therein),
  let $k$ be a field of characteristic $0$ or $p$, and $q$
  a non-zero element in $k$. Let $\ell$ be the smallest integer
  such that $1+q+\cdots+q^{\ell-1}=0$, and set $\ell=0$ if no such
  integer exists.
  For a natural number $r$, let $\Sigma_r$ be the symmetric group
  on $r$ letters and let $\H_q(r)$ be the associated Hecke algebra that
  is defined by generators $\{T_1,\ldots, T_{r-1}\}$ and relations
  \begin{align*}
    (T_i+1)(T_i-q)& =0, \qquad &&(1\leq i\leq r-1);\\
    T_iT_j & =T_jT_i, \qquad &&(|i-j|>1);\\
    T_iT_{i+1}T_i& = T_{i+1}T_iT_{i+1},\qquad &&(1\leq i\leq r-2).
  \end{align*}
  For each composition $\lambda=(\lambda_1,\ldots, \lambda_n)$ of $r$, that
  is, a sequence of $n$ non-negative integers summing up to $r$,
  let $\H_q(\Sigma_\lambda)$ be the associated parabolic Hecke algebra
  that is isomorphic to $\H_q(\lambda_1)\otimes_k\cdots \otimes_k \H_q(\lambda_n)$
  as $k$-algebras. Then $\H_q(\Sigma_\lambda)$ naturally can be seen
  as a $k$-subalgebra of $\H_q(r)$. The algebra
  $\H_q(\Sigma_\lambda)$ has a trivial module $k$ with all $T_i$'s acting
  as scalar $q$. Let $M^\lambda$ be the permutation module over $\H_q(r)$
  that is defined as the induced module $\H_q(r)\otimes_{\H_q(\Sigma_\lambda)} k$.
  The $q$-Schur algebra $S_q(n,r)$ is then
  defined to be the endomorphism ring $\End_{\H_q(r)}(\bigoplus_{\lambda}M^\lambda)$
  where $\lambda$ ranges over all compositions $(\lambda_1,\ldots, \lambda_n)$ of $r$ into $n$ parts, where $n$ is any natural number.
  For each composition $\lambda$ of $r$, there is a unique associated
  composition $\overline{\lambda}$ (called \emph{partition}) obtained by rearranging the entries of $\lambda$
  weakly decreasing; the permutation module $M^{\overline{\lambda}}$ is
  isomorphic to $M^\lambda$ and is a direct sum of the (indecomposable) Young modules
  $Y^\mu$ with multiplicities
  $K_{\overline{\lambda},\mu}$ ($p$-Kostka number) where $\mu$ ranges over all partitions of $r$. These multiplicities are known to satisfy
  $K_{\overline{\lambda},\overline{\lambda}}=1$ and $K_{\overline{\lambda},\mu}=0$ unless $\mu\trianglerighteq\overline{\lambda}$
  in the dominance ordering on partitions.

  When $q$ is not a root of unity, then the $q$-Schur algebra $S_q(n,r)$ is semisimple,
  and thus the global dimensions of $S_q(n,r)$ and its blocks are $0$, and the dominant
  dimensions of $S_q(n,r)$ and its blocks are $\infty$. In the following,
  we always assume that $q$ is a root a unity, and therefore $\ell>0$.
  If $p=0$ and $r = r_{-1}+\ell r_0$ is the $\ell$-adic expansion of $r$, then we set
  $d_{\ell,p}(r) = r_{-1}+r_0$; if $p>0$, $r=r_{-1}+\ell r'$ and $r' = r_0+pr_1+p^2r_2+\cdots$
  are the $\ell$-adic expansion of $r$ and the $p$-adic expansion of $r'$ respectively, then
  we set $d_{\ell, p}(r)=r_{-1}+r_0+r_1+r_2+\cdots$.
  The global dimension of $S_q(n,r)$ for $n\geq r$ in this case has been given
  by Totaro \cite{T97} for $q=1$, that is for the classical Schur algebra, and by Donkin \cite{D98} in general.

  \begin{Theo}[(\cite{D98,T97})]\label{thm:global dimension of Schur algebras}
If $q$ is a root of unity and $n\geq r$, then 
\mbox{$\gldim S_q(n,r)=2(r-d_{\ell, p}(r))$}.
  \end{Theo}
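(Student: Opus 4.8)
The statement is a theorem of Totaro \cite{T97} (for $q=1$) and Donkin \cite{D98} (in general); the plan is to recall the shape of their argument and to indicate where the arithmetic function $d_{\ell,p}$ enters. The starting point is that for $n\geq r$ the global dimension of $S_q(n,r)$ does not depend on $n$: the module category is that of (quantum) polynomial representations of degree $r$, a highest weight category whose weight poset is the set of partitions of $r$ ordered by dominance. Since this algebra is quasi-hereditary, it has finite global dimension, so Lemma \ref{lem:characterise global dimension} applies and $\gldim S_q(n,r)$ is the largest $g$ with $\Ext^g_{S_q(n,r)}(\DD(S_q(n,r)),S_q(n,r))\neq 0$. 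Filtering the regular module by Weyl modules $\Delta(\mu)$ and its $k$-dual by dual Weyl modules $\nabla(\lambda)$, and using the vanishing $\Ext^{>0}(\Delta,\nabla)=0$, a short comparison of the two filtrations degree by degree shows that
\[
  \gldim S_q(n,r)=\max\{\, g \mid \Ext^g_{S_q(n,r)}(\nabla(\lambda),\Delta(\mu))\neq 0 \text{ for some partitions } \lambda,\mu \,\}.
\]

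The next step is to compute these $\Ext$-groups by explicit resolutions. The projective objects of the polynomial category are summands of tensor products of symmetric (equivalently divided) powers, and every Weyl module admits a finite resolution by such objects --- the quantum analogue of the Akin--Buchsbaum--Weyman resolution. Iterating the short exact sequences that make up these (quantised Koszul) complexes, one reads off the projective dimension of each $\Delta(\mu)$; the number of steps required is controlled by the carries that occur when the parts of $\mu$ (which sum to $r$) are combined first $\ell$-adically and then $p$-adically, and the maximum of this quantity over all $\mu$ turns out to be exactly $r-d_{\ell,p}(r)$. The factor $2$ then enters because one must also resolve on the costandard side: $S_q(n,r)$ is cellular, hence carries an involutory anti-automorphism fixing simples, which identifies the injective dimension of the ``opposite'' weight with the same number; splicing a projective resolution of a carefully chosen $\Delta(\mu)$ with an injective resolution of a carefully chosen $\nabla(\lambda)$ yields a nonzero element of $\Ext^{2(r-d_{\ell,p}(r))}(\nabla(\lambda),\Delta(\mu))$.

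The main obstacle is precisely this last non-vanishing. The inequality $\gldim S_q(n,r)\leq 2(r-d_{\ell,p}(r))$ comes out of the resolutions fairly directly, but showing that the bound is attained requires choosing the right pair of weights and following a cohomology class all the way to the top degree, which is the technical core of \cite{T97,D98}. One remark on logical economy: one should not try to deduce this theorem from its block refinement (Theorem \ref{thm:global dimension of blocks of Schur algebras}), even though $\gldim S_q(n,r)$ is the maximum of the global dimensions of its blocks and, for $n\geq r$, this maximum is attained on the principal block; the block formula is itself obtained --- via Corollary \ref{cor:cellular} and the Chuang--Rouquier derived equivalences between blocks with the same $\ell$-weight --- from the present theorem, so that route would be circular.
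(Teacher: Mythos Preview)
The paper does not prove this theorem; it is quoted from \cite{T97,D98} and simply cited, with no proof given. Your proposal is therefore not comparable to a ``paper's own proof'' --- there is none --- but is rather a sketch of the Totaro--Donkin argument from the references. As such it is a reasonable outline of where the bound and the function $d_{\ell,p}$ come from, and you correctly warn against the circularity of deducing it from Theorem~\ref{thm:global dimension of blocks of Schur algebras}. For the purposes of this paper, however, a bare citation suffices.
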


  A lower bound for the dominant dimension of $S_q(n,r)$ has been obtained (implicitly)
  by Kleshchev and Nakano \cite{KN01} for $q=1$ and by Donkin
  \cite[Proposition 10.5]{D07} in general. It was shown later in
  \cite{FM,FK11a} that this lower bound is an upper bound, too.

  \begin{Theo}[(\cite{D07,FM,FK11a,KN01})] 
\label{thm:dominant dimension of Schur algebras}
    If $q$ is a root of unity and $n\geq r$, then 
\mbox{$\domdim S_q(n,r)=2(\ell-1)$}.
  \end{Theo}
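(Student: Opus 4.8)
The plan is to reduce $\domdim S_q(n,r)$ to low-degree self-extensions of Young modules over the Hecke algebra, and then to pin down the value by a two-sided estimate whose harder, upper half rests on the derived-invariance results of this paper. Since $n\geq r$, the permutation module $M:=\bigoplus_\lambda M^\lambda$ has the regular module $M^{(1^r)}=\H_q(r)$ as a direct summand, hence is a generator over the symmetric algebra $\H_q(r)$; thus $S_q(n,r)=\End_{\H_q(r)}(M)$ is gendo-symmetric, in particular a Morita algebra by Proposition~\ref{prop:characterise morita algebras}, so $\domdim S_q(n,r)\geq 2$. The associated centraliser subalgebra is $\H_q(r)$ ((quantum) Schur--Weyl duality), and applying M\"uller's characterisation (Proposition~\ref{prop:Muller characterisation}) together with $k$-duality one obtains
\[
  \domdim S_q(n,r)\;=\;1+\min\bigl\{\,i\geq 1\;\bigm|\;\Ext^i_{\H_q(r)}(M,M)\neq 0\,\bigr\},
\]
with the convention $\min\emptyset=\infty$ (the semisimple case $r<\ell$). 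As $M$ is, up to multiplicities, the sum of all Young modules $Y^\mu$ ($\mu\vdash r$), the theorem is equivalent to the statement that the least positive degree carrying a non-zero extension among the $Y^\mu$ equals $2\ell-3$.

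The vanishing half --- $\Ext^i_{\H_q(r)}(M^\lambda,M^\mu)=0$ for $1\leq i\leq 2\ell-4$, which splits blockwise --- is the content of the cohomological computations of Kleshchev--Nakano \cite{KN01} (for $q=1$) and Donkin \cite{D07} (in general). For the non-vanishing in degree $2\ell-3$, decompose $S_q(n,r)=\prod_\beta\beta$ into blocks, so $\domdim S_q(n,r)=\min_\beta\domdim\beta$ and each $\beta$ is again gendo-symmetric over the corresponding block of $\H_q(r)$, the weight-$0$ blocks being semisimple. For a block $\beta$ of weight $1$ the Hecke block is a Brauer tree algebra whose tree is a line with $\ell-1$ edges, the trivial module $Y^{(r)}$ being an end simple; its minimal projective resolution ``walks'' once along the line and back with period $2(\ell-1)$, and a direct inspection of the syzygies $\Omega^i Y^{(r)}$ gives $\Ext^i(Y^{(r)},Y^{(r)})=0$ for $1\leq i\leq 2\ell-4$ and $\Ext^{2\ell-3}(Y^{(r)},Y^{(r)})\neq 0$, whence $\domdim\beta=2(\ell-1)$. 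For a block of arbitrary weight $w\geq 1$, use that $q$-Schur algebras are cellular: by Corollary~\ref{cor:cellular} and the Chuang--Rouquier derived equivalences \cite{CR08}, $\domdim\beta$ depends only on $w$, so one may replace $\beta$ by a Rouquier (RoCK) block of weight $w$, which is Morita equivalent to a wreath-product-type algebra over a defect-$1$ building block; a K\"unneth analysis of extensions over the wreath product --- the diagonal self-extension in degree $2\ell-3$ surviving the passage to $\Sigma_w$-invariants --- again yields $\domdim\beta=2(\ell-1)$. Thus every block of positive weight has dominant dimension $2(\ell-1)$, and together with the vanishing half this gives $\domdim S_q(n,r)=2(\ell-1)$ for $r\geq\ell$ (while $S_q(n,r)$ is semisimple when $r<\ell$).

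The main obstacle is the passage from weight $1$ to arbitrary weight: the weight-$1$ case is a transparent Brauer-tree computation, but the general case relies on the Morita description of Rouquier blocks and on controlling extensions over the resulting wreath-type algebras, for which Corollary~\ref{cor:cellular} and the Chuang--Rouquier equivalences are indispensable. Minor technical points are the bookkeeping of the M\"uller degree shift (the ``$+1$'' above) and checking that, for $n\geq r$, each Young module $Y^\mu$ is a summand of $M$, so that $\Ext^\bullet_{\H_q(r)}(M,M)$ detects all of them.
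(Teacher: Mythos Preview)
The paper does not prove this statement; it is quoted from the cited literature, and then used as input to prove the block version (Theorem~\ref{thm:dominant dimension of blocks of Schur algebras}). Your proposal reverses this order: you try to prove the block statement first and deduce the global result from it.

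There is a genuine gap. For blocks of weight $w\geq 2$ you pass, via Chuang--Rouquier and Corollary~\ref{cor:cellular}, to a Rouquier (RoCK) block and then invoke its Morita description as a wreath-product-type algebra over a weight-$1$ building block to run a K\"unneth argument. But this wreath-product Morita description of Rouquier blocks is only established when the characteristic is zero or larger than $w$ (cf.\ \cite{CT03} and the paper's own Remark~(1) after Theorem~\ref{thm:dominant dimension of blocks of Schur algebras}); in small characteristic it is open. You cannot fall back on weight~$1$ alone: not every $r\geq\ell$ admits a weight-$1$ block (for $\ell=2$, $r=4$, every partition of $4$ has empty $2$-core, so $S_q(n,4)$ has a single block, of weight $2$). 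The paper's proof of the block theorem avoids this obstacle by passing instead to the \emph{principal} block $\mathbf{B}_{\emptyset,w}$ of $S_q(\ell w,\ell w)$ and exhibiting a characteristic-free non-vanishing $\Ext^{2\ell-3}_{\H_q(\ell w)}(M^{\nu},Y^{(\ell w)})\neq 0$ via Mackey decomposition with $\nu=(\ell,1,\ldots,1)$; substituting that computation for your Rouquier-block step would close the gap.
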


  To determine the global and dominant dimensions of blocks of the
  $q$-Schur algebras, we will use the setup of algebras with a duality.
  Each $q$-Schur algebra has an anti-automorphism
  that fixes all simple modules, and there is a block decomposition
 that is invariant under the involution
  \[
    S_q(n,r)\cong \bigoplus_{(\tau,w)} \mathbf{B}_{\tau,w}
  \]
  where $0\leq w\leq r$ and $\tau$ ranges over all $\ell$-core partitions of $r-w\ell$.
  Moreover, for $m, n\geq r$, the two blocks $\mathbf{B}_{\tau, w}$ of $S_q(n,r)$ and
  $\mathbf{B}_{\tau',w'}$ of $S_q(m,r)$ are derived equivalent if $w=w'$ \cite{CR08}.

 \begin{Theo} \label{thm:global dimension of blocks of Schur algebras}
  If $q$ is a root of unity and $n\geq r$, then the global dimension of
  the block $\mathbf{B}_{\tau,w}$ is equal to
  $2(\ell w-d_{\ell,p}(\ell w))$.
 \end{Theo}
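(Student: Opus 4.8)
The plan is to reduce the computation of $\gldim\mathbf{B}_{\tau,w}$ to the already-known global dimension of a suitable $q$-Schur algebra by exploiting the derived equivalence from \cite{CR08} together with the Second Invariance Theorem (Theorem \ref{thm:class with anti-automorphisms}), or rather its corollary for cellular algebras, Corollary \ref{cor:cellular}. First I would fix the weight $w$ and recall from \cite{CR08} that all blocks $\mathbf{B}_{\tau,w}$ of weight $w$ (across varying $\tau$, and varying $n,m \geq r$) are derived equivalent to each other. Since each $q$-Schur algebra $S_q(n,r)$ is a cellular algebra (it has a duality fixing all simple modules, as recalled just before the statement), every block $\mathbf{B}_{\tau,w}$ inherits such an anti-automorphism, hence is cellular. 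Therefore, by Corollary \ref{cor:cellular}, all blocks of a fixed weight $w$ have the \emph{same} global dimension; call it $g(w)$, a function of $w$ (and of the fixed parameters $\ell$, $p$) alone.

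Next I would identify $g(w)$ with the global dimension of a specific small $q$-Schur algebra. The natural candidate is to take $r' := \ell w$ and $n$ large, say $n \geq r' = \ell w$, and observe that $S_q(\ell w, \ell w)$ (with the same $q$, hence same $\ell$ and $p$) has a block of weight $w$ whose $\ell$-core is the empty partition — indeed the principal block, since $r' - w\ell = 0$. So $g(w) = \gldim \mathbf{B}_{\emptyset, w}$ where this block is a block of $S_q(\ell w, \ell w)$. The key remaining point is then to show that for $r' = \ell w$, the \emph{principal} block $\mathbf{B}_{\emptyset,w}$ of $S_q(n,r')$ already realises the full global dimension of $S_q(n,r')$, i.e. $\gldim \mathbf{B}_{\emptyset,w} = \gldim S_q(n,\ell w)$. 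Granting this, Theorem \ref{thm:global dimension of Schur algebras} gives $\gldim S_q(n,\ell w) = 2(\ell w - d_{\ell,p}(\ell w))$, and we are done: $g(w) = 2(\ell w - d_{\ell,p}(\ell w))$, which is exactly the claimed formula for every block $\mathbf{B}_{\tau,w}$.

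The step I expect to be the main obstacle is precisely establishing that the global dimension of $S_q(n,\ell w)$ is attained on its principal block. One route: in the decomposition $S_q(n,\ell w)\cong\bigoplus_{(\tau,v)}\mathbf{B}_{\tau,v}$, the global dimension of the whole algebra is the maximum of the global dimensions of the blocks, so $\gldim S_q(n,\ell w)=\max_v g(v)$ where $v$ runs over weights occurring for $r'=\ell w$, i.e. $0\le v\le w$ with $\ell v \le \ell w$ an appropriate value; here one uses that $g$ is well-defined independently of $\tau$ and of $n$, as established above. Then it suffices to show $g$ is monotone: $g(v)\le g(w)$ for $v\le w$. Monotonicity should follow from Totaro--Donkin's formula once we know $g(v)=\gldim(\text{weight-}v\text{ block of }S_q(m,\ell v))=2(\ell v - d_{\ell,p}(\ell v))$ by the same argument run at $r''=\ell v$ — but this is circular unless bootstrapped. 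The clean way to break the circularity is induction on $w$: assume the formula $g(v)=2(\ell v - d_{\ell,p}(\ell v))$ for all $v<w$; then $\gldim S_q(n,\ell w)=\max\bigl(g(w),\max_{v<w}2(\ell v-d_{\ell,p}(\ell v))\bigr)$; one checks from the explicit $\ell$-adic/$p$-adic formula that $v\mapsto 2(\ell v-d_{\ell,p}(\ell v))$ is nondecreasing, so the inner maximum is $2(\ell(w-1)-d_{\ell,p}(\ell(w-1)))\le 2(\ell w - d_{\ell,p}(\ell w))$ (again a combinatorial check on carries), whence $\gldim S_q(n,\ell w)=\max(g(w),\,\le 2(\ell w-d_{\ell,p}(\ell w)))$; combined with $g(w)\le \gldim S_q(n,\ell w)=2(\ell w-d_{\ell,p}(\ell w))$ from Theorem \ref{thm:global dimension of Schur algebras}, and with the reverse inequality $g(w)\ge$ that value obtained by a lower-bound argument (e.g. from Theorem \ref{thm:global dimension of Schur algebras} for a Schur algebra whose unique nontrivial-weight block has $\ell$-core empty, such as taking $r=\ell w$ with $n\ge r$ chosen so that only weights $0$ and $w$ occur — or more robustly by exhibiting a nonvanishing $\Ext^{2(\ell w - d_{\ell,p}(\ell w))}$ inside the principal block directly from the known module-theoretic description), one concludes $g(w)=2(\ell w-d_{\ell,p}(\ell w))$, completing the induction.
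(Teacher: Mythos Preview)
Your approach is essentially identical to the paper's: reduce to $\mathbf{B}_{\emptyset,w}$ via Chuang--Rouquier plus the Second Invariance Theorem, then induct on $w$ using the Totaro--Donkin formula for $S_q(\ell w,\ell w)$. The one place where you make life harder than necessary is the closing step of the induction. You observe only that $v\mapsto 2(\ell v-d_{\ell,p}(\ell v))$ is \emph{nondecreasing}, which leaves open the possibility that the maximum over the lower-weight blocks already equals $2(\ell w-d_{\ell,p}(\ell w))$; you then try to rescue this with a separate lower-bound argument for $g(w)$, which you rightly flag as awkward.

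The paper avoids this entirely by checking \emph{strict} monotonicity: writing $g(s)=2(\ell s-d_{\ell,p}(\ell s))$, one has
\[
g(s+1)=g(s)+2\bigl(\ell+d_{\ell,p}(\ell s)-d_{\ell,p}(\ell s+\ell)\bigr)\geq g(s)+2(\ell-1)>g(s),
\]
since $\ell\geq 2$ when $q$ is a root of unity. With strict monotonicity in hand, the induction closes immediately: by hypothesis every other block of $S_q(\ell w,\ell w)$ has global dimension $g(w')<g(w)$, so the known value $\gldim S_q(\ell w,\ell w)=g(w)$ must be attained on $\mathbf{B}_{\emptyset,w}$, and no extra lower bound is needed. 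Once you tighten ``nondecreasing'' to ``strictly increasing'' (the combinatorial check is the one-line inequality above, using that $d_{\ell,p}$ increases by at most $1$ when the argument increases by $\ell$), your proof coincides with the paper's.
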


 \begin{proof}
  Chuang and Rouquier have shown in \cite{CR08}, that $\mathbf{B}_{\tau, w}$ and $\mathbf{B}_{\emptyset, w}$ are derived equivalent. As all block
 algebras of $q$-Schur algebras have anti-automorphisms fixing all simple modules, Theorem \ref{thm:class with anti-automorphisms} can be applied and we get
 $\gldim\mathbf{B}_{\tau, w} = \gldim\mathbf{B}_{\emptyset, w}$.

 For each natural number $s$, set $g(s) = 2(\ell s- d_{\ell,p}(\ell s))$. Then
 $g(s+1) =$ \mbox{$2(\ell s+ \ell - d_{\ell, p}(\ell s+ \ell))$} $=
g(s)+2(\ell+d_{\ell,p}(\ell s)-d_{\ell,p}(\ell s+ \ell))\geq
 g(s)+2(\ell-1)$. In particular, $g(s)>g(s')$ if $s>s'$.

 Now we are going to compute the global dimension of $\mathbf{B}_{\tau,w}$
by induction on $w$; we have to show that it equals $g(w)$.
 If $w=0$, then the block algebra $\mathbf{B}_{\tau,w}$ is semisimple
(\cite{D98}),
 and thus $\gldim \mathbf{B}_{\tau,w}=0=g(w)$. Now we assume that $w>0$.
 Note that the $q$-Schur algebra $S_q(\ell w, \ell w)$ is of global dimension
 $g(w)$ by Theorem \ref{thm:global dimension of Schur algebras}, and has a block
 subalgebra $\mathbf{B}_{\emptyset,w}$. It follows that $\gldim \mathbf{B}_{\emptyset,w}=g(w)$
 since all other block subalgebras of $S_q(\ell w, \ell w)$ are
 $\mathbf{B}_{\tau,w'}$ with $w'<w$, and thus $\gldim \mathbf{B}_{\tau,w'}
= g(w') < g(w)= \gldim \mathbf{B}_{\emptyset, w}$
 by induction.
 \end{proof}

 In terms of the cover theory introduced by Rouquier \cite{Ro08},
 the $q$-Schur algebra $S_q(n,r)$ is a quasi-hereditary cover of the Hecke algebra
 $\H_q(r)$ of covering degree $(\ell-1)$ 
by Theorem \ref{thm:dominant dimension of Schur algebras} and \cite{FK11a},
 that is, $S_q(n,r)$ is a $(\ell-1)$-cover, 
but not an $\ell$-cover of $\H_q(r)$.
 The following result implies a particular property of the cover;
each block of $S_q(n,r)$
 is a quasi-hereditary cover of the corresponding block of $\H_q(r)$, of the
same dominant dimension. This property may be formulated as saying that the
covering is uniform of covering degree $\ell-1$.

 \begin{Theo} \label{thm:dominant dimension of blocks of Schur algebras}
 If $q$ is a root of unity and $n\geq r$, then the dominant dimension of 
$\mathbf{B}_{\tau, w}$ satisfies
 $$
 \domdim \mathbf{B}_{\tau, w} =
 \begin{cases}
 \infty & {\mbox{when} \,}  w=0 ; \\
 2(\ell-1) & {\mbox{when} \,} w \neq 0.
 \end{cases}$$
 \end{Theo}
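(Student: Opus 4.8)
The plan is to reduce the computation of $\domdim \mathbf{B}_{\tau,w}$ to the case of the principal block $\mathbf{B}_{\emptyset,w}$ using the Second Invariance Theorem, and then to identify $\domdim \mathbf{B}_{\emptyset,w}$ with the already-known dominant dimension of a suitable $q$-Schur algebra. First I would dispose of the case $w=0$: by \cite{D98} the block $\mathbf{B}_{\tau,0}$ is semisimple, hence self-injective with a faithful projective-injective module (namely the whole algebra), so its dominant dimension is $\infty$. For $w\neq 0$, the argument splits into two parts.

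\textbf{Reduction to the principal block.} By \cite{CR08}, $\mathbf{B}_{\tau,w}$ and $\mathbf{B}_{\emptyset,w}$ are derived equivalent. Since each block of a $q$-Schur algebra inherits from $S_q(n,r)$ an anti-automorphism fixing all simple modules, both blocks are split $k$-algebras with such an anti-automorphism (split because $k$ is arbitrary but $q$-Schur algebras are split over their defining field — or one passes to the algebraic closure, which does not change dominant dimension). To apply Theorem~\ref{thm:class with anti-automorphisms}(b) I must check that both blocks have dominant dimension at least one. This is where the bulk of the work lies, and it is the main obstacle I anticipate: $q$-Schur algebras are gendo-symmetric, hence Morita algebras, so by Proposition~\ref{prop:characterise morita algebras} they have $\nu$-dominant dimension at least two; but a block subalgebra of a Morita algebra need not itself be a Morita algebra a priori. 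I would argue as follows: the minimal faithful projective-injective module of $S_q(n,r)$ decomposes along the block decomposition, and for each block $\mathbf{B}_{\tau,w}$ the corresponding summand is a faithful projective-injective $\mathbf{B}_{\tau,w}$-module (faithfulness is inherited because a module faithful over the whole algebra restricts to something faithful over each block summand, after projecting). Moreover $\domdim S_q(n,r)=2(\ell-1)\geq 2$ by Theorem~\ref{thm:dominant dimension of Schur algebras} (using $\ell>0$ since $q$ is a root of unity), and a minimal injective resolution of $S_q(n,r)$ as a bimodule, or just as a left module, decomposes into block components; the first $2(\ell-1)$ terms being projective-injective forces the same for each block, so $\domdim \mathbf{B}_{\tau,w}\geq 2(\ell-1)\geq 2$, in particular $\geq 1$. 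Hence Theorem~\ref{thm:class with anti-automorphisms}(b) gives $\domdim \mathbf{B}_{\tau,w}=\domdim \mathbf{B}_{\emptyset,w}$, and it remains to compute the latter.

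\textbf{Computing the dominant dimension of the principal block.} The strategy mirrors the global-dimension proof in Theorem~\ref{thm:global dimension of blocks of Schur algebras}: the principal block $\mathbf{B}_{\emptyset,w}$ occurs as a block direct summand of the $q$-Schur algebra $S_q(\ell w,\ell w)$, whose dominant dimension is $2(\ell-1)$ by Theorem~\ref{thm:dominant dimension of Schur algebras}. Dominant dimension of a direct product of algebras is the minimum of the dominant dimensions of the factors, so $\domdim S_q(\ell w,\ell w)=\min_{(\tau',w')}\domdim\mathbf{B}_{\tau',w'}$ where the minimum runs over all blocks of $S_q(\ell w,\ell w)$, i.e. over $0\leq w'\leq w$. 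The blocks with $w'=0$ are semisimple with dominant dimension $\infty$, so they do not affect the minimum (when $w\geq 1$ there is at least one block with $w'\geq 1$, e.g. $\mathbf{B}_{\emptyset,w}$ itself). By the reduction step already carried out, $\domdim\mathbf{B}_{\tau',w'}=\domdim\mathbf{B}_{\emptyset,w'}$ for each $w'\geq 1$; so I would prove by induction on $w\geq 1$ that $\domdim\mathbf{B}_{\emptyset,w}=2(\ell-1)$. The base case and inductive step both come out of the identity $2(\ell-1)=\domdim S_q(\ell w,\ell w)=\min\{\domdim\mathbf{B}_{\emptyset,w'} : 1\leq w'\leq w\}$: if all $\domdim\mathbf{B}_{\emptyset,w'}=2(\ell-1)$ for $w'<w$ by induction, the identity forces $\domdim\mathbf{B}_{\emptyset,w}=2(\ell-1)$ as well, since the minimum of the values $2(\ell-1)$ (for $w'<w$) and $\domdim\mathbf{B}_{\emptyset,w}$ equals $2(\ell-1)$, and this value is attained already by a block with $w'\geq 1$. (For $w=1$ the set $\{1,\dots,w\}=\{1\}$ and the identity reads $\domdim\mathbf{B}_{\emptyset,1}=2(\ell-1)$ directly.) Combining with the $w=0$ case gives the stated formula. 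The one point requiring care throughout — and the reason I flag it as the main obstacle — is the bookkeeping that lets one pass dominant dimension between $S_q(n,r)$ and its block summands, and the verification that each block genuinely has a faithful projective-injective module so that the hypotheses of Theorem~\ref{thm:class with anti-automorphisms} are met; once that is in place, everything else is the same inductive shuffle as in the global-dimension argument.
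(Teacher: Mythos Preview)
Your reduction step is fine and matches the paper: the lower bound $\domdim \mathbf{B}_{\tau,w}\geq 2(\ell-1)$ follows from the block decomposition of a minimal injective resolution of $S_q(n,r)$, and then Chuang--Rouquier plus Theorem~\ref{thm:class with anti-automorphisms}(b) reduce everything to $\mathbf{B}_{\emptyset,w}$.

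The gap is in your computation of $\domdim\mathbf{B}_{\emptyset,w}$ for $w\geq 2$. Dominant dimension of a direct product is the \emph{minimum} of the factors, not the maximum, so the inductive mechanism that worked for global dimension breaks down here. Concretely: once you know $\domdim\mathbf{B}_{\emptyset,w'}=2(\ell-1)$ for all $1\leq w'<w$, the identity
\[
2(\ell-1)=\min\bigl\{\domdim\mathbf{B}_{\emptyset,w'}:1\leq w'\leq w\bigr\}
\]
only says $\min\{2(\ell-1),\domdim\mathbf{B}_{\emptyset,w}\}=2(\ell-1)$, i.e.\ $\domdim\mathbf{B}_{\emptyset,w}\geq 2(\ell-1)$ --- which you already knew. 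Nothing prevents $\domdim\mathbf{B}_{\emptyset,w}$ from being strictly larger. (Your base case $w=1$ is correct precisely because there the minimum is taken over a single term; and it can happen for small $w$ that $S_q(\ell w,\ell w)$ has no blocks of intermediate weight, but this is not true in general --- e.g.\ for $\ell=2$, $w=4$ there is a block of weight~$1$.)

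The paper supplies the missing upper bound directly: using M\"uller's criterion (Proposition~\ref{prop:Muller characterisation}), it suffices to exhibit Young modules $Y^\lambda,Y^\mu$ in the principal block $\mathbf{b}_w$ of $\H_q(\ell w)$ with $\Ext^{2\ell-3}_{\mathbf{b}_w}(Y^\lambda,Y^\mu)\neq 0$. Taking $\mu=(\ell w)$ (so $Y^\mu=k$) and $\nu=(\ell,1,\ldots,1)$, adjointness gives $\Ext^{2\ell-3}_{\H_q(\ell w)}(M^\nu,k)\cong\Ext^{2\ell-3}_{\H_q(\Sigma_\nu)}(k,k)\cong\Ext^{2\ell-3}_{\H_q(\ell)}(k,k)\neq 0$, and some summand $Y^\lambda$ of $M^\nu$ then realises the required non-vanishing. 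This explicit cohomological input is what your argument is missing.
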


 \begin{proof}
 If $w=0$, then $\mathbf{B}_{\tau, w}$ is semisimple and has dominant dimension infinity. Now we assume that $w>0$. Since $q$ is a root of unity, it follows that $\ell\geq 2$, and thus by Theorem \ref{thm:dominant dimension of Schur algebras}
 \begin{align} \label{eqn:lower bound for dominant dimension}
 \domdim \mathbf{B}_{\tau, w}\geq \domdim S_q(n, r)= 2(\ell-1)\geq 2.
 \end{align}
Note that  all block algebras have anti-automorphisms fixing all simple modules.  By \cite{CR08}, the block subalgebra $\mathbf{B}_{\tau,w}$ of $S_q(n,r)$ and
 the principal block subalgebra $\mathbf{B}_{\emptyset,w}$ of $S_q(\ell w, \ell w)$ are
 derived equivalent, and thus have the same dominant dimension by Theorem \ref{thm:class with anti-automorphisms}.
 Therefore we only need to show $\domdim \mathbf{B}_{\emptyset,w}=2(\ell-1)$.

Let $e$ be an idempotent in $\mathbf{B}_{\emptyset,w}$ such that $\mathbf{B}_{\emptyset,w}e$
 is a minimal faithful $\mathbf{B}_{\emptyset,w}$-module. Then $\mathbf{b}_{w}=e\mathbf{B}_{\emptyset,w}e$
 is a block subalgebra of $\H_q(\ell w)$ and the $\mathbf{b}_w$-module $e\mathbf{B}_{\emptyset, w}$ is isomorphic to
 a direct sum of those Young modules $Y^\mu$ that belong to the block $\mathbf{b}_w$ (see \cite{D98}).
 By Proposition \ref{prop:Muller characterisation} and the inequality (\ref{eqn:lower bound for dominant dimension}),
 to finish the proof, it suffices to show:

\smallskip
 {\it Claim. There exist Young modules $Y^\lambda$ and $Y^\mu$ of $\H_q(\ell w)$ that belong to $\mathbf{b}_w$
 such that $\Ext^{2(\ell-1)-1}_{\mathbf{b}_w}(Y^\lambda,Y^\mu)\neq 0$. }

 \smallskip
 {\it Proof.} Set $\nu = (\ell, 1,\ldots, 1)$ and $\mu=(\ell w)$, which are two partitions of $\ell w$.
 Then the Young module $Y^\mu$ belongs to the block $\mathbf{b}_w$ and by definition $Y^\mu = M^\mu = k$.
 As a result, by Mackey's decomposition theorem
 \[
    \Ext^{2(\ell-1)-1}_{\H_q(r)}(M^\nu, Y^\mu) \cong \Ext^{2(\ell-1)-1}_{\H_q(r)}(\H_q(r)\otimes_{\H_q(\Sigma_\nu)} k, k)
    \cong
    \Ext^{2(\ell-1)-1}_{\H_q(\Sigma_\nu)}(k,k)\neq 0.
\]
  Here, the first isomorphism uses the definition
of permutation modules and the second one uses adjointness; non-vanishing
of the third extension space follows by identifying $\Sigma_\nu$ with
$\Sigma_{\ell}$ and then using the known cohomology of $\H_q(\Sigma_{\ell})$. So there is a direct summand $Y^\lambda$ of $M^\nu$ such that
 $\Ext^{2(\ell-1)-1}_{\H_q(r)}(Y^\lambda, Y^\mu)\neq 0$. In this case, the Young module
 $Y^\lambda$ must belong to the block $\mathbf{b}_w$, too. This proves the
claim and the Theorem.
 \end{proof}

 \noindent \textit{Remark.}
 (1) When $k$ has characteristic zero or bigger than the weight $w$,
 the dominant dimension of the blocks $\mathbf{B}_{\tau, w}$
 have been determined in \cite{FM} by using Chuang and Tan's complete
 description of the corresponding Rouquier blocks \cite{CT03}.

 (2) All blocks $\mathbf{B}_{\tau,2}$ of the $q$-Schur algebra $S_q(n,r)$
 are also stably equivalent of Morita type, and hence have the same global and
 dominant dimensions, as well as the same representation dimension. Indeed,
 by carefully examining the tilting complexes constructed by
 Chuang and Rouquier \cite{CR08},
 we see that the induced derived equivalences are almost $\nu$-stable.
 However, the derived equivalences between general blocks constructed by
 Chuang and Rouquier \cite{CR08}
 are not almost $\nu$-stable. For instance,  when $p=2$,  the group algebras
 of $S_6$ and of $S_7$, respectively, each have a unique block of $p$-weight
 $3$. These two blocks correspond to each other under the reflection $s_0$ of
 the Weyl group of the affine Kac-Moody algebra $\widehat{sl_2}$. The derived
 equivalences constructed in \cite{CR08}, relating these two blocks, are not
 almost $\nu$-stable.

\vspace{-.5cm}

\affiliationone{
   Ming Fang

   \smallskip
   Institute of Mathematics\\
   Chinese Academy of Sciences\\
   100190 Beijing, P.R.China
   \email{fming@amss.ac.cn}}   

\affiliationone{
   Wei Hu

   \smallskip
   School of Mathematical Sciences\\
   Beijing Normal University\\
   100875 Beijing, P.R.China
   \email{huwei@bnu.edu.cn}}
   
\affiliationone{
   Steffen Koenig

   \smallskip
   Institute of Algebra and Number Theory\\
  University of Stuttgart\\
   Pfaffenwaldring 57\\
   70569 Stuttgart, Germany
   \email{skoenig@mathematik.uni-stuttgart.de}}

\begin{thebibliography}{99}

  \bibitem{AI} T. Aihara and O. Iyama,
  Silting mutation in triangulated categories,
  \textit{J. Lond. Math. Soc. (2)} \textbf{85} (2012), 633-668.

  \bibitem{AR04} S. Al-Nofayee and J. Rickard,
  Rigidity of tilting complexes and derived equivalences for
  self-injective algebras, \textit{preprint}.

  \bibitem{AF} F.W. Anderson and K.R. Fuller,
    \textit{Rings and categories of modules},
    Graduate Texts in Mathematics \textbf{13},
  Springer-Verlag, New York, second edition, 1992.

\bibitem{Asa} H. Asashiba, The derived equivalence classification of
  representation--finite selfinjective algebras.
  \textit{J. Algebra} \textbf{214} (1999), 182--221.

\bibitem{ASS} I. Assem, D. Simson and A. Skowronski,
  \textit{Elements of the representation theory of associative algebras.} 
  Vol. 1. Techniques of representation theory. London Mathematical Society
  Student Texts, \textbf{65}. Cambridge University Press, Cambridge, 2006.
  
  \bibitem{AS} M. Auslander and S.O. Smal{\o},
  Preprojective modules over artin algebras,
  \textit{J. Algebra} \textbf{66} (1980), 61-122.

  \bibitem{ChM}A. Chan and R. Marczinzik, On gendo-Brauer tree algebras. preprint.

  \bibitem{CR08} J. Chuang and R. Rouquier,
  Derived equivalences for symmetric groups and $\mathfrak{sl}_2$-categorification,
  \textit{Ann. Math.} \textbf{167} (2008), 245-298.

  \bibitem{CT03} J. Chuang and K.M. Tan,
  Filtrations in Rouquier blocks of symmetric groups and Schur algebras,
  \textit{Proc. London Math. Soc.} \textbf{86} (2003), 685-706.

  \bibitem{CPS96} E. Cline, B. Parshall and L. Scott,
  Stratifying endomorphism algebras,
  \textit{Mem. Amer. Math. Soc.} \textbf{124} (1996).

  \bibitem{D98} S. Donkin,
  \textit{The $q$-Schur algebra},
  London Mathematical Society Lecture Note Series \textbf{253},
  Cambridge University Press, Cambridge, 1998.

  \bibitem{D07} S. Donkin,
  Tilting modules for algebraic groups and finite dimensional
  algebras, in \textit{Handbook of Tilting Theory},
  London Mathematical Society Lecture Note Series \textbf{332}, Cambridge
  University Press, Cambridge, 2007.

  \bibitem{DugasMV} A. Dugas and R. Martinez-Villa,
  A note on stable equivalences of Morita type. \textit{J. Pure Appl.
    Algebra} \textbf{208} (2007), 421-433.

  \bibitem{FKY} M. Fang, O. Kerner and K. Yamagata,
  Canonical bimodules and dominant dimension, to appear in
 \textit{Trans. Amer. Math. Soc.}

  \bibitem{FK11a} M. Fang and S. Koenig,
  Schur functors and dominant dimension,
  \textit{Trans. Amer. Math. Soc.} \textbf{363} (2011), 1555-1576.
 
  \bibitem{FK11b} M. Fang and S. Koenig,
  Endomorphism algebras of generators over symmetric algebras,
  \textit{J. Algebra} \textbf{332} (2011), 428-433.
  
  \bibitem{FK15} M. Fang and S. Koenig,
  Gendo-symmetric algebras, canonical comultiplication, Hochschild cocomplex
  and dominant dimension,
 \textit{Trans. Amer. Math. Soc.} \textbf{368} (2016) 5037-5055.

\bibitem{FM} M. Fang and H. Miyachi,
  Dominant dimension, Hochschild cohomology and derived equivalence,
  \textit{preprint}.

 \bibitem{GR} P. Gabriel and A.V. Roiter,
  \textit{Representations of finite dimensional algebras,}
  Encyclopedia Math. Sci. Vol \textbf{73}, Springer-Verlag, 1992.
Chapter 12, Derivation and Tilting, by B. Keller.

  \bibitem{Ha88} D. Happel,
  \textit{Triangulated categories in the representation theory of finite dimensional algebras},
  London Math. Soc. Lecture Note Series \textbf{119} (1988), Cambridge University Press.

  \bibitem{Hu12} W. Hu
  On iterated almost $\nu$-stable derived equivalences,
  \textit{Comm. Algebra} \textbf{40}(2012), 3920-3932.

  \bibitem{HuKXi13} W. Hu, S. Koenig, and C.C. Xi,
  Derived equivalences from cohomological approximations and mutations
  of $\Phi$-Yoneda algebras, \textit{Proc. Roy. Soc. Edinburgh Sect. A}
  \textbf{143} (2013), 589-629.

  \bibitem{HuXi10} W. Hu and C.C. Xi,
  Derived equivalences and stable equivalences of Morita type, \textrm{I}.
  \textit{Nagoya Math. J.} \textbf{200} (2010),107-152.

  \bibitem{HuXi11} W. Hu and C.C. Xi,
  $\mathscr{D}$-split sequences and derived equivalences,
  \textit{Adv. Math.} \textbf{227} (2011), 292-318.


  \bibitem{HuXi13} W. Hu and C.C. Xi,
  Derived equivalences for $\Phi$-Auslander-Yoneda algebras,
  \textit{Trans. Amer. Math. Soc.} \textbf{365} (2013), 5681-5711.

  \bibitem{HuXi} W. Hu and C.C. Xi,
  Derived equivalences and stable equivalences of Morita type, \textrm{II},
  arXiv:1412.7301.

\bibitem{Ke94}
B.~Keller, Deriving DG categories,  \emph{Ann. Sci.
  {\'E}cole Norm. Sup.} \textbf{27} (1994), 63--102.

  \bibitem{Ke96} B. Keller,
  {Derived categories and their uses}, in
  {\it Handbook of algebra}, Amsterdam, North Holland, 1996.

  \bibitem{KY13} O. Kerner and K. Yamagata,
  Morita algebras, \textit{J. Algebra} \textbf{382} (2013), 185-202.

  \bibitem{KN01} A.S. Kleshchev and D.K. Nakano,
  On comparing the cohomology of general linear groups and symmetric groups,
  \textit{Pacific J. Math.} \textbf{201} (2001), 339-355.

\bibitem{KSX} S. Koenig, I.H. Slung{\aa}rd and C.C. Xi,
  Double centralizer properties, dominant dimension, and tilting modules,
  \textit{J. Algebra} \textbf{240} (2001), 393--412.

  \bibitem{LXi05} Y.M. Liu and C.C. Xi,
  Constructions of stable equivalences of Morita type for finite dimensional algebras \textrm{II},
  \textit{Math. Z.} \textbf{251} (2005), 21-39.

\bibitem{MV1} R. Martinez-Villa, The stable equivalence for algebras
of finite representation type,
\textit{Commun. Alg.} \textbf{13} (1985), 991-1018.

\bibitem{MV2} R. Martinez.Villa,
Properties that are left invariant under stable equivalences,
\textit{Commun. Alg.} \textbf{18}, 4141-4169.

  \bibitem{MO04} V. Mazorchuk and S. Ovsienko,
  Finitistic dimension of properly stratified algebras,
  \textit{Adv. Math.} \textbf{186} (2004), 251-265.


  \bibitem{Mi03} J. Miyachi,
  Recollement and tilting complexes,
  \textit{J. Pure Appl. Algebra} \textbf{183} (2003), 245-273.


  \bibitem{Mu68} B. M\"{u}ller,
  The classification of algebras by dominant dimension,
  \textit{Canad. J. Math.} \textbf{20} (1968), 398-409.


  \bibitem{Mo58} K. Morita,
  Duality for modules and its applications to the theory of rings with minimum condition,
  \textit{Sci. Rep. Tokyo Kyoiku Daigaku Sec. A} \textbf{6} (1958), 83-142.


  \bibitem{R89} J. Rickard,
  Morita theory for derived categories,
  \textit{J. London Math. Soc.} \textbf{39} (1989), 436-456.

\bibitem{Rickardderivedstable} J. Rickard,
Derived categories and stable equivalence,
\textit{J. Pure Appl.Alg.} \textbf{61} (1989), 303-317.

  \bibitem{R91} J. Rickard,
  Derived equivalences as derived functors,
  \textit{J. London Math. Soc.} \textbf{43} (1991), 37-48.

  \bibitem{Ro08} R. Rouquier,
  $q$-Schur algebras and complex reflection groups,
  \textit{Moscow Math. J.} \textbf{8} (2008), 119-158.


  \bibitem{Sp85} N. Spaltenstein,
  Resolutions of unbounded complexes,
  \textit{Comp. Math.} \textbf{65} (1988), 121-154.

  \bibitem{T97} B. Totaro,
  Projective resolutions of representations of $GL(n)$,
  \textit{J. reine angew. Maht.} \textbf{482} (1997), 1-13.

\bibitem{Xi} C.C. Xi,
  Quasi-hereditary algebras with a duality.
  \textit{J. Reine Angew. Math.} \textbf{449} (1994), 201-215.

  \bibitem{Y96} K. Yamagata,
  Frobenius algebras, in \textit{Handbook of algebras},
  North-Holland, Amsterdam 1996, 841-887.


\end{thebibliography}
\end{document}